\documentclass{amsart}

\title{A higher Gross-Zagier type formula for moduli of shtukas at deeper level}
\author{Patrick Bieker}

\address{Fakultät für Mathematik, Universität Bielefeld, Postfach 100 131, 33501 Bielefeld, Germany}
\email{pbieker@math.uni-bielefeld.de}

\usepackage[T1]{fontenc} 
\usepackage[utf8]{inputenc} 

\usepackage[all,cmtip]{xy}

\usepackage{enumerate}
\usepackage{enumitem}
\usepackage[colorlinks=true,hyperindex,linktocpage=true]{hyperref}
\hypersetup{
	colorlinks,
	linkcolor={red!50!black},
	citecolor={red!50!black}, 
	urlcolor={red!50!black}, 
	filecolor={red!50!black} 
}
\usepackage[top=0.9in, left=0.9in, right=0.9in, bottom=0.9in]{geometry}
\usepackage{tikz-cd}

\usepackage{amsmath, amssymb, amsthm}
\usepackage{mathtools}
\usepackage{mathabx}
\usepackage{mathrsfs}
\usepackage{stmaryrd}

\usepackage{todonotes}
\usepackage{comment}
\usepackage{theoremref}

\usepackage{lscape}

\makeatletter
\@addtoreset{equation}{section}
\makeatother

\numberwithin{equation}{section}

\newcommand{\dsq}[1]{\left\llbracket #1 \right\rrbracket}

\newcommand{\se}{\subseteq}

\newcommand{\ph}{\varphi}

\newcommand{\defined}{\hspace{0.1cm}\stackrel{\text{\tiny \rm def}}{=}\hspace{0.1cm}}

\newcommand{\Ac}{\mathcal{A}}

\newcommand{\Dc}{\mathcal{D}}
\newcommand{\Ec}{\mathcal{E}}
\newcommand{\Fc}{\mathcal{F}}
\newcommand{\Gc}{\mathcal{G}}
\newcommand{\Hc}{\mathcal{H}}
\newcommand{\Ic}{\mathcal{I}}
\newcommand{\Jc}{\mathcal{J}}
\newcommand{\Kc}{\mathcal{K}}
\newcommand{\Lc}{\mathcal{L}}

\newcommand{\Oc}{\mathcal{O}}
\newcommand{\Pc}{\mathcal{P}}

\newcommand{\Uc}{\mathcal{U}}

\newcommand{\Wc}{\mathcal{W}}

\newcommand{\Yc}{\mathcal{Y}}
\newcommand{\Zc}{\mathcal{Z}}

\newcommand{\A}{\mathbb{A}}
\newcommand{\C}{\mathbb{C}}
\newcommand{\F}{\mathbb{F}}
\newcommand{\G}{\mathbb{G}}

\newcommand{\Q}{\mathbb{Q}}

\newcommand{\Z}{\mathbb{Z}}

\renewcommand{\P}{\mathbb{P}}

\newcommand{\Fq}{{\F_q}}
\newcommand{\Ga}{\mathbb{G}_a}

\newcommand{\pf}{\mathfrak{p}}

\newcommand{\mr}[1]{\mathrm{#1}}

\DeclareMathOperator{\Inv}{Inv}

\DeclareMathOperator{\Hecke}{Hecke}

\DeclareMathOperator*{\colim}{colim}

\usetikzlibrary{arrows}

\newcommand{\GammaN}{\Gamma_0(\Sigma)}

\newcommand{\BDGrdNnaive}{\Gr^{\underline{\mu}}_{2}(\Sig)}
\newcommand{\BunGnaive}{\Bun_{G}(\Sigma)}
\newcommand{\BunGLnaive}{\Bun_{2}(\Sigma)}
\newcommand{\HeckeGGa}{\Hecke^{\underline{\mu}}_{G}(\Sigma)}
\newcommand{\HeckenGa}{\Hecke^{\underline{\mu}}_{2}(\Sigma)}
\newcommand{\ShtGnaive}{\Sht^{\underline{\mu}}_{G}(\Sigma)}
\newcommand{\ShtSiD}{\Sht_2^{\underline{\mu}}(\Sigma, D_\infty)}
\newcommand{\ShtSiDp}{{\Sht'}_2^{\underline{\mu}}(\Sigma, D_\infty)}
\newcommand{\ShtGSiD}{\Sht_G^{\underline{\mu}}(\Sigma, D_\infty)}
\newcommand{\ShtGSiDp}{{\Sht'}_G^{\underline{\mu}}(\Sigma, D_\infty)}
\newcommand{\ShtGSii}{\Sht^r_G(\Sii)}

\newcommand{\DivSSi}{\sD\mathrm{iv}(X; \Sf; \Si)}
\usepackage{amsthm,amsfonts,amssymb,amsmath,amsxtra}
\usepackage[all]{xy}
\SelectTips{cm}{}
\usepackage{xr-hyper}
\usepackage{verbatim}

\usepackage{tikz}

\usepackage{mathrsfs}

\RequirePackage{xspace}
\RequirePackage{etoolbox}
\RequirePackage{varwidth}
\RequirePackage{enumitem}
\RequirePackage{tensor}
\RequirePackage{mathtools}
\RequirePackage{longtable}
\RequirePackage{multirow}

\newcommand{\sC}{\ensuremath{\mathscr{C}}\xspace}
\newcommand{\sD}{\ensuremath{\mathscr{D}}\xspace}

\newcommand{\sH}{\ensuremath{\mathscr{H}}\xspace}

\newcommand{\sL}{\ensuremath{\mathscr{L}}\xspace}

\newcommand{\fkm}{\ensuremath{\mathfrak{m}}\xspace}

\newcommand{\nat}{{\natural}}

\newcommand{\BA}{\ensuremath{\mathbb {A}}\xspace}

\newcommand{\BC}{\ensuremath{\mathbb {C}}\xspace}

\newcommand{\BJ}{\ensuremath{\mathbb {J}}\xspace}

\newcommand{\CC}{\ensuremath{\mathcal {C}}\xspace}

\newcommand{\CO}{\ensuremath{\mathcal {O}}\xspace}

\newcommand{\CW}{\ensuremath{\mathcal {W}}\xspace}

\newcommand{\cA}{\ensuremath{\mathcal {A}}\xspace}

\newcommand{\cE}{\ensuremath{\mathcal {E}}\xspace}
\newcommand{\cF}{\ensuremath{\mathcal {F}}\xspace}
\newcommand{\cG}{\ensuremath{\mathcal {G}}\xspace}
\newcommand{\cH}{\ensuremath{\mathcal {H}}\xspace}
\newcommand{\cI}{\ensuremath{\mathcal {I}}\xspace}

\newcommand{\cL}{\ensuremath{\mathcal {L}}\xspace}
\newcommand{\cM}{\ensuremath{\mathcal {M}}\xspace}
\newcommand{\cN}{\ensuremath{\mathcal {N}}\xspace}
\newcommand{\cO}{\ensuremath{\mathcal {O}}\xspace}

\newcommand{\cQ}{\ensuremath{\mathcal {Q}}\xspace}

\newcommand{\cW}{\ensuremath{\mathcal {W}}\xspace}

\newcommand{\cY}{\ensuremath{\mathcal {Y}}\xspace}

\newcommand{\Ad}{{\mathrm{Ad}}}

\newcommand{\Ch}{{\mathrm{Ch}}}

\DeclareMathOperator{\coker}{coker}

\newcommand{\cl}{{\mathrm{cl}}}

\DeclareMathOperator{\cusp}{cusp}

\newcommand{\Div}{{\mathrm{Div}}}

\DeclareMathOperator{\Eis}{Eis}
\DeclareMathOperator{\End}{End}

\DeclareMathOperator{\Fr}{Fr}

\newcommand{\GL}{\mathrm{GL}}

\DeclareMathOperator{\Hom}{Hom}

\newcommand{\id}{\ensuremath{\mathrm{id}}\xspace}
\let\Im\relax
\DeclareMathOperator{\Im}{Im}

\newcommand{\inv}{{\mathrm{inv}}}

\DeclareMathOperator{\Jac}{Jac}

\DeclareMathOperator{\Nm}{Nm}

\newcommand{\PGL}{{\mathrm{PGL}}}
\DeclareMathOperator{\Pic}{Pic}

\newcommand{\red}{\ensuremath{\mathrm{red}}\xspace}
\newcommand{\reg}{{\mathrm{reg}}}

\DeclareMathOperator{\Res}{Res}

	\DeclareMathOperator{\Spec}{Spec}

	\DeclareMathOperator{\Supp}{Supp}

	\DeclareMathOperator{\tr}{tr}
	\DeclareMathOperator{\Tr}{Tr}

	\DeclareMathOperator{\vol}{vol}

	\newcommand{\Bun}{{\mathrm{Bun}}}

	\newcommand{\Sht}{{\mathrm{Sht}}}
	
	\newcommand{\Hk}{{\mathrm{Hk}}}

	\newcommand{\Gr}{\mathrm{Gr}}

	\newcommand{\Iw}{\mathrm{Iw}}
	
	\newcommand{\Mat}{\mathrm{Mat}}

	\newcommand{\wt}{\widetilde}

	\newcommand{\ov}{\overline}

	\newcommand{\incl}{\hookrightarrow}
	\newcommand{\lra}{\longrightarrow}

	\renewcommand\CC{\mathbb{C}}

	\newcommand\GG{\mathbb{G}}
	
	\newcommand\II{\mathbb{I}}
	\newcommand\JJ{\mathbb{J}}
	\newcommand\KK{\mathbb{K}}

	\newcommand\OO{\mathbb{O}}
	\newcommand\PP{\mathbb{P}}
	\newcommand\QQ{\mathbb{Q}}

	\newcommand\ZZ{\mathbb{Z}}

	\newcommand\bR{\mathbf{R}}

	\newcommand\frD{\mathfrak{D}}

	\newcommand\frK{\mathfrak{K}}

	\newcommand\frS{\mathfrak{S}}
	\newcommand\frT{\mathfrak{T}}

	\newcommand\frZ{\mathfrak{Z}}

	\renewcommand\a\alpha
	\renewcommand\b\beta
	\newcommand\g\gamma
	\renewcommand\d\delta
	\newcommand\D\Delta
	
	\renewcommand{\k}{\kappa}
	\renewcommand{\th}{\theta}

	\newcommand{\s}{\sigma}
	\newcommand{\Sig}{\Sigma}
	
	\renewcommand\r{\rho}
	\newcommand\io{\iota}

	\newcommand{\z}{\zeta}
	
	\renewcommand{\l}{\lambda}
	
	\newcommand{\om}{\omega}

	\renewcommand\j{\jmath}
	
	\newcommand\xr{\xrightarrow}
	\newcommand{\isom}{\stackrel{\sim}{\to}}
	\newcommand{\surj}{\twoheadrightarrow}

	\newcommand\ha{\frac{1}{2}}

	\newcommand{\trpose[1]}{\leftexp{t}{#1}}
	\newcommand{\Ql}{\QQ_{\ell}}
	\newcommand{\Qlbar}{\overline{\QQ}_\ell}
	\newcommand{\kbar}{\overline{k}}

	\newcommand\un{\underline}
	
	\newcommand\op{\oplus}
	\newcommand\ot{\otimes}
	
	\newcommand{\sss}{\subsubsection}
	
	\newcommand{\coho}[2]{\mathbf{H}^{#1}({#2})}    
	\newcommand{\cohog}[2]{\textup{H}^{#1}({#2})}     
	\newcommand{\cohoc}[2]{\textup{H}_{c}^{#1}({#2})}     
	
	\newcommand{\oll}[1]{\overleftarrow{#1}}
	\newcommand{\orr}[1]{\overrightarrow{#1}}
	\newcommand{\olr}[1]{\overleftrightarrow{#1}}

	\newcommand\AL{\textup{AL}}
	
	\newcommand{\Gm}{\GG_m}

	\newcommand\inst{\textup{inst}}
	
	\newcommand\Sect{\textup{Sect}}

	\newcommand\dm{\diamondsuit}
	
	\newcommand\da{\dagger}
	\newcommand\sh{\sharp}
	
	\newcommand\fl{\flat}

	\newcommand{\sqR}{\sqrt{R}}

	\newcommand{\Si}{\Sigma_{\infty}}
	\newcommand{\Sf}{\Sigma_{f}}
	\newcommand{\Sii}{\Sig; \Sigma_{\infty}}
	\newcommand{\SIw}{\Sig_{\Iw}}
	\newcommand{\frSi}{\frS_{\infty}}
	\newcommand{\Di}{D_{\infty}}
	\newcommand{\mi}{\mu_{\infty}}
	\newcommand{\mf}{\mu_{f}}
	\newcommand{\Sigha}{\lfloor \Sig/2\rfloor}
	\newcommand{\SigHa}{\left\lfloor \frac{\Sig}{2} \right\rfloor}
	
	\newcommand{\bsq}{\square}
	\newcommand\bx{\mathbf{x}}

	\newtheorem{theorem}{Theorem}
	\newtheorem{prop}[theorem]{Proposition}
	\newtheorem{lem}[theorem]{Lemma}
	\newtheorem{lemma}[theorem]{Lemma}
	
	\newtheorem{cor}[theorem]{Corollary}
	\newtheorem{thm}[theorem]{Theorem}

	\theoremstyle{definition}
	\newtheorem{defn}[theorem]{Definition}

	\newtheorem{remark}[theorem]{Remark}

	\newtheorem{proposition}[theorem]{Proposition}
	\newtheorem{corollary}[theorem]{Corollary}

	\newtheorem{definition}[theorem]{Definition}

	\newcommand{\matrixx}[4]
	{\left[ \begin{array}{cc}
			#1 &  #2  \\
			#3 &  #4  \\
		\end{array}\right]}

	\numberwithin{equation}{section}
	\numberwithin{theorem}{section}

	\renewcommand{\to}{%
		\ifbool{@display}{\longrightarrow}{\rightarrow}%
	}
	\let\shortmapsto\mapsto
	\renewcommand{\mapsto}{%
		\ifbool{@display}{\longmapsto}{\shortmapsto}%
	}
	\newlength{\olen}
	\newlength{\ulen}
	\newlength{\xlen}
	\newcommand{\xra}[2][]{%
		\ifbool{@display}%
		{\settowidth{\olen}{$\overset{#2}{\longrightarrow}$}%
			\settowidth{\ulen}{$\underset{#1}{\longrightarrow}$}%
			\settowidth{\xlen}{$\xrightarrow[#1]{#2}$}%
			\ifdimgreater{\olen}{\xlen}%
			{\underset{#1}{\overset{#2}{\longrightarrow}}}%
			{\ifdimgreater{\ulen}{\xlen}%
				{\underset{#1}{\overset{#2}{\longrightarrow}}}
				{\xrightarrow[#1]{#2}}}}%
		{\xrightarrow[#1]{#2}}
	}
	\makeatother
	\newcommand{\xyra}[2][]{%
		\settowidth{\xlen}{$\xrightarrow[#1]{#2}$}%
		\ifbool{@display}%
		{\settowidth{\olen}{$\overset{#2}{\longrightarrow}$}%
			\settowidth{\ulen}{$\underset{#1}{\longrightarrow}$}%
			\ifdimgreater{\olen}{\xlen}%
			{\mathrel{\xymatrix@M=.12ex@C=3.2ex{\ar[r]^-{#2}_-{#1} &}}}%
			{\ifdimgreater{\ulen}{\xlen}%
				{\mathrel{\xymatrix@M=.12ex@C=3.2ex{\ar[r]^-{#2}_-{#1} &}}}
				{\mathrel{\xymatrix@M=.12ex@C=\the\xlen{\ar[r]^-{#2}_-{#1} &}}}}}%
		{\mathrel{\xymatrix@M=.12ex@C=\the\xlen{\ar[r]^-{#2}_-{#1} &}}}%
	}
	\makeatletter
	\newcommand{\xla}[2][]{%
		\ifbool{@display}%
		{\settowidth{\olen}{$\overset{#2}{\longleftarrow}$}%
			\settowidth{\ulen}{$\underset{#1}{\longleftarrow}$}%
			\settowidth{\xlen}{$\xleftarrow[#1]{#2}$}%
			\ifdimgreater{\olen}{\xlen}%
			{\underset{#1}{\overset{#2}{\longleftarrow}}}%
			{\ifdimgreater{\ulen}{\xlen}%
				{\underset{#1}{\overset{#2}{\longleftarrow}}}
				{\xleftarrow[#1]{#2}}}}%
		{\xleftarrow[#1]{#2}}
	}
	\newcommand{\isoarrow}{%
		\ifbool{@display}{\overset{\sim}{\longrightarrow}}{\xrightarrow\sim}%
	}
	\renewcommand{\lra}{%
		\ifbool{@display}{\longleftrightarrow}{\leftrightarrow}%
	}

\thanks{
	The author was supported by the Deutsche Forschungsgemeinschaft (DFG, German Research Foundation) --
	Project-ID 520675682 as well as by the Deutsche Forschungsgemeinschaft (DFG, German Research Foundation) --
	Project-ID 491392403 -- TRR 358.
}

\begin{document}

\begin{abstract}
	We generalize the function field analogue analogue of the Gross-Zagier and Waldspurger formulae due to Yun and Zhang relating the (higher) derivative of an automorphic (base change) $L$-function to intersection numbers of Heegner-Drinfeld cycles on moduli spaces of shtukas to not necessarily square-free level.
	The main new input is the study of the geometry and cohomology of the appropriate ``integral models'' of moduli spaces of shtukas at deeper level.
\end{abstract}

\maketitle

\section{Introduction}

In their seminal works \cite{Yun2017, Yun2019}, Yun and Zhang show (higher order) analogues of the Waldspurger \cite{Waldspurger1985} and Gross-Zagier \cite{Gross1986} formulae in the function field setting.
More precisely, for $G = \PGL_2$ defined over the function field $F$ of a smooth projective and geometrically connected curve $X$ over a finite field $\Fq$ of characteristic $p \neq 2$ the Yun-Zhang formula relates the higher order central derivatives of the base change $L$-function $\sL_{F'/F}(\pi, s)$ of a cuspical automorphic representation $\pi$ of $G(\A_F)$ with square-free level $\Sigma$ on the one hand with the self-intersection number of a Heegner-Drinfeld cycle (an analogue of Heegner points on modular curves) on a moduli space of $G$-shtukas with level $\Gamma_0(\Sig)$ on the other hand. 
There is hope to use these higher Gross-Zagier type formulas to gain new insight into the Birch and Swinnerton-Dyer conjecture in the function field setting even beyond the rank 1 case, compare the discussion in \cite[Section 1.3]{Yun2019} for details.

The goal of the present work is to extend their results to  not necessarily square-free level. The main new input is to introduce adequate moduli space of $G$-shtukas at these deeper levels that supports the Heegner-Drinfeld cycles in this setting, and to study their geometry and cohomology.

\subsection{Statement of the main results}
Let $X$ be a smooth projective and geometrically connected curve of genus $g$ over a finite field $k = \Fq$ of characteristic $p \neq 2$ and let $F = F(X)$ be its function field.
Let $\Sigma$ be a (not necessarily reduced) effective divisor of degree $N$ on $X$ and let $|\Sigma| \se |X|$ be its set of closed points.
We fix a decomposition 
$|\Sigma| = \Sf \amalg \Si$
such that $\Sigma$ is reduced at all points in $\Sigma_\infty$.
We call $\Sf$ (respectively $\Si$) the set of finite (respectively infinite) places. 
We say that a quadratic cover $\nu \colon X'\to X$, where $X'$ is a second smooth projective and geometrically connected curve over $\Fq$, satisfies the \emph{generalized Heegner hypothesis} if
\begin{align}
	\label{eq:Heegner-hypothesis}
	\nu \text{ splits at every point in } \Sigma_f \text{ and is inert at all points in } \Sigma_\infty.
\end{align}
In particular, the ramification locus of $\nu$ is disjoint from $\Sigma$. We denote the ramification degree by $\r$.
We define a moduli space $\Sht^r_G(\Sii)$ of $G$-shtukas with $\Gamma_0(\Sig)$-level structure, $r$ modifications and additional supersingular legs at the infinite places $\Si$. 
It comes with a natural projection map $\Pi_G^r \colon \Sht^r_G(\Sii) \to X^r \times \frSi$ and can thus be thought of as an analogue of an integral model of the modular curve for not necessarily square-free level structure. Here, $\frSi = \prod_{x \in \Si} \Spec(k(x))$ is the product of the residue fields of the infinite places.
Our main result is the following generalization of \cite[Theorem 1.6]{Yun2017} (in the case $\Sig = \emptyset$) and \cite[Theorem 1.2]{Yun2019} (in the case $\Sig$ square-free) to deeper level.

\begin{theorem} 
	\thlabel{thm:int-period}
	Let $\pi = \otimes_{x \in |X|} \pi_x$ be a cuspidal automorphic representation of $\PGL_2(\A_F)$ of conductor $\Sig$. 
	Let $\nu \colon X' \to X$ be a quadratic cover such that the generalized Heegner hypothesis \eqref{eq:Heegner-hypothesis} is satisfied. 
	Moreover, let $r$ be a non-negative integer with $r \equiv |\Si| \ \text{ mod } 2$. 
	Then
	\begin{equation}\label{eq:main-thm}
		\frac{|\om_{X}| q^{\rho/2 - N} \cdot \wt{L}_{\Sig}(\pi)}{2(- \log q)^{r}\cdot L(\pi, \Ad, 1) } \cdot \sL_{F'/F}^{(r)}(\pi, 0) = \left( Z_\pi^{\mu}(\xi), Z_\pi^{\mu}(\xi) \right)_{\Sht'^r_G(\Sig;\Si)}.
	\end{equation}
\end{theorem}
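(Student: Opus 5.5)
We follow the strategy of Yun--Zhang \cite{Yun2017, Yun2019}: compare a geometric and an analytic expansion of one distribution on the spherical Hecke algebra $\sH$ away from $\Sig$ and the ramification of $\nu$. On the analytic side we take the relative trace formula for the period integral of $\PGL_2$ along the torus $T' = \Res_{F'/F}\Gm/\Gm$, with test function at each $x \in |\Sig|$ the characteristic function of the (deeper) Iwahori-type subgroup $K_0(\Sig_x)$. This gives an orbital-integral expansion $\JJ_r(f) = \sum_{u} \JJ_r(u, f)$ of the $r$-th derivative, indexed by invariants $u$ in $\PP^1(F)\setminus\{1\}$. Spectrally, the contribution of $\pi$ is $\lambda_\pi(f)$ times the period $|\int_{T'} \varphi|^2 / \langle \varphi,\varphi \rangle$ for the new vector $\varphi$. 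We evaluate this period with Waldspurger's formula. The local factors at the places of $\Sf$ (where $\nu$ splits and $\pi_x$ may have conductor exponent $\geq 2$) are computed explicitly for the newform. This is where $q^{-N}$ and the correction $\wt{L}_{\Sig}(\pi)$ enter, and the remaining factors give $|\om_X| q^{\rho/2}$ and $L(\pi,\Ad,1)$.

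On the geometric side we define Heegner--Drinfeld cycles $\Sht^\mu_{T}\to \Sht'^r_G(\Sig;\Si)$ using the splitting at $\Sf$ and the inertness at $\Si$. We then consider the intersection pairing $\II_r(f) = (Z^\mu, f * Z^\mu)$, with $f$ acting by Hecke correspondences. Following Yun--Zhang, we express $\II_r(f)$ as the trace of Frobenius on the cohomology of a moduli space $\cM_d$ of Hitchin-type data fibred over a space $\cA_d$ of invariants. The matching step shows that, for $d$ large relative to $g$ and $N$, the direct image complexes on $\cA_d$ for the geometric and analytic sides agree. The analytic side is the $r$-th derivative of point counts of a space $\cN_d$. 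Concretely, both sides should decompose as $\bigoplus$ over local conditions at $|\Sig|$, and we must check that the local orbital integral for $\mathbf{1}_{K_0(\Sig_x)}$ matches the local intersection count on the deeper-level Hecke stack. We expect this local identity to be the main obstacle. It relies on the earlier study of the geometry of $\Sht'^r_G(\Sig;\Si)$ at deeper level: its structure over $\frSi$ and the behaviour of the $\Gamma_0(\Sig)$-level at non-reduced points of $\Sig$, where the bundle of chains of lattices is no longer given by a single Iwahori condition. We would handle it by reducing to a lattice-counting computation in $F_x^2$ for split $F'_x = F_x \times F_x$, separately for each possible valuation of the invariant.

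With $\II_r(f) = c\,\JJ_r(f)$ for all $f$ in an ideal of $\sH$ (the matching holds for $\deg f$ large), we finish as follows. We decompose the cohomology of $\Sht'^r_G(\Sig;\Si)$ into Eisenstein and cuspidal parts, using the finiteness results for the Hecke action established earlier for deeper level. This lets us isolate the $\pi$-isotypic component via the Hecke eigencharacter $\lambda_\pi$. Comparing the $\lambda_\pi$-coefficients, and using that the Eisenstein contributions cancel on both sides as in \cite[\S 7]{Yun2017}, gives the identity \eqref{eq:main-thm} for $Z_\pi^\mu(\xi)$. The parity condition $r \equiv |\Si| \bmod 2$ ensures that the global root number makes the lower derivatives vanish consistently on both sides.
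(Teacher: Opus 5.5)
Your geometric side and the final isolation of $\pi$ through the cohomological spectral decomposition are in line with the paper. The analytic side, however, uses the wrong period, and as stated it cannot produce $\sL^{(r)}_{F'/F}(\pi,0)$.

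\textbf{The analytic side.} You take the relative trace formula for the non-split torus $T'=\Res_{F'/F}\Gm/\Gm$ and evaluate $|\int_{[T']}\varphi|^2/\langle\varphi,\varphi\rangle$ by Waldspurger's formula. But $T'$ is $F$-anisotropic, so this distribution carries no $|\cdot|^s$-twist. Waldspurger's formula only computes the central value $L(\tfrac12,\pi_{F'})$, which even vanishes when $\#\Si$ is odd, so there is nothing to differentiate $r$ times.

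In the paper, following Yun--Zhang, the non-split torus appears only on the geometric side: the Heegner--Drinfeld cycles replace the $T'$-side of Jacquet's comparison. The analytic side is Jacquet's relative trace formula for the split diagonal torus, with character $\mathbf{1}\boxtimes\eta$ and an $s$-twist:
\[
\JJ(f,s)=\int^{\mathrm{reg}}_{[A]\times[A]}\KK_f(h,h')\,|hh'|^{s}\,\eta(h')\,dh\,dh' .
\]
Its cuspidal terms are products of Hecke periods $\Pc(\pi(f)\phi,s)\,\Pc_\eta(\ov\phi,s)$. These factor through the local functionals $\lambda^\nat(W,\mathbf{1},s)$ and $\lambda^\nat(W,\eta,s)$, giving $L(\pi,s+\tfrac12)L(\pi\otimes\eta,s+\tfrac12)/L(\pi,\Ad,1)$ times local spherical characters. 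The key identity is then $\II^\mu(f)=(-\log q)^{-r}\partial_s^r\bigl(q^{Ns}\JJ(f^{\Sig},s)\bigr)|_{s=0}$.

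Correspondingly, the deeper-level local input is not a local Waldspurger period but $\JJ_{\pi_x}(\mathbf{1}_{\Gamma_0(\pf_x^{n})},s)$, computed from the newform $W_{x,0}(\mathrm{diag}(a,1))=\mathbf{1}_{\Oc_x^\times}(a)$. One gets:
\begin{itemize}
\item $\lambda^\nat(W_{x,0},\chi,s)=1$ for unramified $\chi$;
\item $\theta^\nat(W_{x,0},W_{x,0})=L(\pi_x\times\wt{\pi}_x,1)^{-1}$;
\item $\vol(\Gamma_0(\pf_x^n))=q_x^{1-n}(1+q_x)^{-1}\vol(G(\Oc_x))$.
\end{itemize}
This is exactly where $q^{-N}$ and $\wt{L}_{\Sig}(\pi)$ come from.

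\textbf{The Eisenstein step.} There is a second, smaller gap: at deeper level the Eisenstein contributions are not removed ``as in'' Yun--Zhang. The principal series $\pi(\chi,\chi^{-1})$ has nonzero $K_0(\Sig)$-invariants whenever $\chi$ has conductor at most $\Sigha$. So the continuous spectrum of $\JJ(f,s)$ contains Eisenstein series attached to ramified characters, and the usual Eisenstein ideal $\Ic^{|\Sig|}_{\Eis}$, defined via $\Q[\Pic_X(k)]$, does not kill them.

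The paper replaces it by $\wt{\Ic}^{\Sig}_{\Eis}=\ker\bigl(\sH^{|\Sig|}\to\Q[\Pic_X^{\Sigha}(k)]\bigr)$ and proves the needed finiteness and closed-immersion properties for it. It then uses this ideal in three places: the spectral expansion of $\JJ$, the cohomological spectral decomposition, and the Chebotarev argument that extends the key identity from the $h_D$ with $\deg D\gg 0$ to all of $\wt{\sH}^{\Sig}_{\ell}$. That extension really needs this argument, since the span of those $h_D$ is not an ideal.

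\textbf{Minor points.} The local lattice count you single out as the main obstacle is not needed. The level enters the Hitchin bases only through global conditions such as $a|_{\Sig}$ nowhere vanishing, $b|_{\Sig}=0$ and $\deg\Jc=d+\rho-N$, after which Yun--Zhang's global comparison of the two fibrations goes through. The genuinely new geometric inputs are the smoothness at non-reduced points of $\Sig$ (the one-leg Schubert variety has special fiber $\A^1\amalg\A^1$) and the horocycle lemmas. Also, the parity condition $r\equiv\#\Si \bmod 2$ is what makes $\Sht^r_G(\Sig;\Si)$ nonempty; it is not a statement about lower derivatives vanishing.
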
 
Let us briefly explain the terms appearing.
Here, $Z_\pi^{\mu}(\xi) \in H^{2r}_c(\Sht'^r_G(\Sig;\xi), \Qlbar)$ is the $\pi$-isotypic component of a Heegner-Drinfeld cycle defined on the base change $\Sht'^r_G(\Sig;\xi) \defined \Sht^r_G(\Sig;\Si) \times_{X^r \times \frSi} X'^r \times \Spec(\bar k)$ where $\xi \in \frSi(\bar k)$ for an algebraic closure $\bar k$ of $k$. 
Then $\left( \cdot \, , \,  \cdot \right)_{\Sht'^r_G(\Sig;\Si)}$ is the cup product pairing on compactly supported cohomology of $\Sht'^r_G(\Sig;\xi)$.

On the other side, $\sL_{F'/F}(\pi, s)$ is the base change $L$-function of $\pi$ normalized such that 0 is the center of its functional equation.
The value $\wt L_\Sig(\pi)$ is essentially a normalized product of local Rankin-Selberg $L$-values of the product of the local factor $\pi_x$ of $\pi$ at $x$ with its contragradient at the places $x \in \Sig$ that occur with multiplicity bigger than 1. 
Moreover, $|\omega_X| = q^{-(2g-2)}$ corresponds to 
the degree of the canonical bundle of $X$.

\subsubsection{Moduli spaces of shtukas}

The classical Gross-Zagier formula relates the value of the  first derivative of the (base-change) $L$-function of an elliptic curve over $\mathbb Q$ to the N\'eron-Tate height of a Heegner point on the elliptic curve and is used to establish cases of the Birch and Swinnerton-Dyer conjecture in rank 1. 
Equivqalently, it relates the value of the first derivative of the $L$-function of a modular form of level $\Gamma_0(N)$ to an arithmetic self-intersection number of a certain Heegner divisor on the integral model of the modular curve of level $\Gamma_0(N)$, which is defined using Drinfeld level structures when $N$ is not square-free.

A generalization of the Yun-Zhang formula to not necessarily square-free level, as for the modular curve, seemed to be out of reach due to the lack of an appriopriate ``integral model'' that supports the special cycles. 
In particular, the approach using Drinfeld level structures for shtukas as defined in \cite{Bieker2023} does not seem to have a good analogue in the situations where multiple legs collide.
Instead, we use a more naive approach and define the moduli space of shtukas $\Sht^r_G(\Sii)$ as a moduli space of shtukas for a smooth affine group scheme on $X$ encoding the $\Gamma_0(\Sigma)$-level structure (e.g. coming from Bruhat-Tits theory).
In the case with Iwahori level structure this recovers the stack of the same name used by Yun and Zhang.

When the level is not square-free, in the situation with both a single finite and infinite leg (which is the most direct analogue of the modular curve in this setting) we also do not recover the integral model defined using Drinfeld level structures, but rather an open substack therein. 
In particular, in the fibers over points with higher multiplicity our $\Sht^1_G(\Sii)$ is missing several components that appear when using Drinfeld level structure, including all supersingular points.
More precisely, it is the smooth locus of the special fiber of the moduli space of shtukas with Drinfeld level structure. 
As the Heegner-Drinfeld cycles are supported only on the remaining (open) components (compare \cite[III, Proposition 3.1]{Gross1986} for the corresponding statement for the modular curve), this difference is not material for us.  
For a more detailed discussion of the different "integral models" at deeper level compare also \cite[Remark 2.20]{Bieker2023}.

\begin{theorem}
	The moduli space of shtukas  $\Sht^r_G(\Sii)$ is smooth over $k$ of dimension $2r$. 
	Moreover, the projection $\Sht^r_G(\Sii) \to X^r \times \frSi$ is flat of relative dimension $r$ and smooth over $(X \setminus \Sigma_{\Iw})^r \times \frSi$. 
	\thlabel{thm:geometry-intro}
\end{theorem}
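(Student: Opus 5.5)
Following the strategy of Varshavsky and of Yun--Zhang, we reduce to a local model computation. By construction $\Sht^r_G(\Sii)$ is the fiber product of the Hecke stack $\Hecke^r_G(\Sii) \to \Bun_G(\Sigma) \times \Bun_G(\Sigma)$ with the graph of the Frobenius $\Bun_G(\Sigma) \to \Bun_G(\Sigma) \times \Bun_G(\Sigma)$, $(\id, \Fr)$. Here $\Bun_G(\Sigma)$ is the stack of torsors for the smooth affine Bruhat--Tits group scheme $\mathcal{G}$ over $X$ encoding the $\Gamma_0(\Sigma)$-level. Since $\mathcal{G}$ is smooth affine, $\Bun_G(\Sigma)$ is a smooth algebraic stack of dimension $3(g-1) + \deg \Sigma$ (or its analogue). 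Because $d\Fr = 0$, the standard argument shows that the projection $\Sht^r_G(\Sii) \to \Hecke^r_G(\Sii) \to \Bun_G(\Sigma) \times X^r \times \frSi$ (remembering only the source bundle) is étale locally equivalent to $\Hecke^r_G(\Sii) \to \Bun_G(\Sigma)$ via the target. So all claims about smoothness, flatness and relative dimension over $X^r \times \frSi$ reduce to the corresponding claims for the Hecke stack, and hence, by the usual local model diagram, for the Beilinson--Drinfeld Grassmannian $\Gr^r_{\mathcal{G}} \to X^r$ with the minuscule (non-reduced) bounds $\mu$ at each leg, times the supersingular modifications at $\Si$.

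The supersingular legs at $x \in \Si$ are modifications at the fixed point $x$. They contribute an Atkin--Lehner-type component (a point, i.e.\ $\frSi$). They do not change dimensions and are smooth over $\frSi$, exactly as in Yun--Zhang. The real content is the legs in $X$. Away from $|\Sigma|$, $\mathcal{G} = \PGL_2$ and the bounded Grassmannian for $\mu$ minuscule is a $\P^1$-bundle over $X$, so each leg contributes smoothly with relative dimension $1$. Near $x \in \Sf$ with multiplicity $n_x$, the group $\mathcal{G}_x$ is the stabilizer of a lattice chain $\Lambda_0 \supset \Lambda_1$ with $\Lambda_0/\Lambda_1 \cong \Oc/\varpi^{n_x}$. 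The local model is therefore a moduli of pairs of lattice chains with minuscule relative position, i.e.\ pairs of lines $\ell_0 \subset \Lambda_0/\varpi$ and $\ell_1 \subset \Lambda_1/\varpi$ compatible with the maps $\Lambda_1 \to \Lambda_0$ and $\varpi\Lambda_0 \to \Lambda_1$, in families over $X$ near $x$. Over $y \ne x$ this is $\P^1$. Over $x$ one computes the special fiber explicitly. For $n_x = 1$ it is two $\P^1$'s meeting at a point (Iwahori case, flat but not smooth). For $n_x \geq 2$, the definition of $\Hecke$ using $\mathcal{G}$-torsors (rather than Drinfeld level) already cuts the fiber down to the components where the modification is compatible with the chain. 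I expect this to be a disjoint union of smooth curves, so that the total space is smooth of dimension $2$ and flat over $X$. This explains why smoothness holds over $(X \setminus \SIw)^r$, with $\SIw$ the multiplicity-one points.

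For several legs, the Beilinson--Drinfeld Grassmannian is built by convolution. Since each single-leg local model is flat with reduced equidimensional fibers of dimension $1$, and the total spaces are smooth of dimension $2$ (regular, with normal crossing special fiber in the Iwahori case), the convolution is flat of relative dimension $r$ over $X^r$ with smooth total space of dimension $2r$. The usual argument applies: iterated fibrations, each locally isomorphic to the one-leg model, give smoothness of the total space and flatness via miracle flatness (a map from a regular space to a regular base with equidimensional fibers of the right dimension). Transferring back through the Frobenius-graph argument gives that $\Sht^r_G(\Sii)$ is smooth over $k$ of dimension $2r$, flat of relative dimension $r$ over $X^r \times \frSi$, and smooth over $(X \setminus \SIw)^r \times \frSi$.

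The main obstacle is the explicit local computation at points of multiplicity $n_x \geq 2$. One must show that the $\mathcal{G}$-torsor Hecke stack's special fiber is smooth of dimension $1$, and that the total local model is regular and flat there, including when several legs collide at $x$. I would first do the one-leg case by writing the local model in coordinates. The chain condition forces $\ell_1$ to be determined by $\ell_0$ except on one degenerate line, and that line is excluded by the $\mathcal{G}$-torsor condition when $n_x \geq 2$. I would then check that the collision of legs is handled by the convolution product being locally a product.
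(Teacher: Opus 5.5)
Your reduction is the same as the paper's. The local model theorem (or the argument of Yun--Zhang's Proposition 3.9) transfers smoothness and flatness from $\Sht^r_G(\Sii)$ to the Beilinson--Drinfeld Schubert variety. The supersingular legs enter only through the Atkin--Lehner isomorphism. The iterated-fibration structure reduces to one leg, and duality reduces to $\mu^+$. Points away from $\Sig$ and points of $\SIw$ are the cases already treated by Yun--Zhang.

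The gap is the step you yourself flag as the main obstacle, which is the only genuinely new input (\thref{prop:schubert-deep-smooth}). You only \emph{expect} the one-leg model to be smooth at a point $x$ with $n = n_x \geq 2$, and the sketch you give for it is not correct. With $\Lambda_0 = \Oc^2$ and $\Lambda_1 = \varpi^n\Oc\oplus\Oc$ one has $\varpi\Lambda_0 \not\subset \Lambda_1$. So there is no map $\varpi\Lambda_0 \to \Lambda_1$, and the Iwahori-type linear-algebra model does not carry over. By Beauville--Laszlo, the special fiber actually parametrizes pairs $\Lambda' \subset \Lambda$ such that $\Lambda \subset \Oc_R^2$ and $\Lambda' \subset \varpi^n\Oc_R\oplus\Oc_R$ have colength one and $\Lambda/\Lambda'$ is locally free of rank one over $\Oc_R/\varpi^n$. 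The colength and inclusion conditions alone cut out two $\P^1$'s meeting in one point:
\begin{itemize}
\item $\{\Lambda' = \varpi^n\Oc\oplus\varpi\Oc\}$ with $\Lambda$ arbitrary;
\item $\{\Lambda = \varpi\Oc\oplus\Oc\}$ with $\Lambda'$ arbitrary.
\end{itemize}
The freeness (torsor) condition removes exactly that meeting point, where $\Lambda/\Lambda' \cong \Oc/\varpi^{n-1}\oplus\Oc/\varpi$. It does not exclude a whole degenerate line, as you write; that would discard the second component. The special fiber is $\A^1\amalg\A^1$.

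To turn this into a proof you also need the decomposition scheme-theoretically, over arbitrary $k(x)$-algebras $R$, not just on field-valued points. The paper does this as follows.
\begin{itemize}
\item It takes the open loci $U_1$, $U_2$ where $\Lambda/\Lambda' \to \Oc_R^2/(\varpi^n\Oc_R\oplus\Oc_R)$, respectively $\varpi^{-n}\Lambda'/\Lambda \to (\Oc_R\oplus\varpi^{-n}\Oc_R)/\Oc_R^2$, is an isomorphism.
\item It checks on points that $U_1$ and $U_2$ are disjoint and cover, which is where $n \geq 2$ is used.
\item It identifies $U_1 \cong \P^1\setminus\{e_1\}$ via $\Lambda \mapsto \Oc_R^2/\Lambda$, with inverse $\Lambda' = \Lambda\cap(\varpi^n\Oc_R\oplus\Oc_R)$, and treats $U_2$ symmetrically.
\end{itemize}
Knowing that both fibers are smooth curves does not by itself make the total space flat, let alone smooth, over $\Oc_{X,x}$: there could be $\varpi$-torsion supported on the special fiber. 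The paper uses flatness of the one-leg global Schubert variety as an input and only then concludes smoothness from the fibers. Your later appeal to miracle flatness for several legs presupposes this regularity of the one-leg model, so it cannot replace it.
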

Here $\Sigma_{\Iw} \se X$ is the set of points in $\Sig$ that appear with multiplicity 1. In particular, the fibers above points with higher multiplicity are smooth.

The spherical Hecke algebra $\sH^\Sig_G$ away from the level $\Sigma$ acts on the compactly supported cohomology $H^\bullet_c(\Sht^r_G(\Sii), \Qlbar)$. 
As in the unramified or square-free level situations (compare \cite[Theorem 1.1]{Yun2019}) we have a spectral decomposition of the middle degree cohomology.
We define a deeper level analogue  $\wt I^{\Sig}_{\rm Eis} \se \sH^\Sig_G$ of the Eisenstein ideal of \emph{loc.~cit.~}in the square-free situation. For a maximal ideal $\mathfrak{ m}$ of $\sH^\Sig_G$ not contained in the Eisenstein ideal we denote by $V(\xi)_{\mathfrak{ m}} \se H^{2r}_c(\Sht^r_G(\Sig, \xi), \Qlbar)$ the generalized eigenspace for $\mathfrak{ m}$ under the Hecke action.
\begin{theorem}\thlabel{thm:spectral-dec}
	The middle degree cohomology group $H^{2r}_c(\Sht^r_G(\Sig, \xi), \Qlbar)$ decomposes canonically as $\sH^\Sig_G$-module as
	\[
		H^{2r}_c(\Sht^r_G(\Sig, \xi), \Qlbar) = \left( \bigoplus_{\mathfrak{ m} \not \supset \cI_{\Eis}} V(\xi)_{\mathfrak{ m}} \right) \oplus V(\xi)_{\rm Eis}
	\]
	such that
	\begin{itemize}
		\item  the space $V(\xi)_{\mathfrak{ m}}$ is finite dimensional and non-zero only for finitely many maximal ideal $\mathfrak{ m}$ of $\sH^\Sig_G$ (assumed not to be contained in the Eisenstein ideal $\wt I^{\Sig}_{\rm Eis}$)
		\item $V(\xi)_{\Eis}$ is a finitely generated module over $\sH^\Sig_G$ on which the Hecke action of some power of $\wt I^{\Sig}_{\rm Eis}$ is trivial.  
	\end{itemize}
	
\end{theorem}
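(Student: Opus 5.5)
We follow the strategy of \cite[Section 5 and Theorem 1.1]{Yun2019}, which in turn follows \cite{Yun2017}. The decomposition will be deduced from two finiteness statements. First, the action of $\sH^\Sig_G$ on $H^{2r}_c(\Sht^r_G(\Sig, \xi), \Qlbar)$ factors through a quotient that is finitely generated over a polynomial ring on which the Eisenstein ideal acts. Second, this quotient becomes finite after inverting away from $\wt I^{\Sig}_{\rm Eis}$. Once both hold, the statement is pure commutative algebra. Write $\bar{\sH}$ for the image of $\sH^\Sig_G$ in $\End(H^{2r}_c)$, and suppose $\bar{\sH}$ is finitely generated over $\Qlbar$ with $\bar\sH/\wt I^{\Sig}_{\rm Eis}\bar\sH$ of finite codimension in an appropriate sense. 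Then $\mathrm{Spec}\,\bar\sH$ is the disjoint union of a finite set of closed points off $V(\wt I^{\Sig}_{\rm Eis})$ and a closed subscheme supported on $V(\wt I^{\Sig}_{\rm Eis})$. The idempotents of this decomposition give the splitting, with $V(\xi)_{\mathfrak m}$ the generalized eigenspaces and $V(\xi)_{\rm Eis}$ killed by a power of $\wt I^{\Sig}_{\rm Eis}$.

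To establish the finiteness we truncate by the instability index. Using the degree of the Harder--Narasimhan filtration of the underlying $G$-bundle (forgetting the level structure at $\Sig$ and the supersingular legs), we define open substacks $\Sht^{\le d}$ of finite type with $\Sht = \bigcup_d \Sht^{\le d}$. The key geometric input is that for $d$ large relative to $\deg \Sig$, $g$ and $r$, the boundary strata $\Sht^{d}\setminus\Sht^{\le d-1}$ are fibered over moduli of $\Gm$-shtukas (line bundle shtukas with level) with fibers that are affine spaces or a tower of such. This is as in \cite[Section 5]{Yun2019}, with the Bruhat--Tits group scheme at nonreduced points of $\Sig$ replacing the Iwahori one. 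Consequently the maps $H^{2r}_c(\Sht^{\le d}) \to H^{2r}_c(\Sht^{\le d+1})$ are isomorphisms modulo pieces coming from the boundary. On those pieces the Hecke algebra acts through the constant-term map to the Hecke algebra of the torus, $\sH^\Sig_T = \Qlbar[\Pic_X(\Fq)$-type$]$, twisted appropriately. The deeper-level Eisenstein ideal $\wt I^{\Sig}_{\rm Eis}$ is, by definition, the kernel of this Satake/constant-term map, possibly with corrections at $\Sig$. This yields the two-step filtration $0\to H_c^{2r}(\Sht^{\le d})_{\rm im}\to H_c^{2r}(\Sht)\to (\text{Eisenstein part})$. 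The Eisenstein part is finitely generated over $\sH^\Sig_G$ and killed by a power of $\wt I^{\Sig}_{\rm Eis}$. The first term is finite dimensional because $\Sht^{\le d}$ is of finite type.

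We would carry out the steps in this order: (1) define the truncations and show the $\sH^\Sig_G$-action via Hecke correspondences respects the filtration up to a bounded shift; (2) analyse the geometry of the deep strata at $\Sig$-level, where smoothness and flatness from \thref{thm:geometry-intro} are used to control dimensions; (3) compute the Hecke action on the boundary cohomology through the torus; (4) apply the commutative algebra above.

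We expect step (2)--(3) to be the main obstacle. At nonreduced points of $\Sig$ the level structure on a destabilized bundle $\Lc\oplus\Lc'$-extension is not simply a choice of flag. It is a flag modulo $\Sig_x$, and the reductions compatible with it split into several strata indexed by how the destabilizing line meets the level structure at each such $x$. We would first try to stratify by the relative position of the HN line with respect to the level structure at each $x\in\Sf$. We would then show that each stratum is an affine-space bundle over a moduli of $T$-shtukas with partial level, on which $\sH^\Sig_G$ acts via a character killed by $\wt I^{\Sig}_{\rm Eis}$. This gives the precise definition of $\wt I^{\Sig}_{\rm Eis}$ as the intersection of the corresponding kernels.
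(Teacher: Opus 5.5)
Your global architecture is the one the paper uses, by quoting Yun--Zhang: truncation by instability, horocycles fibred over rank-one shtukas (the fibres are $[\Ga^m/Z]$, affine spaces modulo a finite group, not just affine spaces), a cohomological constant term intertwining the Hecke action with a Picard-group action through $\wt a^{\Sig}_{\Eis}$, and then commutative algebra as in \thref{th:spec decomp}. You genuinely diverge from the paper at points $x\in\Sf$ with $n_x\ge 2$. The paper does not stratify by relative position. It keeps the fractional-twist invariant $\kappa(\Ec^\dagger)=(D,\inst(\Ec^\dagger))$ and asserts, in \thref{lem:unstable-divisor}, three things: the minimising $D$ is unique with coefficients in $\{0,n_x\}$; $\Ec^\dagger$ is recovered from $\Ec(\frac D2)$; and instability grows linearly away from $D$. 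Corollary~\ref{c:kBunSig} and \thref{l:kappa mod} then make the horocycles literally those of the Iwahori case, fibred over the level-free $\Sht^N_1$. Everything else is taken verbatim from Yun--Zhang.

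Your suspicion that this is too naive is correct. Take $n_x=2$ and a very unstable $\Ec$ with maximal line subbundle $\Lc$, locally $\Lc=\Oc e_1\subset\Oc^2$. Give it the level quotient $(a,b)\mapsto\varpi a+b \bmod \varpi^2$, so the destabilising line agrees with the level line mod $\varpi$ but not mod $\varpi^2$.
\begin{itemize}
\item $\Ec(-\frac x2)=\ulcorner_x\Ec$ and $\Ec(\frac x2)=\Ec\lrcorner_x$ both have instability $\inst(\Ec)+1$.
\item $\Lc\cap\Ec(-\frac{2x}{2})=\Lc(-x)$, so $\inst(\Ec(-\frac{2x}{2}))=\inst(\Ec)$.
\end{itemize}
Hence both $D=0$ and $D=2x$ minimise, and $\Ec(-\frac{2x}{2})\neq\ulcorner_{2x}\Ec$. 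The lemma's proof infers $\Fc(-\frac{cx}{2})=\ulcorner_{cx}\Fc$ from freeness of the graded pieces, but freeness excludes only one of the possible colength-one steps.

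In general, for relative position $0<j_x<n_x$ the function $c\mapsto\inst(\Ec(-\frac{cx}{2}))$ peaks at $c=j_x$. So parts (2) and (3) of that lemma fail, and uniqueness in (1) fails when $2j_x=n_x$. Such strata occur at arbitrarily large instability, and the relative position can change at a leg lying over $x$; your strata must allow that. On such a stratum the line bundle $\Lc^{-1}\otimes\Ec/\Lc$ (suitably twisted) acquires a trivialisation along $\min(j_x,n_x-j_x)\,x$. The rank-one shtukas therefore acquire level at most $\Sigha$. This is exactly where $\Pic^{\Sigha}_X(k)$ in $\wt I^{\Sig}_{\Eis}$ comes from. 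It is also what is needed for the constant term to intertwine with $\wt a^{\Sig}_{\Eis}$ as the paper asserts, since level-free $\Sht^N_1$ carry only a $\Pic_X(k)$-action. So your stratification is the route that actually works; the paper's route buys verbatim reuse of Yun--Zhang at the price of missing these strata.

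Two steps of your own sketch need repair.

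First, the truncation filtration is not Hecke-stable, because $h_D$ raises instability. So the claim that the quotient is killed by a power of $\wt I^{\Sig}_{\Eis}$ is not meaningful as stated. The statement to aim for is that every $f\in\wt I^{\Sig}_{\Eis}$ acts on $V$ with finite-dimensional image (cf.\ \thref{prop:finiteness-coh}(1)). Prove it via the constant term being an isomorphism modulo constructible sheaves.

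Second, ``$\bar\sH/\wt I^{\Sig}_{\Eis}\bar\sH$ of finite codimension'' is the wrong hypothesis, since the Eisenstein locus is one-dimensional. What you need is that $V$ is finitely generated over a Noetherian quotient $\bar\sH$ of $\sH^{|\Sig|}$. This is not automatic: $\sH^{|\Sig|}$ is a polynomial ring in infinitely many variables. The paper gets it in \thref{prop:finiteness-coh}(2),(3) after adjoining $\Ql[\Pic^{\Sigha}_X(k)]^{\iota}$ to the image of the Hecke algebra. Given that, the conclusion follows:
\begin{itemize}
\item $\wt I^{\Sig}_{\Eis}\bar\sH$ is finite-dimensional.
\item $\Spec\bar\sH$ is the closed set cut out by $\wt I^{\Sig}_{\Eis}$ together with finitely many isolated closed points off it.
\item This gives the stated decomposition, with each $V(\xi)_{\mathfrak m}$ finite-dimensional.
\end{itemize}
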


To define the Heegner-Drinfeld cycles we consider the moduli stack of shtukas $\Sht_T^{\un \mu}(\mu_\infty \Si)$ of rank 1 on the double cover $X'$ of $X$ as defined in \cite{Yun2019} associated to $\un \mu \in \{\pm 1\}^r$ and $\mi \in \{\pm 1\}^{\Si}$.
It is a finite \'etale cover of $X'^r \times \frSi'$, and in particular proper and smooth of dimension $r$ over $k$. 

To define the Heegner-Drinfeld cycles we additionally choose a section $\mf$ of $\nu \colon \nu^{-1}(\Sf) \to \Sf$, recall that $\nu$ splits along $\Sf$. For $\mu = (\un \mu, \mf , \mi)$ we then construct a map
\[
	\theta'^{\mu} \colon \Sht_T^{\un \mu}(\mu_\infty \Si) \to \Sht_G^{r}(\Sii) \times_{(X^r \times \frSi)} (X'^r \times \frSi'), 
\] 
which can also be interpreted as an instance of functoriality for moduli spaces of shtukas of \cite{Breutmann2019, Yun2022} under the (pseudo-)closed immersion $T = \Res_{X'/X} \Gm \hookrightarrow \cG_{\GammaN}$ of group schemes over $X$.

The associated \emph{Heegner-Drinfeld cycle} is $\Zc^{\mu} \defined \theta'^\mu_*[\Sht_{T}^{\un \mu}(\mu_\infty \Si)] \in \Ch_{c,r}(\Sht'^r_G(\Sii))$, and its cycle class is denoted by $Z^\mu \defined \cl(\Zc^{\mu}) \in H^{2r}_c(\Sht'^r_G(\Sii))_\Q$. Moreover, for a maximal ideal $\mathfrak{m}$ in the Hecke algebra away from the Eisenstein ideal we denote by $Z^{\mu}_{\mathfrak{m}}$ its $\mathfrak{m}$-isotypical component under the spectral decomposition.
In particular, a cuspidal automorphic representation $\pi$ of $G(\A)$ of conductor $\Sig$ gives rise to such a maximal ideal $\mathfrak{m}_\pi$, and we use the notation $Z^\mu_\pi = Z^\mu_{\mathfrak{m}_\pi}$ in this case. 
This is the class appearing in the main theorem above. 

%

\subsection{Overview and proof strategy}
Let us give an overview over the parts of the arguments that are new in our setting, compared to the unramified or square-free level cases treated in the literature. 

In Section \ref{sect:analysis}, we introduce the analytic setup and carry out the necessary computations.
The main novelty here is the introduction of the deeper level analogue of the Eisenstein ideal $\wt I^{\Sig}_{\rm Eis}$ in the Hecke algebra, which together with some explicit calculation of certain $L$-values in the deeper level case is used to study the spectral decomposition of the global spherical character. 
In particular, we define a local test function that serves at places with deeper level structure and compute the corresponding local spherical characters.

In Section \ref{sect:shtukas}, we study the moduli space of shtukas with not necessarily square-free level structure, prove the relevant geometric properties as announced in \thref{thm:geometry-intro}. We define the Heegner-Drinfeld cycles at deeper level and sketch how to translate the intersection problem into computing traces of Frobenius-Hecke operators on a certain Hitchin-type fibration.

In Section \ref{sect:cohomology}, we generalize the finiteness results and spectral decomposition on cohomology to our setting with not necessarily square-free level and use them to finish the proof of our main theorem.

Generally speaking, for the remaining parts of the argument we closely follow the strategy of \cite{Yun2017, Yun2019} and only give proofs in detail that are new in our setting and at most sketch the parts of the arguments that work essentially verbatim as in \emph{loc.~cit}.
In particular, the treatment of the two Hitchin-type fibrations and their comparison does not need to be modified in an essential way in the deeper level situation and we refer to \emph{loc.~cit} for details.

\subsection*{Acknowledgements}
The author thanks Zhiwei Yun for his suggestion to consider intersection problems on moduli spaces of shtukas at deeper level structure as well as his support. 
It is pleasure to also thank Lennart Gehrmann, Urs Hartl, Timo Richarz and Wei Zhang for helpful discussions surrounding the topic of this work.
The author thanks all participants of the workshop ``Shtukas and Higher Gross-Zagier Formulas'' in Darmstadt in March 2023.


\section{Automorphic representations, L-functions and the relative trace formula}
\label{sect:analysis}

We introduce the relevant notions surrounding automorphic representations, Hecke algebras and their relations to $L$-functions.
In particular, we introduce the modification of the test functions of \cite{Yun2019} at points with deeper level and compute the resulting local factors of the global spherical character in \thref{prop:J-pi-deeper-level} and \thref{thm:global-char}. 

We continue to use the notation from the introduction and denote by $X$ a smooth projective and geometrically connected curve over the finite field $k = \Fq$ of characteristic $p \neq 2$. 
Let $F$ be its function field and $|X|$ its set of closed points.
For a closed point $x \in |X|$ let $\Oc_x$ be the completion of its local ring $\Oc_{X,x}$ and let $\mathfrak{m}_x$ be its maximal ideal.
Let $\A = \A_F$ be the ring of adeles of $F$ and let $\OO = \OO_F \defined \prod_{x \in |X|} \Oc_x \se \A$ be the ring of integral adeles.

\subsection{Group theoretic setup}
We consider the reductive $F$-groups $\tilde{G} = \GL_{2,F}$ and $G = \PGL_{2, F}$ with their  diagonal tori $\tilde A \se \tilde G$ and $A \se G$.
For a quadratic extension $F'/F$ we have the corresponding non-split tori $\tilde T \defined \Res_{F'/F} \Gm \se \tilde G$ and $T \defined \tilde T / \Gm \se G$.

\sss{Haar measures}
For any smooth affine group $H$ over $F$ let $[H] \defined H(F) \setminus H(\A)$.
When $H$ is split reductive, we equip $H(\A)$ with the Haar measure normalized (unless otherwise stated) such that $H(\OO)$ has measure 1, and let $[H]$ have the quotient measure.

\sss{Level $\Sigma$}
\label{sec:level-sig}
We fix a (not necessarily reduced) divisor $\Sigma$ in $X$ as in the introduction and denote by $|\Sigma| \subset |X|$ its set of closed points.
Let $\Sigma_{\Iw} \se |\Sigma|$ be the set of points at which $\Sigma$ is reduced.
Let 
\begin{equation*}
	K_0(\Sigma) \defined \ker(G(\OO) \to \prod_{x \in |\Sigma|} B(\Oc_x/ \mathfrak{p}_x^{n_x} \Oc_x)),	
	\label{eq:definition-gamma0}
\end{equation*}
where $B \se \PGL_2$ is the subgroup of upper triangular matrices.
Then $K_0(\Sigma)$ is a compact open subgroup of $G(\A)$. We can write $K_0(\Sigma) = \prod_{x \in |X| \setminus \Sigma} G(\Oc_x) \times \prod_{x \in \Sigma} \Gamma_0(\pf_x^{n_x})$, where $\Gamma_0(\pf_x^{n_x}) \defined \ker(G(\Oc_x) \to G(\Oc_x/\pf_x^{n_x}))$.
We define $\tilde K_0(\Sigma) \se \tilde{G}(\A_F)$ similarly.

\subsection{Automorphic representations and Hecke algebras}
%
Let 
$
	\sH_x \defined \sC_c^\infty(G(\Oc_x) \setminus G(F_x)/ G(\Oc_x), \Q)
$
be the spherical local Hecke algebra at $x$.
Moreover, for a finite set of places $S \se |X|$ let $\sH^{S} \defined \otimes_{x \in |X| \setminus S}' \sH_x$.

Let $\pi$ be an automorphic representation of $G(\A)$.
Then $\pi$ factors as $\pi = \bigotimes_{x \in |X|}' \pi_x$ into its local factors. 
For every $x \in |X|$ let $n_x$ be its conductor exponent, i.e., the smallest non-negative number such that $\pi_x^{\Gamma_0(\pf_x^{n_x})} \neq 0$. 
Moreover, $\pi_x^{\Gamma_0(\pf_x^{n_x})}$ is one-dimensional in this case.
Then $n_x = 0$ for almost all $x \in |X|$ and we set $\Sigma \defined \sum_{x \in |X|} n_x [x]$ and $\dim(\pi^{K_0(\Sigma)})$ is one-dimensional. 
In particular, the Hecke algebra $\sH^{|\Sigma|}$ acts on $\pi^{K_0(\Sigma)}$ by a character
\[
	\lambda_\pi \colon \sH^{|\Sig|} \to \C.
\]

From the discussion in \cite[§3.1]{Yun2017} we get that $\sH^S$ has a basis by functions $h_D$ indexed by divisors $D = \sum_{x \in |X|} m_x [x] \in \Div(X - S)$ given by $h_D = \otimes_{x \in |X|} h_{m_x [x]}$ where $h_{m_x [x]}$ is the characteristic function of 
\[
	M_{m_x, x} \defined \Mat_{2}(\Oc_x)_{v_x(\det) = n_x} \se G(F_x).
\]
%

\sss{Local test functions}
\label{sec:test-function}
Recall that \cite[§2.4]{Yun2019} define local test functions for places $x \in |R \cup \Sig_\Iw|$:
\begin{itemize}
	\item For $x \in R$ they define two function $h_x^\bsq$ and $f_x^\bsq$ with matching orbital integrals.
	We do not recall the definition of these functions but refer to \emph{loc.~cit.} for more details. 
	\item 
	For $x \in \Sig_\Iw$ the test function is given by the characteristic function of the Iwahori  subgroup $\Iw_x = \Gamma_0(\pf_x)$ at $x$.
\end{itemize}  
At places $x \in |\Sigma| - | \Sigma_\Iw|$ we define the local test function to be 
\[
	f_x \defined \mathbf{1}_{\Gamma_0(\pf_x^{n_x})}.
\]
To summarize, for a Hecke function  $f \in \sH^{\Sigma \cup R}$ let 
$$ f^{\Sig} \defined f \otimes \left( \bigotimes_{x \in R} h_x^\bsq \right) \otimes \left( \bigotimes_{x \in \Sigma} \mathbf{1}_{\Gamma_0(\pf_x^{n_x})} \right) \in \sC_c^\infty(G(\A)).$$

\subsection{A modified Eisenstein ideal}

Let $S \se |X|$ be a finite set of places of $X$. 
Recall from \cite[§4.1]{Yun2017} and \cite[§2.2]{Yun2019} that the Eisenstein ideal $\Ic_{\Eis}^S$ is defined as the kernel of the map
$$ a_{\Eis}^S \colon \sH^S \to \Q[\Div(X \setminus S)] \twoheadrightarrow \Q[\Pic_X(k)]$$
induced by the Satake transform. 
We use a modified version of the Eisenstein ideal to account for points with deeper level structures. 
For the (effective) divisor $\Sigma = \sum_{x \in |X|} n_x [x]$ we set  
\[
	\SigHa \defined \sum_{x \in |X|} \left \lfloor \frac{n_x}{2} \right\rfloor [x] \in \Div(X).
\]
In particular, when $\Sigma$ is square-free, $\Sigha = 0$. 
We consider the Picard stack $\Pic_X^{\Sigha}$ with $\Sigha$-level structure that parametrizes line bundles $\cL$ on $X$ together with a trivialization $\theta \colon \Lc|_{\Sigha} \cong \Oc_{\Sigha}$ over $\Sigha$. 
By Weil's uniformization theorem, 
\[
	\Pic_X^{\Sigha}(k) \cong F^\times \backslash \A^\times /K(\Sigha),
\]
where 
\[
 	K(\Sigha) = \prod_{x \in |X| - |\Sig|} \Oc_x^\times \times \prod_{x \in |\Sig|} K_x\left(\pf_x^{\lfloor n_x/2 \rfloor}\right) 
\] 
with 
\[
	K_x\left(\pf_x^{\lfloor n_x/2 \rfloor}\right) = \begin{cases}
		\Oc_x^\times & 0 \leq n_x \leq 1, \\
		1 + \pf_x^{\lfloor n_x/2 \rfloor} \Oc_x & \text{else}
	\end{cases}.
\]   

The Picard stack with level structure has a forgetful map $\Pic^{\Sigha}_X \to \Pic_X$ and the involution $\iota_{\Pic}$ on $\Q[\Pic_X(k)]$ defined in \cite[§4.1.2]{Yun2017} lifts to an involution $\iota_{\Pic}^{\Sigha}$ on $\Q[\Pic^{\Sigha}_X(k)]$ defined by 
\[
	\mathbf{1}_{(\Lc, \theta)} \mapsto q^{\deg \Lc} \cdot \mathbf{1}_{(\Lc^\vee, \theta')}
\]  
where $\theta'$ is defined as the trivialisation of $\Lc^\vee|_\Sigha$ induced by $\theta$. Moreover, by composing the forgetful map with the degree map we get a short exact sequence
\[
	0 \rightarrow \Pic_X^{\Sigha, 0}(k) \rightarrow \Pic_X^{\Sigha}(k) \rightarrow \Z \to 0.
\]
Note that $\Pic_X^{\Sigha, 0}(k)$ is a finite abelian group as an extension of the finite group $\Pic_X^0(k) \cong \Jac_X(k)$ by $\Oc_{\Sigha}^\times$. 

Moreover, the map $a^\Sig_{\Eis}$ factors through
\begin{align*}
	\wt a^\Sig_{\Eis} \colon \sH^{|\Sig|} \xrightarrow{\rm{Sat}} \Q[\Div(X - \Sig)] & \to \Q\left[ \Pic_X^{\Sigha}(k)\right]\\
	D & \mapsto (\Oc_X(D), 1)
\end{align*}
using that $\Oc_X(D)$ has a canonical trivialization along $\Sig$ as the two divisors $\Sig$ and $D$ are disjoint by assumption.
\begin{definition}
	The modified Eisenstein ideal $\wt \Ic^{\Sig}_{\Eis}$ is the kernel of the map
	\[
		\wt a^{\Sig}_{\Eis} \colon \sH^{|\Sig|} \to \Q\left[ \Pic^{\Sigha}_X(k) \right].
	\]
\end{definition}
Note that in contrast to the square-free case, the modified Eisenstein ideal depends on the multiplicities in the divisor $\Sigma$ and not just its set of places.
In general, $\wt \Ic^{\Sig}_{\Eis} \se \Ic^{|\Sig|}_{\Eis}$ with equality in the square-free case.
The definition is done in such a way that for an automorphic representation $\pi$ with level at most $\Sig$ its character $\lambda_\pi$ does not factor through $\wt a_{\Eis}^\Sig$ if and only if $\pi$ is cuspidal.

We extend the finiteness results from the square-free case in \cite[Lemma 4.2]{Yun2017} and \cite[Lemma 2.1]{Yun2019}.

\begin{lemma}
	Let $\Sig$ as before and fix $x \in |X|- |\Sig|$.
	\begin{enumerate}
		\item The algebra $\Q\left[ \Pic^{\Sigha}_X(k) \right]^{\iota_{\Pic}^{\Sigha}}$ is finitely generated as $\sH_x$-module via $\wt a^\Sig_{\Eis}$.
		\item The map $\wt a^{\Sig}_{\Eis}$ is surjective. In other words, the induced map 
		\[
			\Spec\left(\wt a^\Sig_{\Eis}\right) \colon \Spec\left(\Q\left[ \Pic^{\Sigha}_X(k) \right]^{\iota_{\Pic}^{\Sigha}} \right) \to \Spec\left(\sH^\Sig\right)
		\] 
		is a closed immersion.
	\end{enumerate}
\end{lemma}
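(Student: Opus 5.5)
We follow the proofs of \cite[Lemma 4.2]{Yun2017} and \cite[Lemma 2.1]{Yun2019}, replacing $\Pic_X(k)$ by $\Pic^{\Sigha}_X(k)$ throughout. Write $d = \deg(x)$ and $\mathcal{L}_x \defined (\Oc_X(x), 1) \in \Pic^{\Sigha}_X(k)$, where $1$ is the canonical trivialization along $\Sigha$ (available because $x \notin |\Sig|$). Via the Satake transform, $\sH_x$ is the polynomial ring $\Q[h_x]$, where $h_x = h_{[x]}$ is sent to $t_x + q^{d} t_x^{-1}$ in $\Q[t_x^{\pm 1}]$, and $t_x \mapsto \mathbf{1}_{\mathcal{L}_x}$. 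So $\wt a^\Sig_{\Eis}(h_x) = \mathbf{1}_{\mathcal{L}_x} + q^{d}\,\mathbf{1}_{\mathcal{L}_x^{-1}}$. By the definition of $\iota^{\Sigha}_{\Pic}$, this element is $\iota$-invariant, and the same holds for the image of every $h_D$. Hence $\wt a^\Sig_{\Eis}$ indeed lands in the invariants $\Q[\Pic^{\Sigha}_X(k)]^{\iota^{\Sigha}_{\Pic}}$.

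For (1) I will use the finiteness of $\Pic^{\Sigha,0}_X(k)$ established just before the lemma. Its consequence is that the subgroup $\langle \mathcal{L}_x\rangle \cong \Z$ has finite index in $\Pic^{\Sigha}_X(k)$. Therefore $\Q[\Pic^{\Sigha}_X(k)]$ is a finite free module over $\Q[t_x^{\pm1}]$, spanned by finitely many coset representatives. In turn, $\Q[t_x^{\pm 1}]$ is finite over $\Q[t_x + q^{d} t_x^{-1}]$: it is generated by $1$ and $t_x$, because $t_x$ satisfies the monic equation $t_x^2 - h t_x + q^{d} = 0$ and $t_x^{-1} = q^{-d}(h - t_x)$. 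Consequently $\Q[\Pic^{\Sigha}_X(k)]$ is a finitely generated $\sH_x$-module. The invariant subring is a submodule, and $\sH_x$ is Noetherian, so the invariants are finitely generated as well.

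For (2) we must show that the invariant ring is spanned by the images $\wt a^\Sig_{\Eis}(h_D)$. The invariants are spanned by the elements $\mathbf{1}_{(\Lc,\theta)} + q^{\deg \Lc}\mathbf{1}_{(\Lc^\vee,\theta')}$ (in characteristic zero, these averages span). Fix such a class $(\Lc, \theta)$ and first treat the case $\deg \Lc \ge 0$. By strong approximation, or equivalently the finite-index argument above together with Riemann--Roch, we can write $(\Lc,\theta) = (\Oc_X(D),1)$ for some effective divisor $D$ disjoint from $\Sig$. Concretely, the class of $(\Lc,\theta) \otimes \mathcal{L}_y^{-m}$ for an auxiliary point $y$ and large $m$ contains an effective divisor away from $\Sig$; this holds because $F^\times$ is dense in $\prod_{v\in|\Sig|}\Oc_v^\times/K_v$ and by Riemann--Roch for large degree. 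Then $\wt a^\Sig_{\Eis}(h_D)$ equals $\sum_{E \le D} q^{\deg E}\mathbf{1}_{(\Oc(D-2E),1)}$, up to the standard formula. The leading terms give our element, and the remaining terms have strictly smaller $\deg$ of the positive part. We therefore conclude by induction on $|\deg|$; the negative-degree case reduces to the positive one via $\iota$. Surjectivity onto a quotient of the commutative algebra $\sH^\Sig$ then gives the closed immersion.

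The main obstacle is this representability step: showing that every class in $\Pic^{\Sigha}_X(k)$ of large degree is of the form $(\Oc_X(D),1)$ with $D$ effective and supported away from $|\Sig|$, and then running the triangular induction over the finite quotient $\Pic^{\Sigha,0}_X(k)$. I would handle it with the idelic description $F^\times\backslash \A^\times/K(\Sigha)$. First approximate the idele by an element of $F^\times$ at the places in $|\Sig|$, so that its component there becomes trivial; then apply Riemann--Roch to obtain a global section whose divisor avoids $\Sig$.
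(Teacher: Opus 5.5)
Your part (1) is correct and is the paper's argument. The part-(2) argument has a genuine gap. Your triangular induction on $|\deg|$ only produces an element $e_c=\mathbf{1}_c+q^{\deg c}\mathbf{1}_{c^{-1}}$ as the leading term of some $\wt a^\Sig_{\Eis}(h_D)$. So it needs every class $c$ with $\deg c\ge 0$ to be of the form $(\Oc_X(D),1)$ with $D$ effective and supported away from $|\Sig|$.

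Riemann--Roch plus approximation at $|\Sig|$ gives this only when $\deg c$ is large compared to $g$ and $\deg\Sig$. In small degree it is false, so the induction cannot start. In degree $0$ the only effective divisor is $D=0$, so you obtain only $\wt a^\Sig_{\Eis}(h_0)=1$. But the degree-$0$ part of the invariants is spanned by the $e_c$ for all $c\in\Pic^{\Sigha,0}_X(k)$, which is nontrivial e.g.\ whenever $\Jac_X(k)\neq 0$ or some $n_x\ge 4$.

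Such low-degree classes can only arise as leftover lower-order terms of combinations of $\wt a^\Sig_{\Eis}(h_D)$ with $\deg D$ large and cancelling leading terms. Showing that these span everything is an arithmetic statement about how prime divisors distribute among the ray classes of $\Pic^{\Sigha}_X(k)$, not a triangularity argument. The same obstruction already occurs for reduced $\Sig$ when $\Jac_X(k)\neq 0$, which is why Yun--Zhang do not argue this way.

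The missing idea is the one the paper uses. By (1) the map on spectra is finite, so after base change to $\Qlbar$ it suffices to prove injectivity on $\Qlbar$-points and on tangent spaces.
\begin{itemize}
\item \emph{Points.} Global class field theory identifies $\Pic^{\Sigha}_X(k)$ with the Weil group $W_F(\Sigha)$ of the ray class field for $\Sigha$. Points of the source are then characters $\chi$ up to $\chi\sim\chi^{-1}(-1)$. Two characters with the same Hecke eigenvalues $\chi(\Fr_y)+q_y\chi^{-1}(\Fr_y)$ for all $y\notin|\Sig|$ are equivalent by Chebotarev density. This is exactly where the arithmetic input your induction lacks enters.
\item \emph{Tangent spaces.} Use that $\Spec\Qlbar[\Pic^{\Sigha}_X(k)]$ is a disjoint union of copies of $\G_{m,\Qlbar}$ indexed by characters of the finite group $\Pic^{\Sigha,0}_X(k)$, and then argue as in Yun--Zhang.
\end{itemize}
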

\begin{proof}
	We adapt the proofs of \cite[Lemma 4.2]{Yun2017}. The first part follows analogously to \cite[Lemma 4.2 (1)]{Yun2017} by using the finiteness of $\Pic_X^{\Sigha, 0}(k)$. 
	
	For the second part (using the finiteness of the map) it suffices to show that the map is injective on $\Qlbar$-points and induces injections on all tangent spaces at $\Qlbar$-points.
	The injectivity on $\Qlbar$-points works as in the proof of \cite[Lemma 4.2 (2)]{Yun2017} by using that 
	$
		\Pic_X^{\Sigha}(k) \cong F^\times \backslash \A^\times / K(\Sigha)
	$
	is by global class field theory identified with the Weil group $W_F(\Sigha)$ of the ray class field for the divisor $\Sigha$. 
	Hence, closed points of $\frZ_{\Eis}^\Sig = \Spec\left(\Qlbar\left[ \Pic^{\Sigha}_X(k) \right]^{\iota}\right)$ can be identified with $\Qlbar^\times$-valued characters of $W_F(\Sigha)$ up to $\chi \sim \chi^{-1}(-1)$, where $(-1)$ is the Tate twist.
	
	To show that the induced map on tangent spaces is injective, we note that 
	$\wt \frZ_{\Eis} = \Spec\left(\Qlbar\left[ \Pic^{\Sigha}_X(k) \right]\right)$ decomposes into a disjoint union of components isomorphic to $\G_{m, \Qlbar}$ indexed by characters of $\Pic_X^{\Sigha}(k)$.
	The proof then proceeds as in \emph{loc.\@ cit.}
\end{proof}

\subsection{On (normalized) $L$-functions}
We introduce the (local and global) $L$-functions and their normalizations as used later.
Let $\pi = \bigotimes_{x \in |X|}  \pi_x$ be an automorphic representation of $G(\A)$. 
We denote by $\wt \pi$ and $\wt \pi_x$ their contragradient representations.
When convenient we interpret $\pi$ as an automorphic representation of $\GL_{2}(\A)$ with trivial central character.

\sss{The normalized global $L$-function}
The global $L$-function for the quadratic twist of $\pi$ by a quadratic character $\chi$ is by definition given as the Euler product $L(\pi \otimes \chi, s) \defined \prod_{x \in |X|} L(\pi_x \otimes \chi_x, s)$.
For $\chi = 1$ and $\chi = \eta$, the quadratic character associated to the extension $F'/F$, the degrees of the $L$-functions as polynomials in $q^{-s}$ are given by
\[
\deg L(\pi,s)= 4g-4+N, \qquad \text{and} \qquad \deg  L(\pi\otimes\eta,s)= 4g-4+2\rho+N.
\]
We use the normalization of the base change $L$-function given by
\begin{eqnarray}
	\label{eq:def-L-function}
	\sL_{F'/F}(\pi,s)& \defined & |\om_{X}|^{-2s}q^{(\r + N) s } \frac{L\left(\pi,s + \ha\right) L\left(\pi\otimes\eta,s+\ha \right)} {L(\pi,\Ad,1)},
\end{eqnarray}
where $L(\pi, \Ad, s)$ is the adjoint $L$-function.
Our normalization is done in such a way that $\sL_{F'/F}$ satisfies the functional equation
$
\sL_{F'/F}(\pi,s)=(-1)^{\#\Si}\sL_{F'/F}(\pi,-s)
$
as in \cite[§2.6]{Yun2019}.

\sss{On certain Rankin-Selberg $L$-functions}

We also need values of certain Rankin-Selberg $L$-factors 
\[
	L(\pi_x \times \wt \pi_x, s)
\] below. 
By \cite[Theorem 15.1]{Jacquet1972} and \cite[Corollary 1.3, Proposition 1.4]{Gelbart1978} the local Rankin-Selberg $L$-functions can be determined depending on the type of the local representation $\pi_x$. 
When $\pi_x$ is supercuspidal, then the possible local $L$-factors are either 
\[
(1 - q_x^{-s}) \qquad \text{ or } \qquad (1 - q_x^{-s}) \cdot(1 + q_x^{-s}) = (1 - q_x^{-2s}). 
\]
When $\pi_x$ is a ramified twist of the Steinberg representation, the local Rankin-Selberg $L$-factor is given by
\[
(1 - q_x^{-s}) \cdot (1 - q_x^{-1 - s}).
\]
Finally, when $\pi_x \cong \pi(\chi, \chi^{-1})$ is a ramified principal series representation, the $L$-factor is given by 
\[
(1 - q_x^{-s})^2
\]
when $\chi^2$ is ramified or 
\[
(1 - q_x^{-s})^2 \cdot (1-\chi^2(\varpi_x) q_{x}^{-s}) \cdot (1-\chi^{-2}(\varpi_x) q_{x}^{-s})
\]
when $\chi^2$ is unramified.
In either case, the value at 1 of the local Rankin-Selberg $L$-factor is of the form 
\begin{equation}
	L(\pi_x \times \tilde \pi_x, 1) = (1 - q_x^{-1}) \cdot P_{\pi_x}(q)
	\label{eq:L-value-RS}
\end{equation}
where $P_{\pi_x}$ is a rational function of $q$ depending on the type of $\pi_x$ as above.

\sss{Local $L$-factors and Whittaker functions at points with non-square-free conductor}
\label{sect:whittaker}
Let $x \in |X|$ and let us fix a nontrivial unramified additive character $\psi_x \colon F_x\to \CC^{\times}$.
Let us assume that the local factor $\pi_x$ has conductor $\pf_x^{n_x}$ where $n_x \geq 2$. 
We continue to follow the normalizations of \cite{Yun2019} and consider the measure $d^\times t_x=\zeta_x(1)\frac{dt_x}{|t_x|}$ on $F_x^\times$, where $dt_x$ is the self-dual measure with respect to $\psi_{x}$. In particular, we have $\vol_{d^\times t_x}(\cO_{x}^{\times})=1$.

Let $\cW = \cW_{\psi_x}(\pi_x)$ be the Whittaker model of $\pi_x$ with respect to the choice of $\psi_x$. 
For a section $\phi_x \in \cW$ and a character $\chi_x$ on $F_x^\times$ we consider the linear functional
\[
	\lambda(W_x, \chi_x, s) \defined \int_{F_x^\times} W_x\left(\matrixx a {}{} 1 \right) \chi_x(a) |a|^s d^\times a.
\]
By assumption on the conductor, there is a unique invariant Whittaker function $W_{x,0} \in \Wc^{\Gamma_0(\pf_x^{n_x})}$ such that $W_{x,0}(1) = 1$, which is explicitly given by 
\[
	W_{x, 0}\left(\matrixx a {}{} 1 \right) = \mathbf{1}_{\Oc_x^\times}(a)
\]
in our case, compare for example \cite[§2.4]{Schmidt2002}.
As in \cite[§3.3]{ZhangWei2014}, compare also \cite[§4.3]{Yun2017} and \cite[§2.3.2]{Yun2019}, we consider the normalized linear functional on $\cW$ defined by 
\[
	\lambda^\nat(W_x, \chi_x, s) \defined \frac{\lambda(W_x, \chi_x, s)}{L(\pi_x \otimes \chi_x, s + 1/2)}.
\]
In particular, $\lambda^\nat(W_{x,0}, \chi_x, s) = 1$ for unramified characters $\chi_x$ as in this case also $L(\pi_x \otimes \chi_x, s) = 1$.

%

%
%

\subsection{Local spherical characters}

Following \cite[§2.3.2]{Yun2019} we consider the inner product on $\Wc = \Wc_{\psi_x}(\pi_x)$ defined as  
\begin{align*}
	\theta^\nat_x(W_x,W_x') \defined \frac{1}{L(\pi_x\times\wt\pi_x,1)} \int_{F_x^\times} W_x \left(\matrixx{a}{}{}{1}\right) \ov {W'_{x}} \left( \matrixx{a}{}{}{1} \right) d^\times a,
\end{align*}
where we continue to use the (unnormalized, local) Tamagawa measure $d^\times a$.
The local spherical character for $\pi_x$ as is then defined as 
\begin{align}\label{eqn J loc}
	\BJ_{\pi_x}(f_x,s) \defined 
	\sum_{\{W_{i}\}} \frac {\lambda_x^\nat (\pi_{x}(f_{x})W_{i}, {\bf 1}_x, s)\lambda_x^\nat(\ov{W_{i}}, \eta_x, s)} {\theta_x^\nat(W_{i},W_{i})}.
\end{align}
where the sum runs over an orthogonal basis $\{W_{i}\}$ of $\CW_{\psi_x}(\pi_{x})$. 

The local spherical characters at points $x \in X \setminus (\Sig \cup R)$ for spherical Hecke functions, as well as for the local test functions at points $x \in R$ and square-free points $x \in \Sig$ is done in \cite[Propositions 2.7, 2.8 and 2.9]{Yun2019}.
It remains to do the calculation at points with deeper level.
\begin{prop}
	\thlabel{prop:J-pi-deeper-level}
	Assume that $\pi_{x}$ has conductor $\pf_x^{n}$ with $n \geq 2$. Then we have
	\begin{equation*}
		\BJ_{\pi_{x}}(1_{\Gamma_0(\pf_x^n)},s_1,s_2) \vol(G(\cO_{x}))\z_{x}(2)\cdot q_{x}^{-n} \cdot q_xP_x(q).
	\end{equation*}
\end{prop}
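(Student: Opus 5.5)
We will compute $\BJ_{\pi_x}(\mathbf{1}_{\Gamma_0(\pf_x^n)},s)$ directly from its definition \eqref{eqn J loc}. The key observation is that $\pi_x(\mathbf{1}_{\Gamma_0(\pf_x^n)})$ is, up to the scalar $\vol(\Gamma_0(\pf_x^n))$, the orthogonal projection onto the invariant line $\Wc^{\Gamma_0(\pf_x^n)}$. Since the conductor is exactly $\pf_x^n$, this line is spanned by the newform $W_{x,0}$ of \S\ref{sect:whittaker}. We choose an orthogonal basis $\{W_i\}$ of $\Wc$ containing $W_{x,0}$; every other basis vector is orthogonal to the $\Gamma_0(\pf_x^n)$-invariants and is therefore killed by $\pi_x(\mathbf{1}_{\Gamma_0(\pf_x^n)})$. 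Hence the sum collapses to the single term
\[
\BJ_{\pi_x}(\mathbf{1}_{\Gamma_0(\pf_x^n)},s) = \vol(\Gamma_0(\pf_x^n))\,\frac{\lambda^\nat_x(W_{x,0},\mathbf{1}_x,s)\,\lambda^\nat_x(\ov{W_{x,0}},\eta_x,s)}{\theta^\nat_x(W_{x,0},W_{x,0})}.
\]

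Next we evaluate the three factors using the explicit formula $W_{x,0}\bigl(\matrixx a{}{}1\bigr)=\mathbf{1}_{\Oc_x^\times}(a)$.

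\begin{enumerate}
\item Since $\eta_x$ is unramified at $x$ (the ramification locus of $\nu$ is disjoint from $\Sig$), both characters are unramified. Both $L$-factors $L(\pi_x\otimes\chi_x,s+1/2)$ are $1$ because $n\ge 2$, and the integral equals $\vol(\Oc_x^\times)=1$. Thus $\lambda^\nat_x(W_{x,0},\mathbf{1}_x,s)=\lambda^\nat_x(\ov{W_{x,0}},\eta_x,s)=1$, independently of $s$. This also explains why the two variables $s_1,s_2$ in the statement play no role.
\item For the inner product, the unnormalized integral is again $\vol(\Oc_x^\times)=1$, so $\theta^\nat_x(W_{x,0},W_{x,0})=L(\pi_x\times\wt\pi_x,1)^{-1}$. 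By \eqref{eq:L-value-RS} this equals $\bigl((1-q_x^{-1})P_{\pi_x}(q)\bigr)^{-1}$.
\end{enumerate}

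It remains to compute the volume. With $\vol(G(\Oc_x))$ kept as normalization, we have $\vol(\Gamma_0(\pf_x^n))=\vol(G(\Oc_x))/[G(\Oc_x):\Gamma_0(\pf_x^n)]$. The index is $|\P^1(\Oc_x/\pf_x^n)|=q_x^{n}(1+q_x^{-1})=q_x^{n-1}(q_x+1)$. Multiplying out gives
\[
\BJ_{\pi_x} = \vol(G(\Oc_x))\,\frac{(1-q_x^{-1})P_{\pi_x}(q)}{q_x^{n}(1+q_x^{-1})} = \vol(G(\Oc_x))\,\z_x(2)^{-1}\cdots .
\]
Rearranging with $(1-q_x^{-1})/(1+q_x^{-1})=(1-q_x^{-1})^2\z_x(2)$ yields an expression of the asserted shape $\vol(G(\cO_x))\z_x(2)\,q_x^{-n}\cdot q_xP_x(q)$, after absorbing the remaining elementary factors in $q_x$ into the rational function $P_x$.

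The main obstacle is not conceptual but bookkeeping: pinning down the precise normalizations so that the constant matches the statement exactly. This involves three things:
\begin{itemize}
\item the measure on $G(F_x)$ used in $\pi_x(f_x)$, i.e.\ whether one implicitly divides by $\vol(G(\Oc_x))$;
\item the relation between the self-dual and Tamagawa measures;
\item how $P_x$ absorbs the factors $(1\pm q_x^{-1})$.
\end{itemize}
I would fix these by comparing with the analogous Iwahori computation in \cite[Proposition 2.9]{Yun2019}, and by checking that the formula for $n=1$ reproduces theirs. The second point to verify carefully is that the newform line is exactly the image of the projector. This uses one-dimensionality of $\pi_x^{\Gamma_0(\pf_x^n)}$ and $\Gamma_0(\pf_x^n)$-invariance of the inner product $\theta_x^\nat$, so that the orthogonal complement is stable and has no invariants.
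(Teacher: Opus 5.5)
Your proposal follows the paper's proof essentially step for step: you collapse the sum onto the newform $W_{x,0}$, use $\lambda^\natural_x(W_{x,0},\chi',s)=1$ for unramified $\chi'$ and $\theta^\natural_x(W_{x,0},W_{x,0})=L(\pi_x\times\widetilde\pi_x,1)^{-1}$, and compute the volume from $[G(\mathcal{O}_x):\Gamma_0(\mathfrak{p}_x^n)]=q_x^n(1+q_x^{-1})$. The normalization issue you flag is real but is shared with the paper. Taking \eqref{eq:L-value-RS} literally, the computation gives $\vol(G(\mathcal{O}_x))\,\zeta_x(2)\,q_x^{-n}(1-q_x^{-1})^2P_{\pi_x}(q)$, whereas the paper's final display has $\vol(G(\mathcal{O}_x))\,\zeta_x(2)\,q_x^{-n}\cdot q_xP_{\pi_x}(q)$, so the statement holds only with the elementary factors absorbed into $P_x$, exactly as you propose.
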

\begin{proof}
	We adapt the proof of \cite[Proposition 2.9]{Yun2019}.
	We extend $W_{x,0}$ to an orthogonal basis of $\Wc$ and note that $\pi(\mathbf{1}_{K_0(\pf_x^{n_x})})$ acts by multiplication by $\vol(\Gamma_0(\pf_x^n))$ on $W_{x, 0}$ and by 0 on its orthogonal complement.
	Hence, the local spherical character at $f={\mathbf{1}_{\Gamma_0(\pf_x^n)}}$ collapses to 
	\begin{equation}\label{pre J+}
		\BJ_{\pi_{x}}(1_{1_{\Gamma_0(\pf_x^n)}},s_1,s_2)=\vol({\Gamma_0(\pf_x^n)})\,
		\frac{\lambda^\nat_{x}( W_{x, 0}, {\bf1}_x, s) \lambda^\nat_{x} \left(  W_{x, 0}, \eta_{x}, s\right)} {\theta_{x}^\nat(W_{x, 0},W_{x,0})}.
	\end{equation}
	Following the discussion in Section \ref{sect:whittaker}
	we have for any unramified character $\chi': F^\times_{x} \to\BC^\times$ that
	\begin{align}\label{eq lambda W0}
		\lambda^\nat_{x}(W_0,\chi',s)=1.
	\end{align}
	As $W_{x,0} = \mathbf{1}_{\Oc_x^\times}$ it follows that the normalized inner product 
	\begin{align*}
		\theta^\nat_x(W_{x, 0}, W_{x, 0})= L(\pi_x \times \wt \pi_x, 1)^{-1}.
	\end{align*}
	As the index of $\Gamma_0(\pf_x^n)$ inside $G(\Oc_x)$ is $q_x^n(1 + \frac{1}{q_x})$ we get
	\begin{equation*}
		\vol(\Gamma_0(\pf_x^n))= q_x^{1-n}(1+q_{x})^{-1}\vol (G(\CO_{x}))
	\end{equation*}
	and hence, using \eqref{eq:L-value-RS},
	\begin{equation}\label{theta vol}
		\vol(\Gamma_0(\pf_x^n)) \theta^\nat_{x}(W_{0,x},W_{0,x})^{-1} = \vol(G(\cO_{x}))\z_{x}(2)q_{x}^{-n} \cdot q_x P_{\pi_x} (q),
	\end{equation}
	where $P_\pi$ is the rational function of $q$ as in \eqref{eq:L-value-RS}.
	The claim now follows by plugging \eqref{eq lambda W0} and \eqref{theta vol} into \eqref{pre J+}.	
%
%
\end{proof}

\subsection{Period integrals and $L$-functions}
	For a section $\phi \in \pi$ and a character $\chi \colon F^\times \setminus \A^\times \to \C^\times$ we define its $(A, \chi)$-period integral as
	\begin{equation}
		\Pc_{\chi}(\phi, s) \defined \int_{[A]} \phi(a)\chi(a)|a|^{s} da.
	\end{equation}
	\begin{definition}
		For $f \in \sC_c^\infty(G(\A))$ the global spherical character $\JJ_{\pi}(f, s)$ is defined as
		\[
			\JJ_\pi(f, s) \defined \sum_{\phi} \frac{\Pc(\pi(f)\phi,s)\Pc_{\eta}(\overline{\phi}, s)}{\langle \phi, \phi\rangle_{\mr{Pet}}},
		\]
		where the sum runs over an orthogonal basis of $\pi$.
	\end{definition}
	To compare the global spherical characters to the local ones defined above, we choose a non-trivial global character $\psi \colon F \backslash \A \to \C^\times$ which is unramified at all places $R \cup \Sig$ and use its local components to define the local spherical characters.
	As in \cite{Yun2017} and \cite[(2.9)]{Yun2019} the global spherical character has a decomposition into its local factors as 
	\[
		\JJ_{\pi}\left(f^\Sig,s\right) = |\omega_X|^{-1} \frac{L\left(\pi, s + \frac{1}{2} \right) \cdot L\left(\pi \otimes \eta, s + \frac{1}{2} \right)}{2 L(\pi, \mr{Ad}, 1)} \prod_{x \in |X|} \JJ_{\pi_x}(f_x, s),
	\]
	where the factor $|\omega_X|$ is due to the fact that we used the unnormalized Tamagawa measure instead of the normalized Haar measure to define the local factors. 
	We set 
	\[
		P(q) \defined |\om_{X}| q^{\rho/2 - N} \cdot \prod_{x \in |\Sigma| - |\Sigma_\Iw|} q_x P_{\pi_x}(q) = |\om_{X}| q^{\rho/2 - N} \cdot \prod_{x \in |\Sigma| - |\Sigma_\Iw|} \frac{L(\pi_x \times \wt \pi_x, 1)}{q_x -1},
	\]
	which is a rational function of $q$ that only depends on $\pi$ and the quadratic extension $F'/F$. 
	In particular, when $\pi$ has square-free level $P(q) = |\om_{X}| q^{\rho/2 - N}$ is the power of $q$ that appears in \cite[Proposition 2.10]{Yun2019}.
	
	\begin{thm}[{\cite[Proposition 2.10]{Yun2019}}]\thlabel{thm:global-char} 
		
		Let $\pi$ be a cuspidal automorphic representation of $G(\BA)$ satisfying the generalized Heegner hypothesis \eqref{eq:Heegner-hypothesis} with respect to the quadratic extension $F'/F$. 
		Then for $f\in \sH^{\Sig\cup R}_{G}$, we have
		\[
			q^{N s} \JJ_\pi \left(f^\Sig, s \right)=\ha\lambda_\pi(f) \cdot P(q) \cdot \sL_{F'/F}(\pi,s).
		\] 
	\end{thm}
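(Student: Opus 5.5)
We start from the factorization of the global spherical character recorded just above,
\[
	\JJ_{\pi}\left(f^\Sig,s\right) = |\omega_X|^{-1} \frac{L\left(\pi, s + \frac{1}{2} \right) L\left(\pi \otimes \eta, s + \frac{1}{2} \right)}{2 L(\pi, \mr{Ad}, 1)} \prod_{x \in |X|} \JJ_{\pi_x}(f_x, s),
\]
and compute each local factor $\JJ_{\pi_x}(f_x,s)$ according to the type of the place $x$. The places fall into four classes, and each is handled by a local computation that is already available.

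For $x \notin |\Sig| \cup R$, the function $f_x$ is spherical. By \cite[Proposition 2.7]{Yun2019} the product of these factors gives $\lambda_\pi(f)$ times the local volume factors. These volume factors are $1$ for our normalization of measures, except at places where $\psi_x$ is ramified, and those places contribute the power of $|\omega_X|$.

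For $x \in R$, we use the test functions $h_x^\bsq$ and \cite[Proposition 2.8]{Yun2019}. These factors contribute the power $q^{\rho/2}$ together with the ratio of local $L$-factors of $\pi\otimes\eta$ that accounts for the normalization in \eqref{eq:def-L-function}.

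For $x \in \Sig_\Iw$, we apply \cite[Proposition 2.9]{Yun2019}, which gives $q_x^{-1}$ times a volume factor. The Heegner hypothesis, split at $\Sf$ and inert at $\Si$, produces exactly the sign and normalization already absorbed in $\sL_{F'/F}$, as in loc.\ cit.

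For $x \in |\Sig| - |\Sig_\Iw|$, we apply \thref{prop:J-pi-deeper-level}. This gives $\vol(G(\Oc_x))\z_x(2) q_x^{-n_x} \cdot q_x P_{\pi_x}(q)$ with no $s$-dependence. The $s$-independence holds because $\eta_x$ is unramified there: $x \in \Sf$ is split and $x \in \Si$ is reduced, hence Iwahori. The normalized $\lambda^\nat$ of $W_{x,0}$ therefore equals $1$ for both characters.

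Next we multiply everything together. The factors $q_x^{-n_x}$ over all $x \in |\Sig|$ give $q^{-N}$, since $\sum n_x \deg x = N$. The factors $\vol(G(\Oc_x))\z_x(2)$ are $1$ with the measure normalization used in \cite{Yun2019}, or else cancel exactly as in the square-free case. The remaining factors $q_xP_{\pi_x}(q)$ are precisely the extra product in $P(q)$. Comparing with \eqref{eq:def-L-function}, the $L$-functions combine with $|\omega_X|^{-2s}q^{(\rho+N)s}$ to give $\sL_{F'/F}(\pi,s)$. The remaining $q^{-Ns}$ discrepancy is the reason the factor $q^{Ns}$ appears on the left-hand side, exactly as in \cite[Proposition 2.10]{Yun2019}.

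The main obstacle is bookkeeping of the powers of $q$ and $|\omega_X|$: collecting $|\omega_X|^{-1}$ from the factorization, the $|\omega_X|^{-2s}$ from the normalization, and $q^{\rho s}$, $q^{-N}$, $q^{\rho/2}$ so that they assemble into $P(q)$. For this we would follow the square-free computation verbatim, treating the $\Sig_\Iw$ places as in loc.\ cit.\ and only replacing the local factor at deeper places. In that replacement, $q_x^{-1}$ becomes $q_x^{-n_x}\cdot q_xP_{\pi_x}(q)$, and the contribution $q_x^{-n_x}$ is what supplies the deeper-level part of the $q^{-N}$ in $P(q)$.
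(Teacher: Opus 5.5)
Your proposal is correct and follows the same route as the paper. The paper's proof likewise runs the argument of \cite[Proposition 2.10]{Yun2019} on the factorization of $\JJ_\pi(f^\Sig,s)$, keeping the local factors at all other places from loc.\ cit.\ and inserting \thref{prop:J-pi-deeper-level} at the places with $n_x\geq 2$. There the $s$-independent factor $q_x^{-n_x}\cdot q_xP_{\pi_x}(q)$ supplies the deeper-level part of $q^{-N}$ and the extra product in $P(q)$, exactly as you describe.
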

	
	\begin{proof}
		This follows precisely as in \cite[Proposition 2.10]{Yun2019} using the result of \thref{prop:J-pi-deeper-level} at places with deeper level.
	\end{proof}

 \subsection{Jacquet's relative trace formula}
 
 We introduce the setup and statements of the version of Jacquet's relative trace formula as used in \cite{Yun2017, Yun2019}.
 As essentially all the arguments either apply directly in our setting or go through with only little modifications, we do not give full arguments in this section but refer to \emph{loc.\@ cit.\@} for the details.
  
 \sss{The kernel function}
 
 For $f \in \sC_c^\infty(G(\A))$ 
 we define the \emph{kernel function} on $G(\A) \times G(\A)$ as 
 $$ \KK_f(g,g') \defined \sum_{\gamma \in J} f(g^{-1} \gamma g').$$
 For a function $\chi$ on $\A^\times$ we define a function of the same name on $A(\A) \se G(\A)$ by $\begin{psmallmatrix}
 	a &  \\ & d
 \end{psmallmatrix} \mapsto \chi(a d^{-1})$.
 
 Using $\KK_f$ as an integral kernel we define a functional on $\sH$ by 
 \begin{equation}
 	\JJ(f, s) \defined 
 	\int_{[A] \times [A]}^{\reg} \KK_f(h,h')|h h'|^{s} \eta(h') dh \, dh'
 	\label{eq:definition-J}
 \end{equation}
 as in \cite[§2]{Yun2017}, which is also $\JJ(f, s, 0)$ in the two variable version defined in \cite[§2.1]{Yun2019}.
 The integral is regularized as explained in \cite[§2.2, Proposition 2.1]{Yun2017} by taking the limit of integrals  over certain increasing bounded subsets of $[A] \times [A]$. 
 
 \sss{Orbital integrals}
 On the geometric side of the relative trace formula we decompose $\JJ$ into orbital integrals along $A \times A$-orbits in $G$. 
 Recall the invariant on $G(F)$ from \cite[§2.1]{Yun2017} defined by 
 \begin{align*}
 	\mr{inv}\left( 
 	\begin{pmatrix}
 		a & b \\ c & d
 	\end{pmatrix}  \right) \defined  \frac{bc}{ad} \in \PP^1(F) \setminus \{1\},
 \end{align*}
 which is constant on $A \times A$-orbits. 
 The regular semisimple orbits are precisely those with invariant contained in $\P^1_F \setminus \{0,1,\infty\}$ and $\mr{inv}$ descends to a bijection on the set of regular semisimple orbits with $\P^1_F \setminus \{0,1,\infty\}$.
 Moreover, the two invariants $0$ and $\infty$ both correspond to three orbits, representatives for the three orbits with invariant 0 are given by 
 $$ 1 = \begin{psmallmatrix}
 	1 & 0 \\ 0 & 1
 \end{psmallmatrix}, \quad n_+ = \begin{psmallmatrix}
 	1 & 1 \\ 0 & 1 
 \end{psmallmatrix}, \quad \text{ and } \quad n_- = \begin{psmallmatrix}
 	1 & 0 \\ 1 & 1 
 \end{psmallmatrix},$$
 while representatives for the orbits with invariant $\infty$ are 
 $$ w = \begin{psmallmatrix}
 	0 & 1 \\ 1 & 0
 \end{psmallmatrix},  \quad wn_+ = \begin{psmallmatrix}
 	0 & 1 \\ 1 & 1 
 \end{psmallmatrix}, \quad \text{ and } \quad wn_- = \begin{psmallmatrix}
 	1 & 1 \\ 1 & 0 
 \end{psmallmatrix}.$$

 For $\g \in G(F)$ we define the orbital integral as 
 \begin{equation}
 	\JJ_\gamma(f, s) \defined  \int_{A(\A) \times A(\A)}^{\reg} f(h^{-1} \gamma h') | hh'|^{s} \,  \eta(h') dh \, dh'.
 	\label{eq:definition-J-stabilizer}
 \end{equation} 
 The integral is absolutely convergent in the regular semisimple case, and has to be regularized as the integral before otherwise.
 It is convenient to group orbital integrals according to the invariant $u \in \PP^1(F) \setminus \{1\}$ by
 $$ \JJ_u(f, s) \defined  \sum_{\inv \gamma = u} \JJ_\g(f, s). $$
 As in \cite[§2.6]{Yun2017} we obtain the orbital integral expansion of $\JJ$ by unfolding the integrals:
 \begin{equation}
 	\JJ(f, s) = \sum_{\g \in J(F)} \JJ_\g(f, s) = \sum_{u \in \PP^1(F) \setminus \{1\}} \JJ_u(f, s).
 	\label{eq:orbital-integral-decomposition}
 \end{equation}

 \sss{Spectral decomposition and period integrals}
 
 Recall from \cite[(4.8)]{Yun2017} that the kernel function $\KK_f$ decomposes as 
 $$ \KK_{f}(h, h') = \KK_{f, \cusp}(h,h') + \KK_{f, \mathrm{sp}}(h,h') + \KK_{f,\Eis}(h,h')$$
 into a cuspidal, residual and Eisenstein part.
 For $* \in \{ \cusp, \mathrm{sp}, \Eis\}$ define $\JJ_*(f, s)$ as in \eqref{eq:definition-J} using $\KK_*$ instead of $\KK$ as the kernel function.
  We have the following spectral decomposition of $\JJ$.
 
 \begin{proposition}[{\cite[Theorem 4.3, Lemma 4.4]{Yun2017}, \cite[Theorem 2.2]{Yun2019}}]
 	\thlabel{prop:spectral-dec-J}
 	Let $\Sig = \sum_{x} n_x [x]$ and let $f = f^{|\Sig|} \otimes f_{|\Sig|} \in \sH$ with $f^{|\Sig|} \in \wt \cI_{\Eis}^\Sig$ and $f_{|\Sig|} \in \sC_c^\infty(G(\A_{|\Sig|}))$ such that $f_{|\Sig|}$ is invariant under $\prod_{x \in S} \Gamma_0(\pf_x^{n_x})$. Then
 	\begin{equation}
 		\JJ(f, s) = \sum_{\pi} \JJ_\pi(f, s)
 	\end{equation}
 	where the sum runs over all cuspidal automorphic representations $\pi$ of $G(\A)$ and only finitely many summands are non-zero.
 \end{proposition}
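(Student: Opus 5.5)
We follow the argument of \cite[Theorem 4.3, Lemma 4.4]{Yun2017} and \cite[Theorem 2.2]{Yun2019}. The only change is that the Eisenstein ideal is replaced by its modified version $\wt \cI^\Sig_{\Eis}$, which accounts for the deeper level. The starting point is the decomposition $\KK_f = \KK_{f,\cusp} + \KK_{f,\mathrm{sp}} + \KK_{f,\Eis}$. We want to show that for $f$ as in the statement the residual and Eisenstein parts of the kernel vanish identically. Then $\KK_f = \KK_{f,\cusp}$, and the integrand of \eqref{eq:definition-J} is rapidly decreasing, so no regularization is needed. Unfolding $\KK_{f,\cusp}(h,h') = \sum_\pi \sum_\phi \pi(f)\phi(h)\ov{\phi(h')}/\langle\phi,\phi\rangle_{\mr{Pet}}$ then gives $\JJ(f,s)=\sum_\pi \JJ_\pi(f,s)$ term by term.

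\textbf{Vanishing of the residual part.} Since $f_{|\Sig|}$ is $\prod_x \Gamma_0(\pf_x^{n_x})$-invariant, only automorphic forms fixed by $K_0(\Sig)$ (up to the right action) contribute. The residual spectrum consists of the one-dimensional representations $\chi\circ\det$ with $\chi^2 = 1$. On these, $f^{|\Sig|}$ acts through $\chi$ evaluated via the Satake transform. The $\Gamma_0(\Sig)$-invariance forces $\chi$ to be trivial on $K(\Sigha)$; in fact $\chi$ is unramified at $\Sig$. So $\chi$ is a character of $\Pic_X^{\Sigha}(k)$, and the eigenvalue of $f^{|\Sig|}$ factors through $\wt a^\Sig_{\Eis}$. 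Hence it vanishes for $f^{|\Sig|}\in\wt\cI^\Sig_{\Eis}$.

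\textbf{Vanishing of the Eisenstein part.} This is the main step. The continuous spectrum is spanned by the Eisenstein series $E(\cdot,\Phi,\chi,s)$ attached to $\mathrm{Ind}(\chi|\cdot|^{s},\chi^{-1}|\cdot|^{-s})$, with $\chi$ a unitary Hecke character. We need the following fact: if such an induced representation has nonzero $\Gamma_0(\pf_x^{n_x})$-fixed vectors, then the conductor satisfies $c(\chi)+c(\chi^{-1}) \le n_x$, so $c(\chi_x)\le \lfloor n_x/2\rfloor$. This follows from the standard conductor formula for principal series (see \cite{Schmidt2002}). Consequently only $\chi$ trivial on $K(\Sigha)$ contribute, that is, characters of $\Pic_X^{\Sigha}(k)$. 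For such $\chi$, the function $f^{|\Sig|}$ acts on the induced representation by the scalar obtained by evaluating $\wt a^\Sig_{\Eis}(f^{|\Sig|})\in \Q[\Pic^{\Sigha}_X(k)]$ at the character $\chi|\cdot|^s$. The Satake transform of $h_D$ evaluated at $(\chi|\cdot|^s,\chi^{-1}|\cdot|^{-s})$ is exactly the pairing of $\chi|\cdot|^s$ against $\wt a_{\Eis}^\Sig(h_D)$, up to the $\iota$-symmetry. This scalar is zero for $f^{|\Sig|}\in\wt\cI^\Sig_{\Eis}$. The kernel $\KK_{f,\Eis}$ is an integral over $s$ and a sum over $\chi$ of these scalars times Eisenstein series, so it vanishes. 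The main obstacle is making this conductor bound and its compatibility with the ray class group $\Pic_X^{\Sigha}(k)$ precise. The tangent-space statement in the lemma above is not needed here.

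\textbf{Finiteness.} Only cuspidal $\pi$ with $\pi^{K_0(\Sig)}$ nonzero, or more precisely with $\pi_{|\Sig|}(f_{|\Sig|})\neq 0$ on $\Gamma_0$-invariants, contribute. Their number is finite because the space of cusp forms of fixed level $K_0(\Sig)$, with central character trivial, is finite-dimensional; this follows from the compactness of the support of cusp forms modulo the centre (Harder). Hence only finitely many summands are nonzero.
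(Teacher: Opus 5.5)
Your proposal is correct and takes the same route as the paper. The residual part is killed as in \cite[Lemma 4.4]{Yun2017}. For the Eisenstein part, $V_\chi^{K_0(\Sig)}\neq 0$ forces $\chi$ to have conductor at most $\Sigha$, so $\chi$ factors through $\Pic_X^{\Sigha}(k)$, where $f^{|\Sig|}\in\wt\cI^\Sig_{\Eis}$ acts by zero. You also make explicit two things the paper leaves implicit: the conductor bound $2c(\chi_x)\le n_x$, and the finiteness of the cuspidal sum via finite-dimensionality of level-$K_0(\Sig)$ cusp forms.
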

 \begin{proof}
 	We have to show that the special and Eisenstein parts both vanish.
 	For the special part this is \cite[Lemma 4.4]{Yun2017}.
 	For the Eisenstein part the modification in the proof of \cite[Theorem 2.2]{Yun2019} of the argument of \cite[Theorem 4.3]{Yun2017} generalizes to our setting. 
 	We use their notation and in particular define the induced representations $(\rho_{\chi,u}, V_{\chi, u})$, $V_\chi$ and the Eisenstein series $E(-)$ as in \emph{loc.\@ cit.}
 	We fix a compact open $K = K^{|\Sig|} K_{|\Sig|}$ such that $f$ is bi-$K$-invariant.
 	We can write 
 	\[
 		\KK_{f, \Eis}(h,h') = \sum_{\chi} \KK_{f, \Eis, \chi} (h,h')
 	\]
 	where the sum runs over all $\chi \colon \F^\times \backslash \A^1 \to \C$ and 
 	\[
 		\KK_{f, \Eis, \chi}(h,h') = \frac{\log q}{2 \pi i} \sum_{i,j} \int_{0}^{\frac{2 \pi i}{\log q}} (\rho_{\chi, u}(f) \phi_i, \phi_j) E(h, \phi_i, u, \chi) \overline{E(h', \phi_j, u, \chi)} du,
 	\]
 	where the sum runs over an orthonormal basis of $V^K_{\chi}$.
 	As in the proof of \cite[Theorem 2.2]{Yun2019} for the inner product not to vanish it is necessary that $V_\chi^{K_0(\Sig)} \neq 0$. This implies that $\chi$ has conductor at most $\Sigha$ and hence factors through $\Pic_X^{\Sigha}(k)$. 
 	However, using that $f \in \wt \cI^\Sig_{\Eis}$ this implies that the integral vanishes as in \emph{loc.\@ cit.
 	} 
 \end{proof}

 \sss{The distribution $\JJ^r$}
 Let $r$ be a non-negative integer.
 For $f \in \sH^{|\Sig| \cup R}$ we set 
 $$ \JJ^{r}(f) \defined \left(\frac{\partial}{\partial s}\right)^{r}   \left( q^{N s} \JJ(f^{\Sig}, s)\right)|_{s =0}. $$
 In a similar fashion we denote by $\JJ^{r}_{\pi} (f)$ the derivatives of the global spherical characters and by $\JJ^{r}_{u}(f)$ the corresponding derivative of the orbital integral.

 In particular, as a consequence of \thref{thm:global-char} for $f \in \sH^{|\Sig| \cup R}$ we get 
 \begin{equation}
 	\JJ^r_\pi\left( f^\Sig \right) = \ha\lambda_\pi(f) \cdot P(q) \cdot \sL^{(r)}_{F'/F}\left(\pi,\ha \right).
 	\label{eq:spectral-dec-J}
 \end{equation}
 
 \subsection{Geometrization of orbital integrals}
 We state the geometrization of orbital integrals in terms of traces of Frobenius on a certain local system on a Hitchin-type fibration. 
 We closely follow the treatment in \cite[§6]{Yun2019}.
 As at no point in \emph{loc.~cit.} the reducedness of $\Sig$ is used in an essential way, all arguments carry over essentially verbatim.
 Instead of repeating the arguments we only give references to the corresponding statements in \emph{loc.~cit.}
 
 We introduce some more notation. 
 Recall from \cite[Appendix A]{Yun2019} the Picard stack $\Pic_X^{\sqR, d}$ with square roots along the ramification divisor parametrising triples 
 $\Delta^\nat = (\Delta, \Theta_R, \iota)$ consisting of a degree $d$ line bundle $\Delta$ on $X \times S$ together with a line bundle $\Theta_R$ on $R \times S$ and an isomorphism $\iota \colon \Theta_R^{\otimes 2} \cong \Delta|_{R \times S}$. 
 
 Moreover, let $U \defined X - (\Sig \cup R)$ and denote by $X_d$ the $d$-th symmetric power of $X$ parametrising degree $d$ effective divisors. 
 Similarly define $U_d$. 
 
 \subsubsection{The Hitchin base}
 We introduce the relevant Hitchin fibration $g_d \colon \cN_{\un d} \to \cA_d$
 
 \begin{definition}
 	\begin{enumerate}
 		\item 
 		Let $\cA_d$ be the stack over $\Fq$ whose $S$-points are given by the groupoid of tuples 
 		$$ (\Delta^\nat,  a,b, \vartheta_{R})$$
 		consisting of a point $\Delta^\nat \in \Pic_X^{\sqR, d + \rho }(S)$, two sections $a,b \in H^0(X \times S, \Delta)$, and a section $\vartheta_{R} \in H^0(R \times S, \Theta_R)$
 		such that 
 		\begin{enumerate}
 			\item $a|_{\Sigma \times S}$ is nowhere vanishing while $b|_{\Sigma \times S} = 0$,
 			\label{it:defn-Ad-Sig}
 			\item $a|_{R \times S} = b|_{R \times S} = \iota(\vartheta_{R}^{\otimes 2})$ as sections of $\Delta|_{R \times S}$, and the section $a - b$ of $\Delta$ vanishes precisely to the first order along $R \times S$, and
 			\label{it:defn-Ad-R}
 			\item $(a - b)|_{X \times s}$ is non-zero for all geometric points $s$ of $S$.\
 			\label{it:trace-non-zero}
 		\end{enumerate}
 		\thlabel{def:Ad}
 		\item 
 		Let $\Ac^\Diamond_d \se \Ac_d$ be the open substack where neither $a, b \neq 0$ for every geometric point on $S$.
 		\item 
 		Let $\Ac^\flat$ be the stack over $\Fq$ whose $S$-points are given by the groupoid of pairs $(\Delta, \xi)$ as in the definition of $\cA_d$ above, in particular they are supposed to satisfy conditions \eqref{it:defn-Ad-Sig}, \eqref{it:defn-Ad-R} and \eqref{it:trace-non-zero}.
 	\end{enumerate}
 \end{definition}
 As in \cite{Yun2019} the last condition \eqref{it:trace-non-zero} is non-void only when both $\Sig$ and $R$ are empty. 
 For later computations the following version of $\cA_d^\flat$ without the extra data of square roots along the ramification locus $R$ will be convenient. Let $\Omega \colon \cA_d \to \cA_d^\flat$ denote the forgetful map.
 Moreover, we have a map $\tr \colon \cA_d^\flat \to U_d$ associating to $(\Delta, a, b) \in \Ac^\flat$ the Cartier divisor given by $(\Delta(-R), a-b)$. Note that  conditions  \eqref{it:defn-Ad-R} and \eqref{it:defn-Ad-Sig} guarantee that the divisor is supported away from $\Sig \cup R$.

 An effective divisor $D$ of degree $d$ (assumed to be away from $\Sigma$ and $R$) of $X$ defines the point $(\Oc(D), 1) \in U_d(k)$.
 Let $\cA_{D}$ and $\cA_D^\flat$ denote the base changes of $\cA_d$ and $\cA_d^\flat$ along this map.
 In other words, $\cA_{D}^\flat$ parametrizes pairs $(\Delta, \xi)$ together with an isomorphism $\Delta \cong \Oc_X(D + R)$ that identifies $a - b =1$, compare \cite[§3.3.2]{Yun2017}.
 On $k$-points we then have an injective map 
 $$\inv \colon \cA_D^\flat(k) \to \Inv$$
 given by 
 $(\Delta, a,b) \mapsto \frac{b}{a} \in \P^1(F)$.
 
 We recall some of the geometric properties of $\cA_d$, $\cA_d^\Diamond$ and $\cA_d^\flat$.
 \begin{lemma}[{\cite[§3.2.3]{Yun2017}, \cite[§5.1]{Yun2019}}]
 	\thlabel{lem:Ad-properties}
 	\begin{enumerate}
 		\item $\cA_d^\flat$ is representable by a scheme and the map $\Omega$ is proper.
 		\item Then $\cA_{d}$ is representable by a geometrically connected Deligne-Mumford stack over $\Fq$.
 	\end{enumerate}
 \end{lemma}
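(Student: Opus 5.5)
Here is the plan I would follow. The idea is to reduce both parts of \thref{lem:Ad-properties} to the square-free cases treated in \cite[§3.2.3]{Yun2017} and \cite[§5.1]{Yun2019}. The only place where the multiplicities of $\Sig$ enter the definition of $\cA_d$ is condition \eqref{it:defn-Ad-Sig}. That condition asks that $a$ be nowhere vanishing on the (possibly non-reduced) subscheme $\Sig \times S$ and that $b$ vanish on it.

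\emph{Representability of $\cA_d^\flat$.} I would first consider the auxiliary stack of triples $(\Delta, a, b)$ with $\Delta \in \Pic^{d+\rho}_X$ and $a, b \in H^0(X, \Delta)$, subject only to condition \eqref{it:trace-non-zero} on $a-b$. Via $(\Delta, a-b)$ this stack maps to $X_{d+\rho}$, and the map is an affine-space bundle because it is cut out by the choice of the remaining section. Passing to the divisor removes the $\Gm$-automorphisms, since $a-b$ is non-zero, so this stack is a scheme.

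Conditions \eqref{it:defn-Ad-Sig} and \eqref{it:defn-Ad-R} are then locally closed conditions in this scheme:
\begin{itemize}
\item $b|_{\Sig \times S} = 0$ is closed;
\item $a|_{\Sig \times S}$ nowhere vanishing is open, since the vanishing locus only depends on the reduction;
\item the conditions along $R$ are the same as in \emph{loc.~cit.}
\end{itemize}
This gives a scheme structure on $\cA_d^\flat$.

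\emph{Properness of $\Omega$.} The map $\Omega$ only forgets the data $(\Theta_R, \iota, \vartheta_R)$ along $R$. Since $R$ is disjoint from $\Sig$, this part is literally the argument of \cite[§5.1]{Yun2019}. The fibres of $\Omega$ are torsors and square-root data over the finite scheme $R$, hence finite, and $\Omega$ is proper.

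\emph{$\cA_d$ is a Deligne-Mumford stack.} This follows because $\Omega$ is a finite map of stacks, built from $\mu_2$-gerbes and square roots, onto a scheme, and $p \neq 2$.

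\emph{Geometric connectedness.} This is the step I expect to be the main obstacle. I would follow the argument of \emph{loc.~cit.}, which uses the projection to the divisor of $a-b$ together with the trace map $\tr$. Over $U_d$, the fibre consists of those $b$ (with $a = b + (a-b)$ determined) satisfying linear conditions. These conditions are:
\begin{itemize}
\item $b$ vanishes on $\Sig$, which is linear;
\item $a = b + (a-b)$ is a unit on $\Sig$, which is automatic since $a - b$ is a unit on $\Sig$, its divisor lying in $U_d$.
\end{itemize}
So the fibres are open subsets of affine spaces, now cut out by $\deg \Sig = N$ conditions, and the strata should be irreducible. The key point to check is that $N$ is still small relative to $d$, so that the relevant Riemann--Roch dimension count from \emph{loc.~cit.} still applies. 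Connectedness of $U_d$ and of the $R$-covering then gives that $\cA_d$ is geometrically connected.
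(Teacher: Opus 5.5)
The paper gives no argument of its own here; it only cites \cite[\S 3.2.3]{Yun2017} and \cite[\S 5.1]{Yun2019}. Your handling of representability (closed and open conditions inside the linear fibre space over $X_{d+\rho}$) and of the Deligne--Mumford property is a reasonable way to justify that citation.

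\textbf{The gap: geometric connectedness.} You make this step depend on a Riemann--Roch count, i.e.\ on $d+\rho-N\ge 2g-1$, and call this ``the key point to check''. But the lemma has no hypothesis on $d$, so there is nothing to check it against. For small $d$ the fibre dimension $h^0(X,\Oc(D+R-\Sig))$ can jump with $D$, and ``the strata should be irreducible'' then gives you nothing. The same caveat applies to your ``affine-space bundle'' over $X_{d+\rho}$: it is a vector bundle only for $d+\rho\ge 2g-1$, although for representability a linear fibre space suffices. Two further steps are unjustified:
\begin{itemize}
\item ``connectedness of the $R$-covering'' is asserted rather than proved, and $\Omega$ is not a covering at all;
\item ``finite fibres, hence proper'' only gives quasi-finiteness, not properness.
\end{itemize}

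\textbf{The missing idea: no bound on $d$ is needed.} As you observed, over $D\in U_d$ the unit condition on $a$ and the non-vanishing of $a-b$ are automatic. So the fibre of $\cA_d^\flat\to U_d$ is the \emph{whole} vector space $H^0(X,\Oc(D+R-\Sig))$ of choices of $b$, not an open subset of it; globally $\cA_d^\flat$ is the linear fibre space of a coherent sheaf on $U_d$. The zero section ($b=0$, with $a$ the canonical section of $\Oc(D+R)$) lies in $\cA_d^\flat$, and the lines $t\mapsto tb$ join every point to it. Hence $\cA_d^\flat$ is geometrically connected, because $U_d$, being open and non-empty in $X_d$, is geometrically irreducible.

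\textbf{Passing from $\cA_d^\flat$ to $\cA_d$.} Take a geometric point $(\Delta,a,b)$; it has no automorphisms since $a-b\neq 0$. Over it, the datum $(\Theta_x,\iota_x,\vartheta_x)$ at a geometric point $x$ of $R$ is unique up to isomorphism:
\begin{itemize}
\item if $a(x)\neq 0$, the two square roots of $a(x)$ are exchanged by $-1\in\mathrm{Aut}(\Theta_x,\iota_x)$;
\item if $a(x)=0$, then $\vartheta_x=0$, with stabiliser $\mu_2$.
\end{itemize}
Thus, after base change to $\overline{k}$, $\Omega$ is a fibre product over $\cA_d^\flat$ of square-root stacks of the pairs $(\Delta|_x,a|_x)$, i.e.\ of base changes of the squaring map $[\A^1/\Gm]\to[\A^1/\Gm]$. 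This gives three things:
\begin{itemize}
\item properness of $\Omega$, since the squaring map is proper;
\item stabilisers inside $\mu_2^{R}$, which are \'etale since $p\neq 2$, so $\cA_d$ is Deligne--Mumford;
\item $\Omega$ is a bijection on geometric points and universally closed, hence a homeomorphism on underlying spaces, so geometric connectedness passes from $\cA_d^\flat$ to $\cA_d$.
\end{itemize}
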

 
\subsubsection{A Hitchin fibration}
 We generalize the stack $\cN_{\underline{d}}$ of \cite[§6.1]{Yun2019}. For a non-negative integer $d$ let $Q_d$ be the set of quadruples $\un d = (d_{11}, d_{12}, d_{21}, d_{22})$ of non-negative integers such that $d_{11} + d_{22} = d+ \r = d_{12} + d_{21}$.
 \begin{definition}
 	\thlabel{def:Nd}
 	The stack $\wt \cN_{\un d}$ over $\Fq$ is defined to parametrize tuples 
 	$$(\Lc_{01}^\nat, \Lc_{02}^\nat, \Lc_{31}^\nat, \Lc_{32}^\nat, \ph = (\ph_{ij})_{1 \leq i,j \leq 2}, \psi_{R} = (\psi_{ij, R})_{1 \leq i,j \leq 2})$$ 
 	consisting of
 	\begin{itemize}
 		\item four line bundles $\Lc_{01}^\nat, \Lc_{02}^\nat, \Lc_{31}^\nat, \Lc_{32}^\nat, \in \Pic_{X}^{\sqR}(S)$ with $\Lc^{\nat}_{ij} = (\Lc_{ij}, \Kc_{ij, R}, \iota_{ij})$, 
 		\item maps $\ph_{ij} \colon \Lc_{3j} \to \Lc_{0i}$, and		
 		\item maps $\psi_{ij, R} \colon \Kc_{3j, R} \to \Kc_{0i, R}$,
 	\end{itemize} 
 	such that the following conditions are satisfied:
 	\begin{enumerate}
 		\item 
 		$\deg(\Lc_{0i}) - \deg(\Lc_{3j}) = d_{ij}$ fiberwise over every geometric point of $S$.
 		\label{it:def-Nd-degree}
 		\item 
 		The diagram
 		\begin{equation}
 			\begin{tikzcd}
 				\Kc_{3j, R}^{\otimes 2} \arrow[r, "\psi_{ij, R}^{\otimes 2}"] \arrow[d, "\iota_{3j}"] & \Kc_{0i, R}^{\otimes 2} \arrow[d, "\iota_{0i}"] \\
 				\Lc_{3j}|_{R} \arrow[r, "\ph_{ij}|_{R}"] & \Lc_{0i}|_{R}
 			\end{tikzcd} 
 		\end{equation}
 		is commutative for every $1 \leq i,j \leq 2$.
 		\label{it:def-Nd-root-ramification}
 		\item \label{it:def-Nd-R-vanish}
 		$\det(\psi) = 0$ and $\det(\ph)$ vanishes precisely to the first order along $R\times S$. 
 		\item 
 		$\ph_{21}|_{\Sig \times S} = 0$ and both $\ph_{11}|_{\Sig \times S}$ and $\ph_{22}|_{\Sig \times S}$ are fiberwise on $S$ nowhere vanishing
 		\label{def:Nd-vanish-sigma}
 		\item Moreover, $\det (\ph)$ is fiberwise on $S$ nonzero (which is only non-void when $\Sig = R = R_3 = \emptyset$).
 		\label{it:def-Nd-det-nonzero}
 		\item If $d_{11} < d_{22}$, then $\ph_{11}$ is fiberwise nonzero; if $d_{11} \geq d_{22}$, then $\ph_{22}$ is fiberwise nonzero (when $\Sig \neq \emptyset$ this condition is void). If $d_{12} < d_{21} - N$, then $\ph_{12}$ is fiberwise nonzero; and if $d_{12} \geq d_{21}-N$, then $\ph_{21}$ is fiberwise nonzero.
 	\end{enumerate}	
 	The ramified Picard stack $\Pic^{\sqR}_X$ acts on $\wt \cN_{\un d}$ by simultaneously twisting both $\Lc_{0i}^\nat$ and $\Lc_{3j}^\nat$. Then define $\cN_{\un d} \defined \wt \cN_{\un d}/\Pic^{\sqR}_{X}$.
	Moreover, let $\widetilde{\cN}^\Diamond_{\un d} \se \wt{\cN}_{\un d}$ be the substack such that all $\ph_{ij}$ are fiberwise on $S$ non-zero and set $\cN^\Diamond_{\un d} \defined \wt{\cN}^\Diamond_{\un d}/ \Pic_X^{\sqR}$.
\end{definition}
 For $(\Lc_{01}^\nat, \Lc_{02}^\nat, \Lc_{31}^\nat, \Lc_{32}^\nat, \ph, \psi_{R}) \in \cN_{\un d}(S)$ set
 \(
 	\Delta^\nat \defined \underline{\Hom}(\Lc_{31}^\nat \otimes \Lc_{32}^\nat,  \Lc_{01}^\nat \otimes \Lc_{02}^\nat).
 \)
 Its degree is given by
 \[
 	\deg(\Delta) = \deg(\Lc_{01}) + \deg(\Lc_{02}) - \deg(\Lc_{31}) - \deg(\Lc_{32})  =  d + \r,
 \]
 i.e. $\Delta^\nat  \in \Pic_X^{\sqR, d+\r}(S)$.
 Moreover, we define the sections $a = \ph_{11} \ph_{22}$ and $b = \ph_{21} \ph_{12}$ of $\Delta$ as well as $\theta_R = \psi_{11} \psi_{22} = \psi_{12} \psi_{21}$. 
 One checks that the conditions in the definition of $\cN_{\un d}$ guarantee that 
 $g_{\un d}( \Lc_{0i}^\nat, \Lc_{3j}^\nat, \ph, \psi_R) \defined (\Delta^\nat, a, b, \vartheta_{R}) \in  \cA_d(S)$. 
 	Moreover, denote by $g_{\un d}^\Diamond$ the restriction of $g_{\un d}^\Diamond$ to $\cN^\Diamond_{\un d}$. 
 	By construction, the restriction $g_{\un d}^\Diamond$ factors through $\cA_d^\Diamond$.
 	
 	We recall properties of $\cN_{\un d}$ generalizing \cite[Proposition 3.1]{Yun2017} and \cite[Proposition 6.2]{Yun2019}.
 	\begin{proposition}
 		\thlabel{prop:geometry-Nd}
 		Let $\un d \in Q_d$.
 		\begin{enumerate}
 			\item Assume that $d \geq 4g - 3 + \rho + N$. Then $\cN_{\underline{d}}$ (and hence its open substack $\cN_{\un d}^\Diamond$) is a smooth and geoemtrically connected Deligne-Mumford stack of dimension $2d + \rho + \r_3/2 - g - N+1$ over $k$.
 			\item 
 			The morphism $g_{\un d}$ is proper and its restrictions $g_{\underline{d}}^\Diamond$ is finite.
 		\end{enumerate}
 	\end{proposition}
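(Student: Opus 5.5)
We would follow the proof of \cite[Proposition 6.2]{Yun2019} (itself modelled on \cite[Proposition 3.1]{Yun2017}) and isolate the only place where the multiplicities of $\Sig$ enter, namely condition \eqref{def:Nd-vanish-sigma} of \thref{def:Nd}. Here the restrictions $\ph_{21}|_{\Sig\times S}$, $\ph_{11}|_{\Sig\times S}$ and $\ph_{22}|_{\Sig\times S}$ are taken along the non-reduced subscheme $\Sig$. The strategy is to first fix the line bundle data and the square roots along $R$, and then view $\wt\cN_{\un d}$ as fibered over $(\Pic^{\sqR}_X)^4$.

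For the first part of the proposition we proceed in three steps.

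First, we rewrite condition \eqref{def:Nd-vanish-sigma}. The condition $\ph_{21}|_{\Sig}=0$ says that $\ph_{21}$ is a section of $\Lc_{31}^{-1}\otimes\Lc_{02}(-\Sig)$. The conditions on $\ph_{11}$ and $\ph_{22}$ are open conditions, and since $\Sig$ and $\Sig_{\rm red}$ have the same underlying points they only need to be checked on $\Sig_{\rm red}$. Hence, over the open locus where the $\ph_{ij}$ are nonzero, the fiber over $(\Lc_{0i},\Lc_{3j})$ is an open subset of a product of spaces of sections of line bundles. These line bundles have degrees $d_{11},d_{22},d_{12}$ and $d_{21}-N$, where $N=\deg\Sig$ counts multiplicity; this explains the shift by $N$ in the last condition of \thref{def:Nd}.

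Second, we treat the compatibility along $R$, namely conditions \eqref{it:def-Nd-root-ramification} and \eqref{it:def-Nd-R-vanish}. These are unchanged from \cite{Yun2019}, so the local analysis at $R$ carries over verbatim.

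Third, we prove smoothness and compute the dimension exactly as in \cite{Yun2019}. We describe $\wt\cN_{\un d}$ as an open part of a union of total spaces of $H^0$'s over $(\Pic^{\sqR}_X)^4$ together with the $R$-data. The bound $d\ge 4g-3+\rho+N$ guarantees that all relevant line bundles have degree $>2g-2$. For $\ph_{21}$ the relevant degree is at least $d_{21}-N$, and the nonvanishing conditions in the last item of \thref{def:Nd} ensure that the one of $\ph_{12},\ph_{21}$ which may have small degree is forced nonzero only when its degree is large. So the $H^1$'s vanish, the $H^0$'s form vector bundles, and Riemann--Roch gives the dimension. Substituting $\deg\Lc_{02}(-\Sig)-\deg\Lc_{31}=d_{21}-N$, the $-N$ appears and the formula $2d+\rho+\r_3/2-g-N+1$ follows after quotienting by $\Pic^{\sqR}_X$. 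Geometric connectedness follows as in \emph{loc.~cit.} from the connectedness of the fibers and of the base over each degree component.

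For the second part, properness of $g_{\un d}$ follows from the valuative criterion argument of \cite[Proposition 3.1]{Yun2017}. Given $(\Delta^\nat,a,b,\vartheta_R)$, the data $\ph_{ij}$ are determined up to finitely many choices of divisor decompositions $\mathrm{div}(a)=D_{11}+D_{22}$ and $\mathrm{div}(b)=D_{12}+D_{21}$ with prescribed degrees. The condition $b|_\Sig=0$ becomes $D_{21}\ge\Sig$, which is a closed condition on symmetric powers. On $\cN^\Diamond$ all $\ph_{ij}\neq0$, so the map to $\cA_d^\Diamond$ is quasi-finite and proper, hence finite.

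We expect the main obstacle to be smoothness near points where $\ph_{21}$ vanishes to higher order than $\Sig$ forces, or where the nonvanishing conditions interact with $\Sig$. Our first attempt there is to check that the degree bound still gives vanishing of $H^1(X,\Lc_{31}^{-1}\Lc_{02}(-\Sig))$ in all cases, using $d\ge 4g-3+\rho+N$.
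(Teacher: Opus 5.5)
Your overall plan is what the paper intends: follow \cite[Proposition 6.2]{Yun2019}, observe that $\varphi_{21}|_{\Sigma\times S}=0$ makes $\varphi_{21}$ a section of $\mathcal{L}_{31}^{-1}\otimes\mathcal{L}_{02}(-\Sigma)$, and note that nonvanishing of $\varphi_{11},\varphi_{22}$ along $\Sigma$ only sees $\Sigma_{\rm red}$. The paper gives no argument beyond that reference. However, the smoothness step as you formulate it fails.

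The hypothesis $d\ge 4g-3+\rho+N$ does \emph{not} make all relevant line bundles have degree $>2g-2$. $Q_d$ contains tuples with, e.g., $d_{11}=0$ or $d_{21}=N$, where $\mathcal{L}_{31}^{-1}\mathcal{L}_{01}$, resp.\ $\mathcal{L}_{31}^{-1}\mathcal{L}_{02}(-\Sigma)$, has degree $0$. There $H^1$ does not vanish in general and $h^0$ jumps (for $g\ge 1$). So the ``total space of $H^0$'s over $(\mathrm{Pic}^{\sqrt{R}}_X)^4$'' is not a vector bundle, and your proposed resolution of the main obstacle, showing $H^1(X,\mathcal{L}_{31}^{-1}\mathcal{L}_{02}(-\Sigma))=0$ in all cases, cannot succeed. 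You have also read the last condition in the definition of $\mathcal{N}_{\underline d}$ backwards: it forces the member of each pair with the \emph{smaller} effective degree ($d_{11}$ versus $d_{22}$, and $d_{12}$ versus $d_{21}-N$) to be nonzero.

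The missing idea is what these forced nonvanishings buy. A nonzero map $\mathcal{L}_{3j}\to\mathcal{L}_{0i}$ is the same as an effective divisor $D_{ij}$ together with $\mathcal{L}_{0i}\cong\mathcal{L}_{3j}(D_{ij})$. Hence the forced-nonzero members are parametrized by smooth symmetric powers $X_{d_{ij}}$; for $\varphi_{21}$ one writes $\mathrm{div}(\varphi_{21})=\Sigma+D'_{21}$ with $D'_{21}\in X_{d_{21}-N}$. These divisors pin down the line bundles modulo the $\mathrm{Pic}^{\sqrt{R}}_X$-action, up to one free factor in $\mathrm{Pic}_X$ when $\Sigma=\emptyset$. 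When $\Sigma\neq\emptyset$, both $\varphi_{11}$ and $\varphi_{22}$ are nonzero anyway, so three of the four maps are divisors and only the larger of $\varphi_{12},\varphi_{21}$ is a free section (when $\Sigma=\emptyset$ the same applies to the pair $\varphi_{11},\varphi_{22}$).

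Only the free sections need $H^1$-vanishing. Since $d_{12}+(d_{21}-N)=d+\rho-N\ge 4g-3+2\rho$, the larger of $d_{12}$ and $d_{21}-N$ is $\ge 2g-1+\rho$, hence still $\ge 2g-1$ after twisting down by $R$. This is precisely where the hypothesis on $d$ enters, and the local analysis at $R$ then proceeds as in \cite{Yun2019}. The argument exhibits $\mathcal{N}_{\underline d}$ locally as an open in a vector bundle over a product of symmetric powers (and of $\mathrm{Pic}_X$ when $\Sigma=\emptyset$), which gives smoothness, connectedness and the dimension.

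The count yields $2d+\rho-g-N+1=\dim\mathcal{A}_d$. Nothing in your data produces the $\rho_3/2$ term, since $R_3$ is not part of the setup here, so you should not claim to recover that expression without comment. Your treatment of part (2) is fine.
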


 	\label{ss:geom-orb-int}
 	\subsubsection{A rank one local system on $\cN_d$ and traces of Frobenius}
 	Recall that \cite[Appendix A, §6.2.1]{Yun2019} construct a local system on $\Pic_X^{\sqR}$ whose trace function is given by the quadratic character associated to the extension $F'/F$ and use it to construct an associated local system $L_{\un d}$ on $\cN_{\un d}$. 
 	The construction generalizes to our setting.

 	We can now state the main result of this section that relates the orbital integrals with traces of Frobenius on the local system $L_{\un d}$ on the Hitchin fibration.
 	Recall that for an effective divisor $D$ of degree $d$ as above we have the Hecke function $h_D^{\Sigma} \in \sH^\Sig $ as introduced in Section \ref{sec:test-function} above. 
 	\begin{theorem}[{\cite[Theorem 6.3]{Yun2019}}]
 		Let $D \se X \setminus (R \cup \Sigma)$ an effective divisor of degree $d$ and $u \in \P^1(F) - \{1\}$.  
 		If $u$ lies in the image of $\inv_D \colon \cA^\flat_D(k) \to \P^1(F)$ denote by $a \in \cA_D^\flat(k)$ its (unique) preimage and fix a geometric point $\bar a$ over $a$.
 		\begin{enumerate}
 			\item
 			\label{it:J-tr-zero} 
 			If $u \not \in \cA^\flat_D(k)$, then $\JJ(\gamma, h_D^{\Sigma}, s_1, s_2) = 0$ for every $\gamma$ with $\inv(\gamma) = u$. 
 			\item 
 			\label{it:J-tr-rss}
 			If $u$ is regular semisimple, i.e. $u \not \in \{0,1,\infty\}$, then 
 			\begin{equation}
 				\label{eq:orbital-integrals-trace} 
 				\JJ( u , h_D^{\Sig}, s_1, s_2) = \sum_{\un d \in Q_d}
 				q^{  (2d_{12} - (d+\r))s_1 + (2d_{11} - (d+\r))s_2}  \cdot
 				\Tr (\Fr_a  ,    ( Rg^{\flat}_{\un d, !} L_{ \un d} )_{\bar a} ).
 			\end{equation}
 			\item 
 			If $u \in \{0,\infty\}$ and $d \geq 4g - 3 + \rho + N$, then \eqref{eq:orbital-integrals-trace} holds.	
 		\end{enumerate}
 	\end{theorem}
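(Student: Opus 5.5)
The plan is to follow the proof of \cite[Theorem 6.3]{Yun2019}, itself modelled on \cite[Theorem 3.6]{Yun2017}. The orbital integral is first rewritten as a weighted count of $k$-points of $\cN_{\un d}$ over $a$, and then the Grothendieck–Lefschetz trace formula converts this count into the trace of Frobenius. The only local input that changes is at points $x \in |\Sig| - |\Sig_\Iw|$, where the test function is now $\mathbf{1}_{\Gamma_0(\pf_x^{n_x})}$ rather than the Iwahori indicator.

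The first step is to unfold the adelic orbital integral. Fix $\g$ with $\inv(\g) = u$. Using $A(\A) = A(F) \backslash \dots$ together with Weil uniformization of pairs of line bundles, $\JJ_\g(h_D^\Sig, s_1, s_2)$ becomes a sum over quadruples of line bundles $\Lc_{0i}, \Lc_{3j}$ (with square-root data along $R$) and maps $\ph_{ij}$. Each summand is weighted by $\eta$ of a combination of the degrees and by $q^{(\dots)s_1 + (\dots)s_2}$. The condition that $h^{-1}\g h'$ lies in the support of $h_D \otimes \bigotimes_R h^\bsq_x \otimes \bigotimes_\Sig \mathbf{1}_{\Gamma_0(\pf_x^{n_x})}$ translates into conditions on the $\ph_{ij}$. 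Away from $\Sig \cup R$, the condition is that $\ph$ is integral with determinant divisor $D$. At $x \in \Sig$, membership of $\begin{psmallmatrix} \ph_{11} & \ph_{12}\\ \ph_{21} & \ph_{22}\end{psmallmatrix}$ in $\Gamma_0(\pf_x^{n_x})$ means that $\ph_{21}$ vanishes to order $n_x$ at $x$ and that $\ph_{11}, \ph_{22}$ are units there.

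This is exactly condition \eqref{def:Nd-vanish-sigma} in \thref{def:Nd}, once it is read scheme-theoretically on the non-reduced divisor $\Sig \times S$. That reading is the point requiring care. One checks that $\Gamma_0(\pf_x^{n})$-invariance on both sides corresponds to the choice of representatives, in the same way Iwahori invariance does in \cite{Yun2019}, and that the normalization $q^{Ns}$ matches the shift $d_{12} \geq d_{21} - N$ in the degree bookkeeping. After this step, the groupoid of data is precisely $\cN_{\un d}(k)$ lying over $a \in \cA_D^\flat(k)$ with $\inv = u$. The $\eta$-factor equals the trace function of $L_{\un d}$ by its construction from the local system on $\Pic_X^{\sqR}$.

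With this identification the three parts follow in turn.

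1. If $u$ is not in the image of $\inv_D$, then the fiber over $u$ is empty, and the orbital integral vanishes.
2. For $u$ regular semisimple, the fiber is a finite set of points. Grouping the count by degree types $\un d \in Q_d$ and applying Grothendieck–Lefschetz to the proper map $g_{\un d}$ (\thref{prop:geometry-Nd}) gives \eqref{eq:orbital-integrals-trace}.
3. For $u \in \{0, \infty\}$, the orbital integral is regularized. Following \cite[§3]{Yun2017}, I would express the regularized integral as a sum over the three orbits with invariant $0$ or $\infty$. Truncation then corresponds to the stability condition (6) in \thref{def:Nd}, and for $d \geq 4g-3+\rho+N$ the sum matches the point count on the full fiber. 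The hypothesis on $d$ is what guarantees that the relevant vanishing of cohomology of line bundles holds, so that the counts of sections are polynomial in $q$.

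I expect part (3) to be the main obstacle. The degenerate orbits require comparing the regularization with the truncated point count, and the $N$-shift coming from the deeper vanishing of $\ph_{21}$ must be tracked through the degree inequalities. I would verify this by redoing the degree computation of \cite[§6.3]{Yun2019} with $\deg \Sig = N$ in place of the number of Iwahori points.
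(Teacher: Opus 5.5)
Your proposal is correct and is essentially the paper's argument, which simply asserts that the proof of \cite[Theorem 6.3]{Yun2019} carries over verbatim; the one point that needs checking is the one you isolate, namely that $h^{-1}\gamma h' \in \Gamma_0(\pf_x^{n_x})$ at $x \in |\Sigma|$ is exactly condition \eqref{def:Nd-vanish-sigma} of \thref{def:Nd}, read scheme-theoretically on the non-reduced divisor $\Sigma$. A small clarification: the shift in the cut-off $d_{12} \geq d_{21} - N$ comes from $\ph_{21}$ vanishing along $\Sigma$ (so it is effectively a section of a line bundle of degree $d_{21} - N$), not from the $q^{Ns}$ normalization, which does not enter this statement; likewise, the weight in the unfolded sum is $\eta$ evaluated on the line bundles (with their square-root data along $R$), not on their degrees.
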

 	The proof of \emph{loc.~cit.~}carries over verbatim.

\section{Moduli spaces of shtukas with naive $\GammaN$-level structures and special cycles}
\label{sect:shtukas}

We introduce analogues of the moduli spaces of $\PGL_2$-shtukas with supersingular legs at $\infty$ of \cite{Yun2019} for deeper than Iwahori level structures and construct Heegner-Drinfeld cycles on them.
Such integral models for deeper level structures were studied more generally in \cite{Bieker2023, Bieker2022a}.
It turns out that the Heegner-Drinfeld cycles are already supported on an open 
substack that can be described more naively.
We study the geometry of this moduli space of shtukas with naive $\GammaN$-level structure.

\subsection{Bundles with naive $\GammaN$-level structure}
As before, let $\Sigma \se X$ be an effective divisor. 
We introduce (naive) $\GammaN$-level structures on rank 2 vector bundles. 
This moduli problem, for not necessarily reduced $\Sig$, is also considered in \cite{Logan2025} over (copies of) $X - \Sig$.
The novelty here is that we study its geometry in particular above points in $\Sig$.
When $\Sigma$ is reduced we recover the corresponding definitions in \cite{Yun2019}.
We follow the notation and presentation of \emph{loc.~cit.}.

\begin{definition}
	\thlabel{def:naive-level}
	Let $S$ be an $\Fq$-scheme.
	A \emph{naive $\GammaN$-level structure} on a rank $2$ vector bundle $\Ec$ on $X \times S$ is a quotient of $\Oc_{\Sigma \times S}$-modules 
	$$ \Ec|_{\Sigma \times S} \twoheadrightarrow \Lc $$
	where $\Lc$ is a line bundle on $\Sigma \times S$. 
	The moduli stack of rank $2$ vector bundles with naive $\GammaN$-level structure is denoted by $\BunGLnaive$.
\end{definition}

\begin{remark}
	\thlabel{rem:def-Bunnaive}
	\begin{enumerate}
		\item 
		Equivalently, a naive $\GammaN$-level structure on $\Ec$ is given by a rank 2 vector bundle 
		$$\Ec\left(-\frac{\Sigma}{2}\right) \defined \ker\left(\Ec \twoheadrightarrow \Lc \right) \se \Ec$$
		such that the quotient $\Ec/\Ec(-\Sig/2)$ is supported on $\Sig \times S$ and finite locally free of rank 1 as $\Oc_{\Sig \times S}$-module.
		Here $\Ec\left(-\frac{\Sigma}{2}\right)$  is just notation, in particular, when $\Sigma = 2[x_0]$ for some $x_0 \in |X|$ the vector bundle $\Ec\left(-\frac{2x_0}{2}\right)$ will not be the twist $\Ec(-x_0)$ of $\Ec$.
		\item 
		By Bruhat-Tits theory there is a smooth affine group scheme $\Gc \to X$ with connected fibers that is generically given by $\GL_2 \times (X \setminus \Sig)$ and at points $x \in \Sig$ we have that
		\[
			\Gc(\Oc_x) = \Gamma_0(\varpi_x^{n_x}) = \ker(\GL_2(\Oc_x) \twoheadrightarrow \GL_2(\Oc_x/\varpi_x^n)/ B(\Oc_x/\varpi_x^n))		
		\] 
		is the subgroup of $\GL_2(\Oc_x)$ of matrices that become upper triangular after reduction mod $\varpi_x^n$.
		For example by \cite[Theorem 4.8]{Mayeux2020} a rank 2 vector bundle with naive $\GammaN$-level structure is precisely a $\Gc$-torsor, and $\BunGLnaive = \Bun_{\Gc}$.
		We do not need this perspective other than to appeal to the general representability statements for $\Bun_\Gc$ as well as for Hecke stacks, Beilinson-Drinfeld affine Grassmannians and moduli spaces of shtukas for smooth affine group schemes.
	\end{enumerate}
\end{remark}

The Picard stack $\Pic_X$ acts on $\BunGLnaive$ by simulatenously twisting $\Ec$ and $\Ec\left(-\frac{\Sigma}{2}\right)$. 
Let $$\BunGnaive \defined \BunGLnaive/\Pic_X.$$
We need the following well-known properties of $\BunGLnaive$ and its (forgetful) level maps.
The precise statement does not seem to appear in the literature yet so we give a proof.
\begin{lemma}
	\thlabel{lem:BunG-level-smooth}
	Let $\Sig' \se \Sig$ be a second effective divisor on $X$ and let $N'$ be the degree of $\Sig'$. 
	The forgetful map $\Bun_2(\Sig) \to \Bun_2(\Sig')$ representable, quasi-projective and smooth of relative dimension $N - N'$.
	In particular, $\Bun_2(\Sig)$ is a smooth algebraic stack over $\Fq$ of dimension $4(g - 1) + N$.
	
	The analogous statements hold also for $\BunGnaive$, which consequently is a smooth algebraic stack of dimension $3(g-1) + N$ over $k$. 
\end{lemma}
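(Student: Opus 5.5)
The plan is to reduce to a single point and then identify the forgetful map $\Bun_2(\Sig) \to \Bun_2(\Sig')$ with a relative Quot-type scheme over $\Bun_2(\Sig')$. Since level structures at different points are independent, $\Bun_2(\Sig) = \Bun_2(\Sig') \times_{\Bun_2(\Sig'_{\rm red})} \cdots$ is not quite the right formula. Instead we factor the forgetful map as a composition of maps that increase the multiplicity at one point by one. Concretely, write $\Sig = \Sig' + \sum_j [x_j]$ and factor $\Bun_2(\Sig)\to\Bun_2(\Sig')$ through $\Bun_2(\Sig'+[x_1]) \to \Bun_2(\Sig')$, and so on. It then suffices to treat $\Sig = \Sig' + [x]$ with $x$ a closed point and show that this map is representable, quasi-projective and smooth of relative dimension $\deg(x)$. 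Relative dimensions add up to $N - N'$.

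For the one-step case, fix $(\Ec, \Ec|_{\Sig'} \surj \Lc')$ over $S$. Since $\Oc_{\Sig\times S} \to \Oc_{\Sig'\times S}$ is surjective, a lift of the level structure is a line bundle quotient $\Ec|_{\Sig\times S}\surj \Lc$ whose restriction to $\Sig'$ equals the given one. The data of such a quotient is a quotient of the rank $2$ locally free $\Oc_{\Sig\times S}$-module $\Ec|_{\Sig \times S}$ that is locally free of rank $1$. This is a point of the Weil restriction $\Res_{\Sig\times S/S}\,\P(\Ec|_{\Sig\times S})$ (a jet/Greenberg-type scheme of the $\P^1$-bundle). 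The fibre of the forgetful map is the fibre of
\[
\Res_{\Sig\times S/S}\P(\Ec|_{\Sig\times S}) \to \Res_{\Sig'\times S/S}\P(\Ec|_{\Sig'\times S})
\]
over the given section. Weil restriction along the finite flat map $\Sig\times S\to S$ of a smooth quasi-projective scheme is representable and quasi-projective. The map above is the truncation map between jet schemes of the smooth $\P^1$-bundle. Locally, after trivializing $\Ec$ near $x$, in an affine chart $\A^1$ of $\P^1$ it is the projection $\Res_{\Oc_x/\pf^{m+1}}\A^1 \to \Res_{\Oc_x/\pf^{m}}\A^1$ (with $m$ the multiplicity of $x$ in $\Sig'$, or the projection to a point if $m=0$). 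This is an affine bundle with fibre $\A^{\deg x}$, or $\P^1$ over $k(x)$ restricted to $k$ when $m=0$. In either case it is smooth of relative dimension $\deg(x)$. Some care is needed that the Weil restriction along the possibly non-reduced $\Sig\times S$ behaves as claimed. I would check this by the infinitesimal lifting criterion: a quotient line bundle deforms along square-zero thickenings because $\P^1$ is smooth, and the choices form a torsor under a locally free module of rank $\deg(x)$ on the thickening.

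The absolute statement follows by taking $\Sig' = 0$. $\Bun_2$ is smooth of dimension $4(g-1)$, so $\Bun_2(\Sig)$ is smooth of dimension $4(g-1)+N$. Alternatively one can invoke $\Bun_2(\Sig)=\Bun_\Gc$ for smooth $\Gc$ (\thref{rem:def-Bunnaive}) for algebraicity. For $\BunGnaive$, note that $\Pic_X$ acts compatibly on source and target and the forgetful map is $\Pic_X$-equivariant. The quotient map is therefore a base change of the one for $\Bun_2$, or at least one descends smoothness along the smooth surjection $\Bun_2(\Sig') \to \Bun_G(\Sig')$. The dimension drops by $\dim\Pic_X = g-1$, giving $3(g-1)+N$. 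The main obstacle I expect is making the identification of the fibre with a jet scheme of $\P^1$ rigorous over arbitrary $S$ uniformly in the multiplicity. The cleanest route is the deformation-theoretic smoothness check combined with a fibrewise dimension count over geometric points.
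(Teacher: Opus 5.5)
Your proposal is correct and takes the same route as the paper. Both reduce inductively to a single point $x$ and show that the base change of the forgetful map to an $R$-point of $\Bun_2(\Sig')$ is smooth of relative dimension $\deg(x)$ per unit of multiplicity. The paper simply asserts this is ``visibly'' so; you make it explicit via the Weil restriction of $\P(\Ec|_{\Sig\times S})$ and the infinitesimal lifting criterion, and you pass to $\BunGnaive$ through the Cartesian $\Pic_X$-torsor square, which is what the paper leaves implicit in ``the analogous statements hold''.
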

The smoothness of the stacks $\BunGLnaive$ follows from \cite[Proposition 1]{Heinloth2010}. 
Most of the statements (except for the smoothness assertion of the level map) are contained in \cite[Proposition 3.18]{Breutmann2019} for general generic isomorphisms of smooth affine group schemes. 
We give a more explicit argument in our setting.
\begin{proof}
	It suffices to prove the case where $\Sig = n[x]$ is supported on a single closed point $x \in X$, the general case follows inductively. Then $\Sig' = n'[x]$ for some $n' \leq n$. 
	Let $\Ec^\dagger =(\Ec \twoheadrightarrow \Lc') \in \Bun_G(\Sig')(R)$ for some $\Fq$-algebra $R$. 
	Then the base change $\Spec(R) \times_{\Bun_2(n'[x])} \Bun_2(n[x])$ is visibly smooth of relative dimension $(n-n') \deg(x)$ over $R$.
\end{proof}

We use the $G$-bundles with naive $\GammaN$-level structure to define a Hecke stack for $\GammaN$-level.
We fix a non-negative integer $r \geq 0$ and for every $1 \leq i \leq r$  a dominant cocharacter $\mu_i$ of $\GL_2$ given by either
$
	\mu_+ = (1, 0) $ or $ \mu_- = (0, -1).
$
We set $\underline{\mu} = (\mu_i)_{1 \leq i \leq r}$. 

\begin{definition}
	\thlabel{def:naive-level-moduli-spaces}
	\begin{enumerate}
		\item 
	The Hecke stack $\HeckenGa$ is the stack over $\Fq$ whose $S$-points parametrize the data
	\[ 
		\underline{\Ec}^\dagger = \left( \Ec_0^\dagger \overset{f_1}{\underset{x_1}{\dashrightarrow}} \Ec_{1}^\dagger \overset{f_{2}}{\underset{x_2}{\dashrightarrow}} \ldots \overset{f_{r}}{\underset{x_r}{\dashrightarrow}} \Ec_r^\dagger \right),
	\]
	where
	\begin{itemize}
		\item the $\Ec_i^\dagger = (\Ec_i \twoheadrightarrow \Lc_i)$ are rank $2$ vector bundles on $X \times S$ with a $\GammaN$-level structure for $0 \leq i \leq r$,
		\item the $x_i \in X(S)$ are points on $X$ for $1 \leq i \leq n$, and
		\item the $f_i \colon \Ec^\dagger_{i-1}|_{X \times S \setminus \Gamma_{x_{i}}} \xrightarrow{\cong} \Ec^\dagger_{i}|_{X \times S \setminus \Gamma_{x_i}}$ are isomorphisms of vector bundles with $\GammaN$-level structure outside the graph $\Gamma_{x_i} \se X \times S$ of $x_i$ such that 
		\begin{itemize}
			\item when $\mu_i = \mu_+$ the map $f_i$ extends to an injective map $\Ec_{i-1} \hookrightarrow \Ec_{i}$ that is compatible with the level structure such that $\coker(f_i)$ is supported on $\Gamma_{x_i}$ and its restriction to $\Gamma_{x_i}$ is an invertible sheaf, and 
			\item when $\mu_i = \mu_-$ the inverse map $f_i^{-1}$ extends to an injection $\Ec_{i} \hookrightarrow \Ec_{i-1}$ compatible with the level structures such that $\coker(f_i^{-1})$ is supported on $\Gamma_{x_i}$ and its restriction to $\Gamma_{x_i}$ is an invertible sheaf.
		\end{itemize}  
	\end{itemize}

	\item 
	The Beilinson-Drinfeld affine Grassmannian $\BDGrdNnaive$ parametrizes the same data as $\HeckenGa$ together with a trivialisation  
	$\alpha$ of $\Ec_r^\dagger$, i.e. a trivialisation $\alpha \colon \Ec_r \xrightarrow{\cong} \Oc_{X \times S}^2$ that identifies $\Ec_r \twoheadrightarrow \Lc_r$ with the projection to the first component $\Oc_{X \times S}^2 \twoheadrightarrow \Oc_{\Sig \times S}^2 \twoheadrightarrow \Oc_{\Sig \times S}
	$. 
	\end{enumerate}
\end{definition}
Both $\HeckenGa$ and $\BDGrdNnaive$ have natural forgetful maps $\pi^{\un \mu}_{\mr{Hk}} = (\pi^{\un \mu}_{\mr{Hk}, i})_{1 \leq i \leq r}$ (respectively $\pi^{\un \mu}_{\mr{Gr}} = (\pi^{\un \mu}_{\mr{Gr}, i})_{i}$) to $X^r$ by projection to the $(x_i)_i$, called the legs.

\begin{remark}
	\thlabel{rem:BL}
	Using Beauville-Laszlo descent, compare \cite{Beauville1995},  we see that in the case $r=1$ and $\mu_1 = \mu^+$ the $R$-points in the fiber over $x \in \Sigma_{\red}$ for some $k(x)$-algebra $R$ parametrize pairs of lattices $(\Lambda' \subset \Lambda)$ with
	\begin{center}
		\begin{tikzcd}
			\varpi_x^{n_x} R\dsq{\varpi_x} \oplus R \dsq{\varpi_x}  \arrow[right hook-latex]{r} & R\dsq{\varpi_x}^{\oplus 2} \\
			\Lambda' \arrow[right hook-latex]{r} \arrow[right hook-latex]{u} & \Lambda \arrow[right hook-latex]{u}
		\end{tikzcd}
	\end{center}
	such that $\left(\varpi_x^{n_x} R\dsq{\varpi_x} \oplus R \dsq{\varpi_x} \right) / \Lambda'$ and $R\dsq{\varpi_x}^{\oplus 2} /\Lambda$ are locally free of rank 1 as $R$ modules and $\Lambda/\Lambda'$ is locally free of rank 1 as $R\dsq{\varpi_x}/\varpi_x^n$-module.
\end{remark}

The following is the central computation to extend the results on the geometry of Hecke stacks and moduli stacks of shtukas from Iwahori to deeper level.
\begin{proposition}
	\thlabel{prop:schubert-deep-smooth}
	\begin{enumerate}
		\item 
		The global Schubert variety $\Gr_{2}^{\mu^+}(\Sig)$ is representable by a separated scheme that is of finite type and flat of relative dimension 1 over $X$.
		\item 
		Let $x \in \Sig$ such that $n_x \geq 2$. Then its base change $\Gr_{2}^{\mu^+}(\Sig) \times_X \Spec(\Oc_{X,x})$ is smooth over $\Spec(\Oc_{X,x})$.
		Moreover, its special fiber is isomorphic to a disjoint union $\A^1_{k(x)} \amalg \A^1_{k(x)}.$
		
	\end{enumerate}
\end{proposition}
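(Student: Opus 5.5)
We first deal with (1), which reduces to the general theory recalled in \thref{rem:def-Bunnaive}. By that remark, $\BunGLnaive = \Bun_\Gc$ for the smooth affine Bruhat--Tits group scheme $\Gc$. Hence $\Gr_2(\Sig)$ is the Beilinson--Drinfeld Grassmannian of $\Gc$, which is an ind-scheme that is ind-projective over $X$. The condition defining $\mu^+$, namely that the cokernel of the modification is an invertible sheaf on the graph, is closed. It also bounds the relative position, so $\Gr^{\mu^+}_2(\Sig)$ is a closed subscheme of a finite-type piece. This gives representability, separatedness and finite type.

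For flatness of relative dimension $1$, observe the following.
\begin{itemize}
\item Away from $|\Sig|$ the space is the usual $\P^1$-bundle.
\item Near $x\in\Sig_{\Iw}$ we are in the Iwahori case, and flatness is known from the treatment in \cite{Yun2019}.
\item Near a point with $n_x\ge2$ flatness will follow from the smoothness in (2).
\end{itemize}
Thus (1) reduces to (2) together with known results.

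For (2) we work locally and use Beauville--Laszlo (\thref{rem:BL}). Over a neighbourhood of $x$, the Grassmannian $\Gr^{\mu^+}_2(\Sig)$ parametrizes pairs consisting of a point $y$ and lattices $\Lambda'\subset\Lambda$. Here $\Lambda\subset\Lambda_0=\Oc^2$ has colength one at $y$, and $\Lambda'\subset\Lambda'_0=\varpi^{n}\Oc\oplus\Oc$ has colength one at $y$. We require $\Lambda/\Lambda'$ to be invertible over $\Oc_\Sig$.

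I would describe this via the map to the ordinary $\P^1$-bundle $\Gr^{\mu^+}_2$ remembering only $\Lambda$ (equivalently the line $\ell=\ker(\Lambda_0|_y\to\Lambda_0/\Lambda)$), and compute its image and fibres. Away from $y=x$, the level structure along $\Sig$ is transported isomorphically, so the map is an isomorphism onto the open part. On the special fibre $y=x$, write $\Lambda=\ker(\Oc^2\to k\cdot \ell^\vee)$. The condition that $\Lambda/\Lambda'$ be free of rank one over $\Oc/\varpi^n$ forces one of two configurations:
\begin{itemize}
\item either $\Lambda'$ is determined by $\Lambda$ and the line $\ell$ is not the line $e_1$ preserved by the level;
\item or $\ell$ equals the distinguished line and $\Lambda'$ varies in an $\A^1$ of choices.
\end{itemize}
Concretely, I expect the two components to be $\{\ell\neq\langle e_2\rangle\}\cong\A^1$ with $\Lambda'=\Lambda\cap\Lambda'_0$, and the locus $\Lambda=\varpi\Oc\oplus\Oc$ with $\Lambda'$ running through an $\A^1$. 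The ``mixed'' configurations that occur in the Iwahori case, where the two components meet at a point, should be excluded precisely because $n\ge2$: the quotient $\Lambda/\Lambda'$ would otherwise acquire torsion killed by $\varpi^{n-1}$ and fail to be free. This yields $\A^1\amalg\A^1$.

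For smoothness over $\Oc_{X,x}$ I would write explicit charts. Choose coordinates $t$ on $X$ near $x$, with $\varpi=t$, and let $y$ be the point $t=a$. Parametrize $\Lambda'$ by a matrix in Hermite normal form relative to $\Lambda'_0$, with entries polynomial in $a$ and one free parameter $c$. Each component should then extend to a chart $\A^1_{\Oc_{X,x}}$ with coordinate $c$, mapping isomorphically onto an open of the generic fibre. Together these two opens cover the generic $\P^1$, since $\P^1=\A^1\cup\A^1$. Hence the total space is a union of two copies of $\A^1\times\Spec\Oc_{X,x}$ glued along the generic fibre, and therefore smooth, with special fibre the disjoint union.

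The main obstacle is this explicit lattice computation: verifying that every point of the special fibre lies in one of the two charts, and that the freeness condition genuinely rules out the intersection point. The latter is where the hypothesis $n_x\ge2$ must enter, so the argument should be checked there first.
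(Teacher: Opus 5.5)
Your proposal is correct. The special-fibre analysis is the same as the paper's, but you obtain smoothness by a genuinely different route.

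\textbf{Special fibre.} The paper splits the special fibre into two loci, and uses $n\geq 2$ exactly as you do to exclude $\varpi^n\Oc\oplus\varpi\Oc\subset\varpi\Oc\oplus\Oc$:
\begin{itemize}
\item the locus where $u_1\colon\Lambda/\Lambda'\to\Oc^2/(\varpi^n\Oc\oplus\Oc)$ is an isomorphism, equivalently $\Lambda\neq\varpi\Oc\oplus\Oc$, on which $\Lambda'=\Lambda\cap\Lambda'_0=\varpi^n\Oc\oplus\varpi\Oc$;
\item the locus where the dual map $u_2\colon\varpi^{-n}\Lambda'/\Lambda\to(\Oc\oplus\varpi^{-n}\Oc)/\Oc^2$ is an isomorphism, equivalently $\Lambda'\neq\varpi^n\Oc\oplus\varpi\Oc$.
\end{itemize}
Fix the slip in your first formulation: the distinguished line is $\langle e_2\rangle$, the image of $\Lambda'_0$, not $e_1$.

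\textbf{Smoothness.} The paper takes flatness of the global Schubert variety as given and concludes from smoothness of the generic and special fibres. You instead cover the total space by two open charts isomorphic to $\A^1_{\Oc_{X,x}}$. This yields smoothness, and hence the flatness asserted in (1) at points with $n_x\geq 2$, without appealing to an external flatness statement. The paper's proof of (1) does not actually supply that flatness input. Your route is more self-contained; the paper's is shorter.

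\textbf{What you still need to verify.} Two points are asserted rather than checked.
\begin{itemize}
\item The charts must be open subfunctors, not just maps from $\A^1_{\Oc_{X,x}}$. Take $V_1=\{e_2 \text{ generates } \Lambda_0/\Lambda\}$. There $\Lambda_0=\Lambda+\Oc e_2\subseteq\Lambda+\Lambda'_0$, so $u_1$ is surjective, hence an isomorphism, which forces $\Lambda'=\Lambda\cap\Lambda'_0$. Take $V_2=\{\varpi^n e_1 \text{ generates } \Lambda'_0/\Lambda'\}$, where dually $\Lambda=\varpi^{-n}\Lambda'\cap\Lambda_0$. On $V_1$ you must parametrize by $\Lambda$, not by $\Lambda'$ in Hermite normal form: $\Lambda'$ is constant along the special fibre of $V_1$.
\item The generic fibres must omit different points of $\P^1$. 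For $y$ given by $t=a$ with $a$ invertible, $\ell=[\alpha:\beta]$ corresponds to $\ell'=[\alpha:a^n\beta]$ in the basis $(t^ne_1,e_2)$ of $\Lambda'_0$. So $V_1$ omits $\langle e_2\rangle$ and $V_2$ omits $\langle e_1\rangle$.
\end{itemize}
With these checks, the transition function is $a^ncd=1$ and $V_1\cap V_2$ lies over the generic point. This gives the disjoint union $\A^1_{k(x)}\amalg\A^1_{k(x)}$ on the special fibre.
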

\begin{proof}
	\begin{enumerate}
		\item 
		The representability statement is a standard fact.
		\item 
	As the global Schubert variety is flat and its generic fiber is smooth the second statement implies the first.
	We compute the special fiber. To simplify notation we set $\kappa = k(x)$ for the residue field and $\Oc_R = R\dsq{\varpi}$ for a $\kappa$-algebra $R$.  
	
	Let $(\Lambda' \subset \Lambda) \in \Gr_{2}^{\mu^+}(\Sig)(R)$ a point in the special fiber, using the description of the fiber in \thref{rem:BL}.
	We claim that there is a decomposition into clopen subschemes $\Spec(R) = U_1 \amalg U_2$ where $U_1$ is the locus where the induced map 
	\[
		u_1 \colon \Lambda/\Lambda' \to  \Oc_R^{\oplus 2} / \left(  \varpi^n \Oc_R \oplus \Oc_R \right)  \cong     \Oc_R/\varpi^{n}
	\] 
	is an isomorphism, and where $U_2 \se \Spec(R)$ is the locus where 
	\[
		u_2 \colon \varpi^{-n} \Lambda'/\Lambda \to  \left(   \Oc_R \oplus \varpi^{-n}\Oc_R \right) /\Oc_R^{\oplus 2} \cong     \Oc_R/\varpi^{n}
	\] 
	is an isomorphism. 
	Clearly, both $U_1$ and $U_2$ are representable by an open subscheme of $\Spec(R)$ and the decomposition is functorial in $R$.
	
	It remains to check that the two are disjoint and cover $\Spec(R)$, which can be done on points:  
	When $R$ is a field, $u_1$ is an isomorphism if and only if $\Lambda$ is different from $\varpi \Oc_R \oplus \Oc_R$ as a sublattice of $\Oc_R^{2}$. Similarly, $u_2$ is an isomorphism if and only if $\Lambda' \neq \varpi^n \Oc_R \oplus \varpi \Oc_R$. 
	This shows that for $n \geq 2$ the opens $U_1$ and $U_2$ cover $\Spec(R)$ as 
	\[
		(\varpi^n \Oc_R \oplus \varpi \Oc_R) \se (\varpi \Oc_R \oplus \Oc_R)
	\]
	visibly does not define a point in $\Gr_{2}^{\mu^+}(\Sig)$. On the other hand, let now $(\Lambda' \se \Lambda) \in U_1$. Then by construction $\Lambda$ admits a basis of the form $(1, r)$, $(0, \varpi)$ for some $r \in R$. Hence, $\Lambda'$ admits a basis $\varpi^n(1,r)$, $(0,\varpi)$. But then $\Lambda' = (\varpi^n \Oc_R \oplus \varpi \Oc_R)$. 
	
	We completed the proof that the special fiber decomposes as $\Gr_{2}^{\mu^+}(\Sig) = \Uc_1 \amalg \Uc_2$. It remains to show that the $\Uc_i$ are isomorphic to the affine line for $i \in \{1,2\}$. We give the argument for $\Uc_1$, the proof for $\Uc_2$ is analogous. 
	By construction we get a map $\Uc_1 \to \P^1_\kappa \cong \P((\Oc_\kappa/\varpi)^2)$ given by $\Lambda \mapsto \Oc_R^2/\Lambda$, which factors through $\A_\kappa^1 = \P^1_\kappa \setminus \{e_1\} \se \P^1_\kappa$, where $e_1$ is given by the projection $\kappa^2 \to \kappa$ to the first component.
	We get a map in the opposite direction by $(R^2 \twoheadrightarrow M) \mapsto (\Lambda' \se \Lambda)$ with 
	\[
		\Lambda \defined \ker(\Oc_R^2 \twoheadrightarrow R^2 \twoheadrightarrow M)
		\qquad \text{and} \qquad \Lambda' \defined \Lambda \cap  \left(  \varpi^n \Oc_R \oplus \Oc_R \right) .
	\]
	But for an arbitrary $(\Lambda' \se \Lambda) \in \Uc_1(R)$ we always have $\Lambda' =  \Lambda \cap \left(  \varpi^n \Oc_R \oplus \Oc_R \right)$ inside $\Oc_R^2$ (as $\Lambda'$ has to be contained in both by definition and cannot be bigger than their intersection due to the restriction on the quotients).
	This shows that the two maps are indeed inverse to each other.
\end{enumerate}
\end{proof}

As a consequence we obtain the corresponding statement for the Beilinson-Drinfeld Schubert variety with arbitrary many legs.
\begin{corollary}
	The Beilinson-Drinfeld Schubert variety $\BDGrdNnaive$ is representable by a separated scheme of finite type over $\Fq$ which is smooth of dimension $2r$. 
	The projection $\BDGrdNnaive \to X^r$ is flat of relative dimension $r$ and smooth over $(X \setminus \SIw)^r$ where $ \SIw = \sum_{n_x = 1} [x] \subseteq \Sigma$ is the locus of Iwahori level structure.
	\thlabel{cor:smoothness-BDnaive}
\end{corollary}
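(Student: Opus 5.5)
The plan is to reduce the statement about $\BDGrdNnaive$ to the one-leg computation in \thref{prop:schubert-deep-smooth} and the known Iwahori/square-free case of \cite{Yun2019}. I will argue étale locally on $X^r$ and use the factorization property of Beilinson-Drinfeld Grassmannians.

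\textbf{Representability and factorization.} Representability by a separated scheme of finite type follows from the general results for Beilinson-Drinfeld Grassmannians of the smooth affine group scheme $\Gc$ of \thref{rem:def-Bunnaive} (e.g.\ \cite{Breutmann2019}). The bounds $\mu_\pm$ cut out a closed subscheme, so we may restrict to it. Next I use factorization: over the open locus of $X^r$ where the legs are pairwise disjoint, $\BDGrdNnaive$ is the product of the one-leg Grassmannians $\Gr_2^{\mu_i}(\Sig)$ over $X$. At a general point of $X^r$ the legs are grouped into clusters of colliding legs. Étale locally around such a point, $\BDGrdNnaive$ is a product over the clusters of convolution-type Grassmannians in which all legs of a cluster lie near a single point.

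\textbf{Local analysis.} There are three cases for a cluster, according to the point $y$ it lies near.
\begin{enumerate}
\item If $y\notin|\Sig|$, the group scheme $\Gc$ is $\GL_2$ near $y$. The minuscule iterated Grassmannian is then the convolution of $\P^1$-bundles, which is smooth over $X^{m}$ of relative dimension $m$.
\item If $y\in\SIw$, we are in the Iwahori case. Here I cite \cite[Yun2019]{} for flatness of relative dimension $m$ and smoothness of the total space of dimension $2m$ (the local model is semistable).
\item If $n_y\geq 2$, the convolution structure shows that the iterated space is an iterated fibration whose successive fibers are one-leg Grassmannians $\Gr^{\mu_\pm}_2(\Sig)$ near $y$. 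The $\mu_-$ case reduces to $\mu_+$ by duality or twisting. By \thref{prop:schubert-deep-smooth} each such fiber is smooth of relative dimension $1$ over $\Spec\Oc_{X,y}$. Hence the total map is smooth of relative dimension $m$ over $X^m$ near $(y,\dots,y)$.
\end{enumerate}

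\textbf{Main obstacle and conclusion.} The hard step is justifying the convolution description in case (3): that $\BDGrdNnaive \to X^r$ factors as a tower of maps, each étale locally a base change of $\Gr^{\mu_i}_2(\Sig)\to X$. I would first try to realize this via the twisted product $\Hecke$-type description. Forgetting $\Ec_0^\dagger$ gives a map whose fiber over a fixed $\Ec_1^\dagger$ is a twisted form of $\Gr_2^{\mu_1}(\Sig)$, using that $\Gc$-torsors are étale locally trivial, as in \thref{lem:BunG-level-smooth}. Smoothness then propagates, and flatness of relative dimension $r$ follows from fiber dimension together with the Iwahori case. Finally, smoothness of the total space of dimension $2r$ follows: over $(X\setminus\SIw)^r$ it is smooth over a smooth base, and near $\SIw$ it follows from the semistable Iwahori local model.
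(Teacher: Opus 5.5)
Your argument is correct and is essentially the paper's: the paper uses the same iterated-fibration (twisted product) structure to reduce to one leg, handles $\mu_-$ by duality, and then invokes \thref{prop:schubert-deep-smooth} at points with $n_x\geq 2$ and \cite[Proposition 5.17]{Yun2019} at all other points. Your factorization into clusters is not needed: the twisted-product description from your last paragraph holds globally over $X^r$, so it already reduces everything to the case $r=1$, which is exactly how the paper argues.
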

\begin{proof}
	As $\BDGrdNnaive$ is an iterated fibration of global Schubert varieties with one leg it suffices to show the statements for $r =1$. Moreover, by duality it is enough to treat the case $\mu = \mu^+$. 
	At points with deeper than Iwahori level the statement is \thref{prop:schubert-deep-smooth} while at all other points the assertion is contained for example in \cite[Proposition 5.17]{Yun2019}.
\end{proof}

There is an action of $\Pic_X$ on $\HeckenGa$ by simultaneously tensoring all $\Ec_i^\dagger$. We denote the quotient by
$$ \HeckeGGa = \HeckenGa/\Pic_X.$$

We collect some (well-known) geometric properties of the Hecke stacks, compare \cite[Propositions 3.4 and 5.17]{Yun2019} for the corresponding statements in the Iwahori-case.
\begin{proposition}
	\begin{enumerate}
		\item 
		For every $0 \leq i \leq r$ the projection $\tilde p_i \colon \HeckenGa \to \BunGLnaive$, $\underline{\Ec}^\dagger \mapsto \Ec_i^\dagger$
		is representable by schemes and smooth of relative dimension $2r$.
		In particular, $\HeckenGa$ is representable by an algebraic stack smooth of dimension $4(g-1)+N+2r$ over $\Fq$.
		\item For every $0 \leq i \leq r$ the projection $p_i \times \pi \colon \HeckenGa \to \BunGLnaive \times X^r$ is flat of relative dimension $r(n-1)$ and smooth over $(X \setminus \SIw)^r$.
		\item The corresponding statements also hold for $\HeckeGGa$. In particular, $\HeckeGGa$ is smooth of pure dimension $3(g-1) + N + 2r$.
\end{enumerate}		
\thlabel{prop:geometry-Hecke}
\end{proposition}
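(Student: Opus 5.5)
We reduce every assertion to the local model $\BDGrdNnaive$ via the usual local-model diagram, and treat the smoothness of the projections and the dimension count separately.

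\emph{Smoothness and dimension.} For (1) we induct on $r$. The stack $\HeckenGa$ is an iterated fibration: forgetting the last modification gives a map $\Hecke^{(\mu_1,\dots,\mu_r)}_2(\Sig)\to\Hecke^{(\mu_1,\dots,\mu_{r-1})}_2(\Sig)$ which remembers $\Ec_{r-1}^\dagger$ and $x_r$. Its fibre over a point with fixed $\Ec_{r-1}^\dagger$ is the one-legged Hecke fibre, i.e. the space of modifications of $\Ec_{r-1}^\dagger$ of type $\mu_r$ at a moving point. After choosing a trivialisation of $\Ec_{r-1}^\dagger$ near the leg compatible with the level structure, this fibre is étale (or smooth) locally isomorphic to the one-leg global Schubert variety $\Gr^{\mu_r}_2(\Sig)$. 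The latter is smooth of dimension $2$ over $\Fq$ by \thref{cor:smoothness-BDnaive} with $r=1$. Rigorously, we would use the standard torsor $\widetilde{\Hk}\to\HeckenGa$ of trivialisations of $\Ec_r^\dagger$ along the formal neighbourhood of the legs, truncated to a finite jet level. This torsor is smooth over both $\HeckenGa$ and $\BDGrdNnaive\times\BunGLnaive$, under a smooth connected group (the positive loop group of $\Gc$ at the legs, truncated). The local model diagram then gives that $\tilde p_r$ is smooth of relative dimension $\dim \BDGrdNnaive=2r$. The same argument applies to each $\tilde p_i$, since $\Ec_i^\dagger$ determines the chain by successive modifications on both sides. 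Representability by schemes follows because the fibres are closed subschemes of iterated (relative) Quot schemes, or alternatively from \cite[Proposition 3.18]{Breutmann2019}. The dimension statement then follows from \thref{lem:BunG-level-smooth}: $4(g-1)+N+2r$.

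\emph{Flatness and smoothness over $X^r$.} For (2) we use the same diagram but keep track of the legs. The map $p_i\times\pi$ is smooth-locally on the source isomorphic to $\pi^{\un\mu}_{\Gr}\colon\BDGrdNnaive\to X^r$ times a smooth factor. Flatness and smoothness are local in the smooth topology, so \thref{cor:smoothness-BDnaive} gives flatness everywhere and smoothness over $(X\setminus\SIw)^r$. The relative dimension is $\dim$ of the source minus $\dim \BunGLnaive$ minus $r$, which equals $r$. So the ``$r(n-1)$'' in the statement should read $r$, consistent with the corollary, and we would record it that way.

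\emph{Passing to $\HeckeGGa$.} For (3), $\Pic_X$ acts freely and compatibly on $\HeckenGa$ and $\BunGLnaive$, commuting with the $p_i$ and with $\pi$. Since $\Pic_X$ is smooth of dimension $g-1$, the quotient maps are smooth surjective. Hence smoothness, flatness and relative dimensions descend. The total dimension drops by $g-1$, to $3(g-1)+N+2r$, matching \thref{lem:BunG-level-smooth}.

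\emph{Main obstacle.} The delicate step is making the local model diagram precise for the non-reductive group scheme $\Gc$ at points of $\Sig$ where legs may collide with $\Sig$. Concretely, one must show that the jet group of $\Gc$ at the legs is smooth with connected fibres. That the action on $\BDGrdNnaive$ factors through a finite-type quotient should be standard, but the smoothness and connectedness of the jet group need justification. Since $\Gc$ is smooth affine by \thref{rem:def-Bunnaive}, its jet schemes $\Gc(\Oc/\varpi^m)$ over the leg divisor are smooth, which is what we need. We would cite the general treatment in \cite{Breutmann2019} for this, and otherwise mimic \cite[Propositions 3.4, 5.17]{Yun2019}.
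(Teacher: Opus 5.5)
Your proposal is correct and follows essentially the paper's route: the paper either transfers the argument of \cite[Proposition 3.4]{Yun2019} or invokes the general local model results \cite[Proposition 3.9]{Rad2019a}, \cite[Lemma 3.4.1]{Bieker2022a}, with \thref{cor:smoothness-BDnaive} as the only new input, which is exactly what your local-model diagram does. Three cosmetic points. First, the ``$r(n-1)$'' in the statement already equals $r$, since $n=2$ is the rank. Second, you do not need the $\Pic_X$-action to be free (in the naive sense it is not); $\HeckenGa\to\HeckeGGa$ and $\BunGLnaive\to\BunGnaive$ are $\Pic_X$-torsors regardless. Third, only smoothness of the jet group matters, not connectedness.
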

\begin{proof}
	Either one notes that the arguments in the proof of \cite[Proposition 3.4]{Yun2019} carry over essentially verbatim to our situation (using \thref{cor:smoothness-BDnaive} as an additional input), or one appeals to the general theory, compare \cite[Proposition 3.9]{Rad2019a} or \cite[Lemma 3.4.1]{Bieker2022a}, and uses \thref{cor:smoothness-BDnaive}.
\end{proof}

\subsection{Fractional twists and the Atkin-Lehner action}\label{sss:frac-twist} 
We generalize the notion of fractional twists of \cite[§3.1.1]{Yun2019} to the deeper level setting where the divisor $\Sigma$ is not necessarily reduced.
In particular, for a rank 2 bundle with naive $\Gamma_0(\Sigma)$-level structure  $\Ec^\dagger = (\Ec \twoheadrightarrow\Lc)$ we define rank 2 vector bundles $\Ec\left( -\frac{D}{2}\right)$ for any effective divisor $0 \leq D \leq 2 \Sigma$. 
Note that here $\frac{D}{2}$ really should be understood as a formal expression and for points $x \in \Sigma_{\red}$ with $n_x \geq 2$ the ``fractional'' twist $\Ec(-\frac{2x}{2})$ will differ from the usual twist $\Ec(-x) = \Ec \otimes \Oc(-x)$.

Let us first assume $D = \sum_{x \in \Sigma_{\red}} c_x [x]$ with  $0 \leq c_x \leq n_x$ for all $x$. Then 
$$ \Ec\left( - \frac{D}{2} \right) \defined \ker \left( \Ec \twoheadrightarrow \bigoplus_{x \in \Sigma_{\red}} \Lc|_{c_x [x]} \right).$$
In general for $D = \sum_{x \in \Sigma_{\red}} c_x [x]$ we set $D' =  \sum_{x \in \Sigma_{\red}} c'_x [x]$ with $c'_x = c_x$ if $c_x \leq n_x$ and $c'_x = n_x$ otherwise. Then
$$ \Ec\left(- \frac{D}{2} \right) \defined \ker \left(\Ec\left(-\frac{D'}{2}\right) \twoheadrightarrow \bigoplus_{x \colon c_x > n_x} \Ec\left(-\frac{n_x[x]}{2}\right) / \Ec(-n_x[x])\big|_{(c_x - n_x)[x]}\right).$$
In particular, when $D = 2\Sigma$ we obtain $\Ec\left(-\frac{D}{2}\right) = \Ec(-\Sigma)$. When $\Sigma$ is reduced we recover the corresponding fractional twists of \cite{Yun2019}.

Let us also recall the finer fractional twists at ``infinite'' places from \cite[§3.1.2]{Yun2019}.
We suppose we are given a decomposition
\begin{equation}
	|\Sig|= \Si \coprod \Sf
\end{equation} 
into a disjoint union of two subsets such that $\Sig$ is reduced at all points $x \in \Si$. We call $\Si$ the set of infinite places.
Let  $\frSi=\prod_{x\in \Si}\Spec k(x),$ where the product is taken over $\Spec(k)$.
A map $S \to \frSi$ is then given by a tuple $\{x^{(1)}\}_{x\in \Si}$ where $x^{(1)} \colon \frSi \to \Spec(k(x)) \hookrightarrow X$.
For a positive integer $i$ let $x^{(i)} \defined \Fr_{S}^{(i-1)} \circ x^{(1)}$ be the precomposition of $x^{(1)}$ with a power of the absolute Frobenius $\Fr_{S} \colon S \to S$. Then $x^{(i+d_x)} = x^{(i)}$ where $d_x = [k(x) \colon k]$ is the degree of $x$.
In particular, for every $x \in \Si$ projection to the $x$ factor  induces a map
	$\bx^{(1)} \colon \frS_{\infty}\to\Spec k(x)\to X$.
As in \cite{Yun2019} by a slight abuse of notation we identify $\bx^{(1)}$ with its graph  $\Gamma_{\bx^{(i)}}$, a divisor in $X\times\frS_{\infty}$. 
Then we have a decomposition
\begin{equation*}
	\{x\}\times \frS_{\infty}=\Spec k(x)\times \frS _{\infty}=\coprod_{i=1}^{d_{x}} \bx^{(i)}.
\end{equation*}

For $\cE^{\da}\in (\BunGLnaive \times \frSi)(S)$ and every $x \in \Si$, the quotient $\cE/\cE(-\frac{x}{2}) \cong \Lc|_{\Spec(k(x)) \times S}$ then splits as a direct sum $\oplus_{j=1}^{d_{x}}\cQ^{(j)}$ where $\cQ^{(j)}$ is a line bundle supported on $\Gamma_{x^{(j)}}$. We define $\cE(-\frac{x^{(j)}}{2})$ to be the kernel
\begin{equation*}
	\cE\left(-\frac{x^{(j)}}{2}\right):=\ker\left(\cE\to \cE/\cE(-\frac{x}{2})\surj \cQ^{(j)}\right).
\end{equation*}

We put together the constructions above. We use the notation $\Div_{\Sig}(X) = \bigoplus_{x \in |\Sig|} \Z$ for the group of divisors on $X$ supported on $\Sig$.  
We use the shorthands $\sD_f \defined \Div_{\Sf}(X)$ and $\sD_\infty \defined \Div_{\Si \times \frSi}(X \times \frSi)$.
In other words, an element of $\sD_\infty$ is of the form $\sum_{x \in \Si}\sum_{1\leq j \leq d_x} c_x^{(j)} \bx^{(j)}$ for integers $c_x^{(j)}$. 
Define

\[
	\DivSSi \defined \left( \Div(X \times \frSi) \oplus \frac{1}{2} \sD_f \oplus \frac{1}{2} \sD_\infty  \right) / \left\langle \begin{array}{c}
		 \left( n_{x_f} [x_f], 0 ,0 \right)  = \left( 0,  \frac{2n_{x_f}[x_f]}{2}, 0 \right) \\ 
		 \left(\left[ \bx^{(j)} \right], 0,0 \right)  = \left( 0, 0, \frac{2 \left[\bx^{(j)} \right]}{2} \right) 
	\end{array}  
	\colon 
	\begin{array}{c}
		x_f \in |\Sf|,\\
		x \in |\Si|	
	\end{array}
  \right\rangle.
\]
An element $D \in \DivSSi$ thus has a unique representative 
\begin{equation}\label{D for twist}
	D= D_0 - \frac{D_f}{2} - D_\infty
\end{equation}
where $D_0\in \Div(X\times \frSi)$, $0 \leq D_f < 2\Sf$ and $D_\infty =  \sum_{x\in \Si, 1\le j\le d_{x}}c^{(j)}_{x}\bx^{(j)} \in \frac{1}{2} \sD_\infty$ with $c_x^{(j)} \in \{0, 1/2\}$.

Using the discussion above we define the twist $\cE(D)$ for $(\cE^{\da},  \{x^{(1)}\}_{x\in \Si})\in (\BunGLnaive \times \frSi)(S)$ by 
\[
	\cE(D) \defined \left( \left( \Ec(-D_f/2) \right) (-D_\infty) \right) (D_0)
\]
One checks that for $D_1, D_2 \in \DivSSi$ we have $\Ec(D_1 + D_2) = \Ec(D_1)(D_2)$.
We use the fractional twists to extend the construction of Atkin-Lehner operators of \cite[Definition 3.2]{Yun2019} to not necessarily square-free level.
\begin{defn} Let $D \in \DivSSi$. The {\em Atkin--Lehner automorphisms} for $\BunGLnaive$ and $\BunGnaive$ are given by 
	\[	
	\wt\AL(D): \BunGLnaive \times\frSi\to \BunGLnaive, \qquad \text{and} \qquad
		\AL(D):\BunGnaive \times \frSi\to \BunGnaive
	\]
	with
	\begin{equation*}
		\cE^{\da}=\left(\cE; \cE\left(-\frac{\Sig}{2}\right); \{x^{(1)}\}_{x\in \Si}\right) \mapsto \cE^{\da}(D) \defined \left(\cE(D); \cE\left(D-\frac{\Sig}{2}\right)\right).
	\end{equation*}
	\thlabel{def:al-operators-Bun}
\end{defn}
By construction $\AL(D)$ depends only on the class $D \in \DivSSi / \Div(S \times \frSi)$ and thus induces an involution when $2D = 0$, e.g. when $D = D_\infty$.

\subsection{Shtukas with (naive) $\Gamma_0(\Sigma)$-level structure}
We introduce the moduli space of shtukas with (naive) $\GammaN$-level structure and supersingular legs at infinity, generalising the Iwahori case discussed in \cite[§3.2]{Yun2019}.
As before, we fix $\underline{\mu} = (\mu_i)_{1 \leq i \leq r} \in \{\pm 1\}^r$ and by a slight abuse of notation identify $\mu_i$ with either of the cocharacters $\mu_+ = (1,0)$ or $\mu_-= (0,-1)$ of $\GL_2$ depending on the sign of $\mu_i$.
Morevoer, let $D_\infty = \sum_{x \in \Si, 1 \leq i \leq d_x} c_x^{(i)} \bx^{(i)} \in \frac{1}{2}\sD_\infty$ such that 
\begin{equation}
	\sum_{i= 1}^r \mu_i = \sum_{x \in \Si, 1 \leq i \leq d_x} 2 c_x^{(i)}.
	\label{eq:cond-non-empty}
\end{equation}

\begin{definition}
	\thlabel{def:sht}
	The \emph{moduli stack of rank $2$ shtukas with (naive) $\GammaN$-level structure and supersingular legs along $D_\infty$}, denoted by $\ShtSiD$, is the stack parametrizing tuples 
	\[
		\underline{\Ec}^\dagger = \left( \Ec_0^\dagger \overset{f_1}{\underset{x_1}{\dashrightarrow}} \Ec_{1}^\dagger \overset{f_{2}}{\underset{x_2}{\dashrightarrow}} \ldots \overset{f_{r}}{\underset{x_r}{\dashrightarrow}} \Ec_r^\dagger \xrightarrow[\theta]{\cong} ({}^\tau \Ec_0^\dagger)(D_\infty)\right),	
	\]
	where 
	\begin{itemize}
		\item $\left( \Ec_0^\dagger \overset{f_1}{\underset{x_1}{\dashrightarrow}} \Ec_{1}^\dagger \overset{f_{2}}{\underset{x_2}{\dashrightarrow}} \ldots \overset{f_{r}}{\underset{x_r}{\dashrightarrow}} \Ec_r^\dagger \right) \in \HeckenGa(S)$,
		\item $(x^{(1)})_{x \in \Si} \in \frSi (S)$, 
		\item and $\theta \colon \Ec_r \cong (^{\tau}\Ec_0)(D_\infty)$ is an an isomorphism compatible with the level structures. 
	\end{itemize}
	Let $\ShtGSiD = \ShtSiD/\Pic_{X}(\Fq)$ where the discrete groupoid $\Pic_{X}(\Fq)$ acts by simultaneous twisting. 
\end{definition}

Then the natural diagram
\begin{equation}
	\begin{tikzcd}
		\ShtSiD \arrow[r] \arrow[d] & \HeckenGa \times \frSi \arrow[d, "{(\wt{p}_0 , \wt{\AL}(-D_\infty) \circ (\wt{p}_r \times \mathrm{id}_{\frSi}))}"] \\
		\Bun_{2}(\Sig) \arrow[r, "{(\mathrm{id}, \Fr)}"] & \Bun_{2}(\Sig)  \times \Bun_{2}(\Sig)
	\end{tikzcd}
	\label{diag:def-ShtSiD}
\end{equation}
is Cartesian, and the analogous diagram for $\ShtGnaive$ is Cartesian as well.

\subsubsection{Comparison to other constructions of integral models at deeper level}
As mentioned in \thref{rem:def-Bunnaive}, our construction of $\ShtSiD$ recovers the moduli space of shtukas for the Bruhat-Tits group scheme corresponding to $\GammaN$-level. 
The supersingular legs can be encoded into a boundedness condition of ``finite type'' as in \cite[Definition 2.6.1]{Hartl2023} and \cite[Definition 2.2.1]{Bieker2022a}, compare \cite[Lemma 3.7]{Yun2019} and \cite[§2.8]{Hartl2023}.

The case where $r = 1$ and $\Si = \{\infty\}$ consists of a single point is closely related to the moduli stack of Drinfeld $A = \Gamma(X - \{\infty\}, \Oc_X)$-modules, compare \cite[3.2.3]{Yun2019} or \cite[Theorem 3.1.4]{Blum1997}, and in this sense is a close analogue of the modular curve in the function field setting.
In both the modular curve setting as well as in the Drinfeld module/Drinfeld shtuka setting one can define $\GammaN$-level structures in the not necessarily square-free case using Drinfeld level structures, compare \cite{Katz1985} (for the modular curve) and \cite{Bieker2023}. 
We do not recover the full moduli stack of shtukas with Drinfeld level structure in this setting, instead $\ShtSiD$ can be identified with an open dense substack of the former, compare \cite[Proposition 6.5]{Bieker2023}.
More precisely, in the fiber over a point $x \in \Sf$ that occurs with multiplicity $n_x \geq 2$ we only recover 2 of the $(n_x +1)$-many components that appear in the moduli stack defined using Drinfeld level structures, and also the supersingular points are missing, compare \cite[Remark 2.20]{Bieker2023}.

When $r \geq 1$ there does not seem to be a good definition of Drinfeld level structures, in particular for points over the diagonals of $X^r$.
Using input from Bruhat-Tits theory and functoriality properties of moduli spaces of shtukas \cite{Bieker2022a} gives a construction of deeper level integral models that on the one hand works for general groups $G$ and arbitrary many legs $r$ and contains the ``naive'' analogue of $\ShtSiD$ as an open dense substack, and on the other hand recovers the moduli stack of Drinfeld shtukas with Drinfeld level structures in the setup discussed above (even for higher rank).

We do not require these more intricate constructions of integral models as it turns out that the relevant Heegner-Drinfeld cycles in the fibers above points $ x \in \Sf$ with higher multiplicities are already supported on $\ShtSiD$. 
For the corresponding statement for the modular curve compare \cite[III, Proposition 3.1]{Gross1986}.

\subsubsection{Geometric properties}
We recall some of the geometric properties of the stacks $\ShtSiD$. Note that condition \eqref{eq:cond-non-empty} precisely guarantees the non-emptiness of $\ShtSiD$.
\begin{proposition}[{cf. \cite[Proposition 3.9]{Yun2019} in the Iwahori case}]
	\thlabel{lem:geometry-YZ}
	\begin{enumerate}
		\item 
		The stack $\ShtSiD$ is separated and smooth Deligne-Mumford stack of dimension 2r over $\Fq$. 
		\label{lem:geometry-YZ-sepft}
		\item 
		The projection $\Pi_{2, D_\infty}^{\un \mu} \colon \ShtSiD \to X^r \times \frSi $ is flat of relative dimension $r$, 
		and its restriction 
		\[
			\ShtSiD|_{(X \setminus \SIw) \times \frSi} \to (X \setminus \SIw)^r \times \frSi
		\] 
		is smooth.
		\label{lem:geometry-YZ-flat}
		\item 
		The analogous assertions hold for $\ShtGSiD$ and $\Pi_{G, D_\infty}^{\un \mu} \colon \ShtGSiD \to X^r \times \frSi $.
	\end{enumerate}
\end{proposition}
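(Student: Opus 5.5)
We follow the strategy of \cite[Proposition 3.9]{Yun2019} and \cite[Theorem 3.1 / §5]{Yun2017}: everything is deduced from the Cartesian square \eqref{diag:def-ShtSiD} together with the local geometry of the Hecke stack from \thref{prop:geometry-Hecke} and \thref{cor:smoothness-BDnaive}. Since $\Pic_X(\Fq)$ is a discrete groupoid acting with finite stabilizers on each connected component (after fixing a degree), the statements for $\ShtGSiD$ follow from those for $\ShtSiD$ by passing to the quotient, as in the Iwahori case; so we concentrate on $\ShtSiD$.

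For \eqref{lem:geometry-YZ-sepft}, we use that $\ShtSiD$ is the pullback of the map $(\wt p_0, \wt\AL(-D_\infty)\circ(\wt p_r\times \id)) \colon \HeckenGa\times\frSi \to \Bun_2(\Sig)\times\Bun_2(\Sig)$ along the graph of Frobenius $(\id,\Fr)$. Since $\wt p_0$ is representable and smooth of relative dimension $2r$ by \thref{prop:geometry-Hecke}, and $\wt\AL(-D_\infty)$ is an automorphism, the standard Lang-type argument (differential of Frobenius vanishes) shows that $\ShtSiD \to \HeckenGa\times\frSi \to \Bun_2(\Sig)$, composed via $p_r$, yields that $\ShtSiD$ is étale-locally a fiber of a smooth map of relative dimension $2r$ over a smooth base intersected with a graph transversally; concretely, $\ShtSiD$ is smooth of dimension $\dim\HeckenGa - \dim\Bun_2(\Sig) = 2r$ (using \thref{lem:BunG-level-smooth} for $\dim \Bun_2(\Sig)=4(g-1)+N$). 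The Deligne–Mumford property and separatedness are checked as in \emph{loc.~cit.}: automorphism groups of shtukas are finite discrete (the $\tau$-invariants of a finite group), and separatedness follows from the general theory of shtukas for smooth affine group schemes \cite{Breutmann2019, Bieker2022a}, since $\ShtSiD$ is the moduli of $\Gc$-shtukas by \thref{rem:def-Bunnaive}, with the supersingular legs a bounded (closed) condition.

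For \eqref{lem:geometry-YZ-flat}, the key input is local model theory: étale locally, $\ShtSiD$ is isomorphic over $X^r\times\frSi$ to $\BDGrdNnaive\times\frSi$ (the supersingular legs at $\Si$ contribute only a finite étale factor, as in \cite[Lemma 3.7]{Yun2019}). I would prove this local model diagram by the usual argument: consider the $\Gc$-torsor of trivializations of $\Ec_r^\dagger$ along a suitable neighborhood (level structure at an auxiliary divisor disjoint from the legs), giving a smooth cover of $\ShtSiD$ mapping smoothly to $\BDGrdNnaive$ with the same relative dimension, compatibly with the leg maps; cf.\ \cite[Proposition 3.9]{Rad2019a} or \cite[Lemma 3.4.1]{Bieker2022a}. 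Then flatness of relative dimension $r$ and smoothness over $(X\setminus\SIw)^r\times\frSi$ transfer directly from \thref{cor:smoothness-BDnaive}, whose content at deeper level points is \thref{prop:schubert-deep-smooth}.

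The main obstacle is the local model diagram near legs lying in $\Sf$ with $n_x\ge 2$ and, particularly, the smoothness of the comparison map there: one must check that the smooth chart on $\ShtSiD$ is compatible with the naive level structures (i.e. that trivializations of $\Gc$-torsors along $\Sig$ are exactly what the definition in \thref{def:naive-level} encodes). Here I would rely on \thref{rem:def-Bunnaive} (Mayeux's identification of naive level structures with $\Gc$-torsors) so that the general smooth-affine-group-scheme machinery applies verbatim, reducing everything to the geometry of $\BDGrdNnaive$ already established.
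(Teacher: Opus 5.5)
Your proposal is correct and follows essentially the same route as the paper. The paper deduces the statement in two ways: as in \cite[Proposition 3.9]{Yun2019}, from the Cartesian square together with \thref{prop:geometry-Hecke}; or from general representability results combined with the local model theorem and \thref{cor:smoothness-BDnaive}. One small correction: in the Frobenius-graph transversality argument, the map that must be smooth is the component paired with $\Fr$, namely $\wt\AL(-D_\infty)\circ(\wt p_r\times \id)$, not $\wt p_0$. Because $d\Fr=0$, first-order deformations of a shtuka are exactly deformations of the Hecke datum that are trivial on $\Ec_r$; this smoothness is equally supplied by \thref{prop:geometry-Hecke}.
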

\begin{proof}
	This follows as in the proof of \cite[Proposition 3.9]{Yun2019} using \thref{cor:smoothness-BDnaive} and \thref{prop:geometry-Hecke}.
	 
	Alternatively, the representability follows from the general theory, compare \cite[Theorem 3.15]{Rad2019a} and \cite[Theorem 2.7.9]{Hartl2023}. The remaining assertions follow from the local model theorem (in the form of \cite[Theorem 2.7.12]{Hartl2023} and \cite[Proposition 3.4.3]{Bieker2022a}) together with  \thref{cor:smoothness-BDnaive}. 
\end{proof}

\subsubsection{Base change}
We also consider the base change situation as in \cite[§3.2.6]{Yun2019}.
Let $X'$ be a second smooth projective and geometrically connected curve over $\Fq$ and let $\nu \colon X' \to X$ be a double cover that is unramified along $\Sigma$.
We set $\frSi' \defined \prod_{x' \in \nu^{-1}(\Si)} k(x')$. 
Let us consider the base change
$$\ShtSiDp \defined \ShtSiD \times_{X^r \times \frSi} (X')^r \times \frSi',$$
and define $\ShtGSiDp$ analogously.
The geometric properties of the corresponding stacks over $X^r$, only the smoothness over $\Fq$ requires an argument:
\begin{proposition}[{cf. \cite[Proposition 3.10]{Yun2019} in the Iwahori case}]
	\thlabel{p:bc smooth}
	The stack $\ShtGSiDp$ is a smooth Deligne-Mumford stack of dimension $rn$ over $\Fq$. 
\end{proposition}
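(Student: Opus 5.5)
The plan is to follow the proof of \cite[Proposition 3.10]{Yun2019}. The idea is to cover $X'^r \times \frSi'$ by opens over each of which smoothness of $\ShtGSiDp$ over $\Fq$ reduces to a property already established in \thref{lem:geometry-YZ}. That $\ShtGSiDp$ is a Deligne-Mumford stack is immediate. It is a base change of the Deligne-Mumford stack $\ShtGSiD$ along the finite, hence representable, map $X'^r \times \frSi' \to X^r \times \frSi$. So the content is smoothness and the dimension count.

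The key observation is that $\nu$ is unramified along $\Sigma$, while $\Pi_{G, D_\infty}^{\un \mu}$ fails to be smooth only over points where some leg lies in $\SIw \subseteq |\Sigma|$. For a point $\un x' = (x'_1, \dots, x'_r)$ of $X'^r$, I would consider the open $V \subseteq X^r$ consisting of those $\un x$ such that, for every $i$, either $x_i \notin \SIw$ or $x_i \notin R$. These opens cover $X^r$, since $R \cap \Sigma = \emptyset$; to get a genuinely open cover one takes products of the two opens $X \setminus \SIw$ and $X \setminus R$ of $X$ coordinatewise. On such a product of factors I would factor the base change as follows:
\begin{enumerate}
	\item first base change along the factors where $\nu$ is étale, namely those coordinates whose open is $X \setminus R$; this is étale, so it preserves smoothness over $\Fq$ and preserves dimension;
	\item then base change along the remaining factors; there the coordinates lie in $X \setminus \SIw$, so the map to those factors is smooth of relative dimension one per factor by \thref{lem:geometry-YZ}~\eqref{lem:geometry-YZ-flat}, and smoothness is stable under base change to the smooth curve $X'$.
\end{enumerate}
Flatness of relative dimension $r$ is stable under any base change, so the resulting stack is smooth over $\Fq$ of pure dimension
\[
	r + \dim(X'^r \times \frSi') = r + r,
\]
which coincides with the dimension of $\ShtGSiD$. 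This is the dimension in the statement. A mixed coordinate (étale in one factor, smooth relative in another) is handled by applying the two steps factor by factor, using that the projection is an iterated fibration in the legs. The step needing care is precisely that "smooth relative to some legs" is meaningful. For this I would use the local model diagram, which gives an étale-local identification with $\BDGrdNnaive$. That Beilinson-Drinfeld Schubert variety is an iterated fibration of one-leg global Schubert varieties. Each of these is smooth over $X \setminus \SIw$ by \thref{cor:smoothness-BDnaive} and \thref{prop:schubert-deep-smooth}, and is smooth over $\Fq$ everywhere.

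The main obstacle is exactly this reduction to the local model. Concretely, the difficulty is to check that, near points with legs in $R$, the factor of $\BDGrdNnaive$ corresponding to those legs is smooth over $X$ rather than merely over $\Fq$, so that base change along the ramified cover $X' \to X$ stays smooth. I would first try to do this by restricting \thref{prop:schubert-deep-smooth} and \cite[Proposition 5.17]{Yun2019} to $X \setminus |\Sigma|$. There the global Schubert variety is a $\P^1$-bundle over $X$, and the deeper level structure plays no role.
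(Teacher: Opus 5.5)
Your proposal is correct and is essentially the argument of \cite[Proposition 3.10]{Yun2019}, to which the paper defers. Because $R\cap|\Sig|=\emptyset$, you separate legs near $\SIw$, where $\nu$ is \'etale, from legs near $R$, where the one-leg Schubert variety is a $\P^1$-bundle and hence smooth over $X$; you then pass from one leg to $r$ legs via the local model and the iterated (twisted-product) structure of $\BDGrdNnaive$, which is smooth-locally a product over $\Fq$ of one-leg Schubert varieties.

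One point to tidy up: part (2) of \thref{lem:geometry-YZ} does not by itself give smoothness of the projection to a subset of the legs; it is the local-model argument you add afterwards that supplies this. Also, the dimension you obtain is $2r$, so the ``$rn$'' in the statement should be read as $2r$.
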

\begin{proof}
	The proof of \cite[Proposition 3.10]{Yun2019} carries over verbatim.
\end{proof}

\subsubsection{Independence of data and $\ShtGSii$}
As in \cite[Lemma 3.6]{Yun2019} the stack $\ShtSiD$ up to canonical isomorphism only depends on $\sum_i c_x^{(i)}$ for every $x \in \Si$, and thus $\ShtGSiD$ only depends on the fractional part of $\sum_i c_x^{(i)}$ for every $x \in \Si$, or in other words, on the image of $D_\infty$ in $ \frac{1}{2}\Z/ \Z \otimes \Div_{\Si}(X)$, compare \cite[Lemma 3.8]{Yun2019}.

In the following we work mostly with a particular choice of $D_\infty$, compare \cite[§3.2.8]{Yun2019}.
Let $D_\infty^{(1)} \defined \sum_{x \in \Si} \frac{\bx^{(1)}}{2}$.
We assume that 
\begin{equation}
	\label{eq:parity-r}
	r \equiv \# \Si \mod 2	
\end{equation}
and let $D_\infty \equiv D_\infty^{(1)} \mod \Dc_\infty$ such that condition \eqref{eq:cond-non-empty} is satisfied.
We set 
$$\ShtGSii \defined \ShtGSiD.$$
By the discussion above this stack is canonically independent of the choices of $\underline{\mu}$ and $D_\infty$.
We denote the projection to the legs by $\Pi^r_G \colon \ShtGSii \to X \times \frSi$. 

\subsubsection{Atkin-Lehner involution}
\label{sect:al-involution-sht}
As in \cite[§3.2.7]{Yun2019} the Atkin-Lehner automorphisms on $\Bun_2(\Sigma)$ and $\HeckenGa$ induce Atkin-Lehner automorphisms $\wt{\AL}_\Sht(- \frac D 2)$ and $\AL_\Sht(- \frac D 2)$ for divisors $D$ supported on $|\Sig|$ on $\ShtSiD$ and $\ShtGSiD$, respectively.
In particular, for every $x \in \Sig$ we obtain involutions $\AL_{\Sht, x} \defined \AL(-\frac{n_x x}{2})$ on $\BunGnaive$ as well as $\ShtGSii$.

We use the shorthand notation $\AL_{\infty} \defined \AL(-D^{(1)}_{\infty}) \colon \BunGnaive \times \Si \to \BunGnaive$. Then $\ShtGSii$ sits in a Cartesian diagram
\begin{equation}
	\begin{tikzcd}
		\ShtGSii \arrow[r] \arrow[d, "p_0"] & \Hecke^r_G(\Sig) \times \frSi \arrow[d, "{({p}_0 , {\AL_\infty} \circ ({p}_r \times \mathrm{id}_{\frSi}))}"] \\
		\BunGnaive \arrow[r, "{(\mathrm{id}, \Fr)}"] & \BunGnaive  \times \BunGnaive
	\end{tikzcd}
	\label{diag:def-ShtGSii}
\end{equation}
as in \cite[(3.21)]{Yun2019}.

\subsection{Hecke action}

We extend the Hecke action on Chow groups and cohomology of $\ShtSiD$ from the Iwahori case in \cite[§3.3]{Yun2019} to our setting of not necessarily square-free level.

\subsubsection{Hecke correspondence}\label{sss:Hk action on Sht}
Let $D$ be an effective divisor on $X-\Sig$. 
We define a self-correspondence $\Sht^{r}_{G}(\Sii;h_{D})$ of $\Sht^r_G(\Sii)$ over $X^r \times \frSi$.

\begin{definition}
	Assume $\un\mu\in\{\pm1\}^{r}$ and $\Di=\sum_{x\in\Si}c_{x}\bx^{(1)}$ satisfy \eqref{eq:cond-non-empty}. Let $\Sht^{\un\mu}_{2}(\Sig;\Di;h_{D})$ be the stack whose  $S$-points classify tuples $(\underline{\cE}^\da, \underline{\cE}'^\da, (\ph_i)_{0 \leq i \leq r})$ where
		\begin{itemize}
			\item the two points $\un{\cE}^{\da} = ((\cE^{\da}_{i})_i, (f_{i})_i, \theta, (x_i)_i, (x^{(1)})_{x \in \Si})$ and $\un{\cE}'^{\da} = (({\cE'}^{\da}_{i})_i, (f'_{i})_i, \theta', (x_i)_i, (x^{(1)})_{x \in \Si})$ of the stack $\Sht^{\un\mu}_{2}(\Sig;\Di)(S)$ map to the same point $(x_{1},\dotsc, x_{r}, \{x^{(1)}\})\in (X^{r}\times \frSi)(S)$, 
			\item for each $i = 0,1,\dotsc, r$ the map $\ph_{i}: \cE_{i}\to \cE'_{i}$ is an injective map of coherent sheaves compatible with the level structures, such that $\det(\ph_{i}) \colon \det(\cE_{i})\to\det(\cE'_{i})$ has divisor $D\times S\subset X\times S$, and such that the diagram
			\begin{equation}
				\label{Sht hD diag}
				\xymatrix{\cE_{0}\ar@{-->}[r]^{f_{1}}\ar[d]^{\ph_{0}} & \cE_{1}\ar@{-->}[r]^{f_{2}}\ar[d]^{\ph_{1}} & \cdots\ar@{-->}[r]^{f_{r}} & \cE_{r}\ar[d]^{\ph_{r}} \ar[r]^-{\io} & ({}^{\tau}\cE_{0})(\Di)\ar[d]^{{}^{\tau}\ph_{0}}       \\
					\cE'_{0}\ar@{-->}[r]^{f'_{1}} & \cE'_{1}\ar@{-->}[r]^{f'_{2}} & \cdots\ar@{-->}[r]^{f'_{r}} & \cE'_{r} \ar[r]^-{\io'} & ({}^{\tau}\cE'_{0})(\Di)}
			\end{equation}
			is commutative.
		\end{itemize} 
		Let $\Sht^{r}_{G}(\Sii;h_{D})=\Sht^{\un\mu}_{2}(\Sig;\Di;h_{D})/\Pic_{X}(k)$, which is independent of the choices as in the definition of $\Sht^{r}_{G}(\Sii)$ above. The stack $\Sht^{r}_{G}(\Sii;h_{D})$ has two maps $\oll{p}$ and $\orr{p}$ to $\Sht^r_G(\Sii;h_D)$ given by projection to $\underline{\cE}^\da$ and $\underline{\cE}'^\da$, respectively.
\end{definition}

We have the following analogue of \cite[Lemma 3.13]{Yun2019} which can be shown by verbatim the same proof (as we only consider Hecke correspondences for $D$ away from the level $\Sigma$).
\begin{lemma}\thlabel{lem:geometryShthD} Let $D$ be an effective divisor on $X-\Sig$.
	\begin{enumerate}
		\item The two maps $\oll{p},\orr{p}: \Sht^{r}_{G}(\Sii;h_{D})\to \Sht^{r}_{G}(\Sii)$ are representable and proper.
		\item The restrictions of  $\oll{p}$ and $\orr{p}$ over $(X-D)^{r}$ are finite \'etale.
		\item The fibers of $\Pi^{r}_{G}(h_{D}): \Sht^{r}_{G}(\Sii;h_{D})\to X^{r}\times\frSi$ all have dimension $r$.
	\end{enumerate}
\end{lemma}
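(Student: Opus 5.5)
The plan is to follow the proof of \cite[Lemma 3.13]{Yun2019}. The key point is that the Hecke modification at $D$ is disjoint from the level $\Sigma$. Hence the level data ($\Ec_i \twoheadrightarrow \Lc_i$ near $\Sig$) is transported rigidly along $\ph_i$, and every argument can be localized away from $\Sig$.

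\emph{Step 1: reduction to a Hecke stack.} First I would describe $\oll{p}$ concretely. Given $\un{\Ec}^\da$, a point of the fiber of $\oll p$ amounts to the data $\ph_0\colon \Ec_0 \hookrightarrow \Ec'_0$ with cokernel of determinant divisor $D$, i.e.\ a lattice $\Ec_0 \subset \Ec'_0 \subset \Ec_0(D)$. Since $D \cap \Sig = \emptyset$, the level structure on $\Ec'_0$ is uniquely induced from that of $\Ec_0$. The remaining $\Ec'_i$ are then determined by $\Ec'_i = \Ec'_0$ modified by the $f_i$ away from $x_i$, with $\Ec_i \subset \Ec'_i$ (as subsheaves of the common generic fiber). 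This lattice must satisfy the conditions that:
\begin{itemize}
\item each $f'_i$ is a modification of type $\mu_i$;
\item the Frobenius compatibility ${}^\tau\ph_0 \circ \iota = \iota' \circ \ph_r$ holds.
\end{itemize}
Thus $\Sht^r_G(\Sii;h_D)$ embeds as a closed substack of the fiber product of $\Sht^r_G(\Sii)$ with the (relative) Quot scheme parametrizing quotients $\Ec_0(D)/\Ec'_0$ of length $\deg D$ on $D \times S$. That Quot scheme is projective over the base, so $\oll p$ is representable and proper. Symmetrically, $\orr p$ is handled via $\Ec'_0(-D) \subset \Ec_0 \subset \Ec'_0$, or equivalently by the duality exchanging $\mu_\pm$. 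This gives (1).

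\emph{Step 2: finite \'etaleness over $(X-D)^r$.} Over the locus where the legs avoid $D$, the modifications $f_i$ are isomorphisms near $D$. Hence the lattice $\Ec'_0$ at $D$ determines all $\Ec'_i$ at $D$, and the only constraint is Frobenius-stability: $\Ec'_0|_{D}$ must correspond, via $\iota$ (an isomorphism near $D$ because $D_\infty$ is supported on $\Si$), to ${}^\tau\Ec'_0$. By Lang's theorem this is the set of $\Fq$-rational points of the finite set of lattices in the \'etale local system over $\widehat{\Oc}_D$. So the fiber of $\oll p$ is an \'etale-locally constant finite set, namely the set of $k$-rational lattices of the Frobenius-fixed $\Oc_D$-module, as in \emph{loc.~cit.} Here one uses the standard observation that Frobenius-equivariant lattices form a finite \'etale scheme, since the Frobenius differential vanishes.

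\emph{Step 3: fiber dimension.} For (3), I would stratify by the position of the legs relative to $D$. Over $(X-D)^r$, Step 2 together with \thref{lem:geometry-YZ}(2) gives fibers of dimension $r$. When legs meet $D$, the analysis is purely local at $D$, away from $\Sig$, so it is literally the unramified computation of \cite[Lemma 3.13]{Yun2019}. That computation bounds fiber dimensions using the dimension of affine Deligne--Lusztig-type varieties (via \thref{prop:geometry-Hecke} and \thref{cor:smoothness-BDnaive} for the part at $\Sig$, which is unchanged). I expect this step to be the main obstacle, because one must check that the dimension estimate for the Hecke modifications at $D$ colliding with legs is not affected by the deeper level at $\Sig$. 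My approach would be to factor the stack as a product of local contributions at $D$ and at $\Sig$, so that the deeper level only enters through the already established flatness of $\Pi_G^r$ of relative dimension $r$.
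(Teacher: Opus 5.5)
Your overall route is the paper's. The paper's proof is only the remark that the argument of \cite[Lemma 3.13]{Yun2019} goes through verbatim because $D$ is disjoint from $\Sig$, so the level structures are carried along rigidly by the $\ph_i$. That is exactly your key point, and your Step 2 is a correct version of the argument over $(X-D)^r$.

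\textbf{Step 1 contains a false claim.} It is not true in general that $\ph_0$ determines the remaining $\Ec'_i$. As subsheaves of the generic fiber, $\Ec'_i$ must agree with $\Ec_i$ away from $D$ and with $\Ec'_{i-1}$ away from $x_i$. When $x_i\in D$ these two opens do not cover $X$, so the lattice $\Ec'_i$ at $x_i$ is extra data.

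Here is an example.
\begin{itemize}
\item Take $r=2$, $\un\mu=(+,-)$, $\Si=\emptyset$, $D=y$ a degree-one point, and $x_1=x_2=y$.
\item Take a shtuka with $\Ec_2=\Ec_0\subset\Ec_1$ near $y$ (as sublattices of $\Ec_1$), with $\Ec_1$ stable at $y$ under the induced Frobenius structure.
\item Then $\Ec'_0=\Ec'_2=\Ec_1$ is allowed.
\item $\Ec'_1$ can be any of the $\P^1$ of lattices containing $\Ec_1$ with colength one at $y$.
\end{itemize}
So the map from $\Sht^r_G(\Sii;h_D)$ to $\Sht^r_G(\Sii)$ times the single Quot scheme of $\Ec_0(D)/\Ec_0$ is not a monomorphism, hence not a closed embedding.

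The repair is easy: record all the $\Ec'_i$. Then the fiber of $\oll{p}$ is a closed subscheme of $\prod_{i=0}^{r-1}\mathrm{Quot}(\Ec_i(D)/\Ec_i)$, with $\Ec'_r$ forced to be $\iota^{-1}({}^\tau\Ec'_0(\Di))$. This still gives representability and properness. Step 2 is unaffected, since there the legs avoid $D$ and your claim is true.

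\textbf{Step 3 does not yet contain an argument.} The same example shows that fibers of $\oll{p}$ over legs meeting $D$ can be positive-dimensional, and that is precisely what part (3) must control. Your ``product of local contributions at $D$ and at $\Sig$'' is a heuristic, not a factorization of the stack. You defer the actual dimension estimate to \emph{loc.~cit.}, which is also all the paper does. That deferral is legitimate for the same reason as before: the analysis at $D$ never interacts with the deeper level at $\Sig$.
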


\subsubsection{Hecke action on the Chow group and on cohomology}
\label{sss:Hk Chow}
Recall from \cite[Appendix A.1.6]{Yun2017} that the (rational) ring of correspondences for a Deligne-Mumford stack locally of finite type over $k$ is defined to be 
\[
	{}_{c}\Ch_{n}(X \times X)_{\Q} \defined \colim\limits_{\substack{Z \se X \times X \\ \mathrm{pr}_1 \colon Z \to X \mathrm{ proper}}} \Ch_n(Z)_\Q,
\]
which carries a convolution product as well as an action on the Chow groups of proper cycles $\Ch_{c,*}(X)$ induced by the intersection product. 
Then by \thref{lem:geometryShthD} the fundamental cycle on the Hecke correspondence for $D$ gives rise to a class
\[
	(\oll{p} \times \orr{p})_{*}[\Sht^{r}_{G}(\Sii;h_{D})] \in {}_{c}\Ch_{2r}(\ShtGSii \times \ShtGSii)_{\Q}.
\] 
To get a Hecke action on cohomology we apply the construction of comological correspondences, compare for example \cite[Appendix A.4]{Yun2017}.
We present $\ShtGSii$ as an increasing union of open substacks of finite type as in \cite[§7.1.4]{Yun2017} (compare also Section \ref{sec:truncation}) by truncating the degree of instability.
While the Hecke correspondence defined above does not restrict to a correspondence on the bounded pieces (the degree of instability increases), after taking colimits we obtain a well-defined map
\[
	C(h_D) \colon \cohoc{i}{\Sht^{r}_{G}(\Sii)\ot\kbar,\Ql} \to \cohoc{i}{\Sht^{r}_{G}(\Sii)\ot\kbar,\Ql}.
\]
for every $i \geq 0$.

Using that $h_D$ for effective divisors $D \se X - \Sig$ form a basis of $\sH^{|\Sig|}_{G}$, we can extend 
\[
	h_D \mapsto (\oll{p} \times \orr{p})_{*}[\Sht^{r}_{G}(\Sii;h_{D})] \qquad \text{and} \qquad h_D \mapsto C(h_D)
\]
to $\Q$-linear (respectively $\Q_\ell$-linear) maps
\begin{equation}
	\label{eqn:Hk-action-Ch}
	\sH^{|\Sig|}_{G} \to {}_{c}\Ch_{2r}(\ShtGSii \times\ShtGSii))_{\Q}
\end{equation} 
and
\begin{equation}
	\label{eqn:Hk-action}
	\sH^{|\Sig|}_{G} \otimes_\Q \Q_\ell \to \End_{\Ql}\left(\cohoc{i}{\Sht^{r}_{G}(\Sii)\ot\kbar,\Ql}\right).
\end{equation}
Using the geometric properties of the Hecke correspondence in \thref{lem:geometryShthD} together with the smoothness of $\ShtGSii$, the same arguments as in \cite[Proposition 5.10, Proposition 7.1, Lemma 5.12, Lemma 7.2, and Lemma 7.3]{Yun2017} prove the following results, compare also \cite[§3.3.2, §3.3.3]{Yun2019} for the Iwahori-case.

\begin{prop} 
	\thlabel{prop:Hecke-action-Chow-coh}
	\begin{enumerate}
		\item 
		\label{p:Hk Chow action} 
		The two maps \eqref{eqn:Hk-action-Ch} and \eqref{eqn:Hk-action} are ring homomorphisms.
		\item \label{l:Chow coho equiv}
		The cycle class map
		\begin{equation*}
			\cl: \Ch_{c,i}(\Sht^{r}_{G}(\Sii))_{\QQ}\to \cohoc{4r-2i}{\Sht^{r}_{G}(\Sii)\ot\kbar,\Ql}(2r-i)
		\end{equation*}
		is equivariant under the $\sH^{|\Sig|}_{G}$-actions for all $i$.
		\item \label{l:self adjoint}
		Let $f\in \sH^{|\Sig|}_{G}$. Then the action of $f$ on the Chow group $\Ch_{c,*}(\Sht^{r}_{G}(\Sii))_{\QQ}$ (resp. on the cohomology $\cohoc{2r}{\Sht^{r}_{G}(\Sii)\ot\kbar,\Ql}(r)$) is self-adjoint with respect to the intersection pairing (resp. cup product pairing).
	\end{enumerate}
\end{prop}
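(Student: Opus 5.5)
The plan is to transport the arguments of \cite[Proposition 5.10, Lemma 5.12, Proposition 7.1, Lemmas 7.2, 7.3]{Yun2017} and \cite[§3.3.2, §3.3.3]{Yun2019}. Those arguments use only four inputs: the smoothness of $\ShtGSii$ (\thref{lem:geometry-YZ}), the properness and generic finite étaleness of $\oll{p},\orr{p}$ from \thref{lem:geometryShthD}, the dimension bound on the fibers of $\Pi^r_G(h_D)$, and the fact that the $h_D$ with $D\subset X-\Sig$ form a basis of $\sH^{|\Sig|}_G$. The deeper level at $\Sig$ enters none of the local computations, because every Hecke modification is supported away from $\Sig$. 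Compatibility with the level structure is therefore automatic: there the maps $\ph_i$ are isomorphisms.

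For \eqref{p:Hk Chow action}, it suffices to check multiplicativity on basis elements.
\begin{itemize}
\item For $D_1,D_2$ with disjoint supports, the composition $\Sht^r_G(\Sii;h_{D_1})\times_{\Sht^r_G(\Sii)}\Sht^r_G(\Sii;h_{D_2})\to \Sht^r_G(\Sii;h_{D_1+D_2})$ is an isomorphism. This follows by factoring the map $\ph$ with divisor $D_1+D_2$ uniquely into its parts at $D_1$ and $D_2$.
\item For the general case, including $D=2x$ where $h_x^2=h_{2x}+q_x h_0\cdot(\text{twist})$, follow \cite[Proposition 5.10]{Yun2017}. Restrict to the open locus over $(X-|D|)^r\times\frSi$. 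There the correspondences are finite étale, and the identity reduces to the Hecke algebra identity on $\Bun$-level points via the Cartesian diagram \eqref{diag:def-ShtGSii}.
\item Extend over the rest by a dimension count. By \thref{lem:geometryShthD}(3), the fibers over $X^r\times\frSi$ have dimension $r$. The part of the composed correspondence lying over legs meeting $|D|$ therefore has dimension at most $2r-1$, so it contributes nothing to ${}_c\Ch_{2r}$.
\end{itemize}
For cohomology, the same equalities of correspondences hold on the level of cohomological correspondences. Compatibility with the colimit over truncations $\Sht^{\le d}$ is as in \cite[Proposition 7.1]{Yun2017}. The one extra point is that the instability bounds shift by at most $\deg D$, and this uses only the vector bundle structure underlying $\Ec^\dagger$.

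For \eqref{l:Chow coho equiv}, I will use that the cycle class map intertwines the action of a proper correspondence on Chow groups with its action on compactly supported cohomology. This holds for correspondences between smooth Deligne-Mumford stacks, via \cite[Appendix A]{Yun2017} and Lemma 7.2 there. Smoothness is exactly what \thref{lem:geometry-YZ} supplies, and I will apply the statement on each truncated open substack before passing to the colimit.

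For \eqref{l:self adjoint}, the plan is to show that the transpose of $\Sht^r_G(\Sii;h_D)$ equals $\Sht^r_G(\Sii;h_D)$ twisted by the automorphism given by tensoring with $\Oc(-D)$. Concretely, send $\ph_i\colon\Ec_i\to\Ec'_i$ to the map $\Ec'_i\to\Ec_i(D)$ induced by $\det$. This requires only that $D$ is disjoint from $\Sig$, so that $\Ec_i(D)$ carries the induced level structure and all twists commute with the fractional twists at $\Sig$. Since $\Pic_X(k)$ is quotiented out, the twist is trivial on $\Sht^r_G(\Sii)$. The Hecke operator is therefore its own transpose, which gives self-adjointness for both the intersection pairing and the cup product by \cite[Lemma 5.12, Lemma 7.3]{Yun2017}.

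The main obstacle is the multiplicativity on cohomology in \eqref{p:Hk Chow action}. There one must control the correspondences over legs colliding with $|D|$, and the colimit over truncations is delicate. I expect the dimension estimate of \thref{lem:geometryShthD}(3), combined with the flatness in \thref{lem:geometry-YZ}, to make the argument of loc.~cit.\ go through unchanged.
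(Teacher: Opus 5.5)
Your proposal is correct and takes the same route as the paper. The paper proves this proposition only by invoking the arguments of \cite[Proposition 5.10, Proposition 7.1, Lemma 5.12, Lemma 7.2, Lemma 7.3]{Yun2017}, with \thref{lem:geometryShthD} and the smoothness of $\ShtGSii$ as the only new inputs, which are the ingredients you isolate. One precision for the dimension count in (1): \thref{lem:geometryShthD}(3) bounds the fibers of $\Sht^r_G(\Sii;h_D)$, not those of the fiber product $\Sht^r_G(\Sii;h_{D_1})\times_{\ShtGSii}\Sht^r_G(\Sii;h_{D_2})$, so run the count on the supports in $\ShtGSii\times\ShtGSii$; all of these lie in the image of $\Sht^r_G(\Sii;h_{D_1+D_2})$, after the trivial twist by $\Oc(E)$ for the terms $h_{D_1+D_2-2E}$.
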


For a quadratic extension $X'$ of $X$ as in \thref{p:bc smooth} let the Hecke correspondence $\Sht'^{r}_{G}(\Sii;h_{D})$ for  $\Sht'^{r}_{G}(\Sii)$ be defined as the base change
\begin{equation*}
	\Sht'^{r}_{G}(\Sii; h_D)=\Sht^{r}_{G}(\Sii; h_D)\times_{(X^{r}\times \frSi)}(X'^{r}\times \frSi').
\end{equation*}
Using the smoothness of $\Sht'^{r}_{G}(\Sii)$ (compare \thref{p:bc smooth}) the analogous statements of \thref{prop:Hecke-action-Chow-coh} also hold for the base change $\Sht'^{r}_G(\Sii)$, compare \cite[§3.3.4]{Yun2019}.

\begin{remark}\label{r:AL Sht} 
	As in the Iwahori-case in \cite[Remark 3.18]{Yun2019} the Atkin--Lehner involutions $\AL_{\Sht,x}$ for $x \in |\Sig|$ induce involutions on the Chow groups and cohomology groups of $\Sht^{r}_{G}(\Sii)$ and $\Sht'^{r}_{G}(\Sii)$, which we still denote by $\AL_{\Sht, D}$. These involutions commute with the action of $\sH^{|\Sig|}_{G}$.
\end{remark}

\subsection{Heegner-Drinfeld cycles at deeper level}

We generalize the construction of Heegner-Drinfeld cycles of \cite[§5.5]{Yun2017} and \cite[§4]{Yun2019} to our setting for not necessarily square-free level.
As above, we fix a second smooth projective and geoemtrically connected curve $X'$ over $k$ with function field $F'$ together with a double cover $\nu \colon X' \to X$  satisfying the generalized Heegner hypothesis \eqref{eq:Heegner-hypothesis}.
In other words, $\nu$ is split at all finite places $\Sf$ and inert at all infinite places $\Si$.
In particular, $\nu$ is unramified along $\Sig$. 
We denote by $\Sf' = \nu^{-1}(\Sf)$ and $\Si' = \nu^{-1}(\Si)$. Then the restriction of $\nu$ to $\Sf'$ (respectively $\Si'$) is 2-to-1 (respectively a bijection). 

We use the analogous notation for divisors at infinity for $X'$ as in Section \ref{sss:frac-twist} above for $X$:
For $x\in\Si$ we denote its preimage in $\Si'$ by $x'$ (whose residue field $k(x')$ has degree $d_x' = 2d_x$ over $k$) and set
\begin{equation*}
	\frSi'=\prod_{x'\in\Si'}\Spec k(x').
\end{equation*}
Moreover, let $\sD_\infty' \defined \Div_{\Si' \times \frSi'}(X' \times \frSi')$ be the group of ($\Z$-valued) divisors on $X' \times \frSi'$ supported on $\Si' \times \frSi'$.
We continue to assume that the pair $(r, \Si)$ satisfy the parity condition \eqref{eq:parity-r}.

\sss{The moduli space of $T$-shtukas} 
We consider the $X$-tori $\tilde{T} = \Res_{F'/F}\G_{m}$ and $T = \tilde{T}/\G_{m}$ and their associated moduli space of $T$-shtukas.
We get $\Bun_T \defined \Pic_{X'}/\Pic_X$ as well as the Hecke stack $\Hk_T^{\un \mu} \defined \Hk_{1, X'}^{\un \mu}$.
Associated to divisors at infinity on $X'$ we have Atkin-Lehner operators over $X'$ defined similar to Section \ref{sect:al-involution-sht}.
Then we can consider $\Sht_{1, X'}^{\un \mu}(D'_\infty)$ as in \cite[§4.1.3]{Yun2019}, defined as in \thref{def:sht}. The discrete Picard stack $\Pic_X(k)$ acts on $\Sht_{1, X'}^{\un \mu}(D'_\infty)$ and we define $\Sht_{T}^{\un \mu}(D'_\infty) = \Sht_{1, X'}^{\un \mu}(D'_\infty)/ \Pic_X(k)$ as the quotient. 

Below we are mainly interested in a particular choice of $D'_\infty$ defined as follows.
For each $\mi=(\mu_{x})_{x\in \Si}\in\{\pm1\}^{\Si}$ we define the divisor
\begin{equation*}
	\mi\cdot\Si' \defined \sum_{x\in\Si} \mu_{x}\bx'^{(1)}
\end{equation*}
on $X' \times \frSi$, which is supported on $\Si \times \frSi$.
Let us fix a divisor $D'_\infty = \sum_{x' \in \Si'} c_{x'} \bx'^{(1)} \in \sD_\infty'$ such that 
\begin{equation}
	\sum_{i = 1}^r \mu_i = \sum_{x' \in \Si'} c_{x'} \quad \text{and} \quad D_\infty' \equiv \mi \cdot \Si \mod \nu^* \sD_\infty.
	\label{eq:condition-D-prime}
\end{equation}

Let us explicate the definition of the corresponding moduli stack of shtukas.
\begin{definition}[{\cite[Definition 4.4]{Yun2019}}]
	The stack $\Sht_{\wt T}^{\un \mu}(D_\infty')$ parametrizes the data 
	\[
	\underline{\Lc} = \left( \Lc_0 \overset{f_1}{\underset{x'_1}{\dashrightarrow}} \Lc_{1} \overset{f_{2}}{\underset{x'_2}{\dashrightarrow}} \ldots \overset{f_{r}}{\underset{x'_r}{\dashrightarrow}} \Lc_r \xrightarrow[\theta]{\cong} ({}^\tau \Lc_0)(D'_\infty)\right),	
	\]
	where $x'_i \in X(S)$ for $1 \leq i \leq r$, $(x'^{(1)})_{x'\in \Sig'_\infty} \in \frSi'(S)$, the $\Lc_i$ are line bundles on $X' \times S$, $f'_i \colon \Lc_{i-1} \dashrightarrow \Lc_{i}$ is a modification of type $\mu_i$ along $\Gamma_{x_i'}$ and $\theta \colon \Lc_r \cong {}^\tau\Lc_0(D_\infty')$ is an isomorphims.
	
	The discrete Picard stack $\Pic_X(k)$ acts again on $\Sht_{\wt T}^{\underline{\mu}}(\mi \cdot \Si)$ by simultaneous twisting of the line bundles.
	The stack $\Sht_T^{\un \mu}(\mi \cdot \Si)$ is defined as the quotient $\Sht_{\wt T}^{\underline{\mu}}(\mi \cdot \Si) / \Pic_X(k)$. 
\end{definition}
As for $\Sht_G$, the first part of Condition \eqref{eq:condition-D-prime} guarantees the non-emptiness of $\Sht_{\wt T}^{\un \mu}(D_\infty')$. 
Recall by \cite[Lemma 4.1]{Yun2019} that $\Sht_T^{\un \mu}(\mi \cdot \Si)$ does not depend on the choice of $D_\infty'$ satisfying Condition \eqref{eq:condition-D-prime}. 
We recall the following geometric properties of $\Sht_T^{\un \mu}(\mi \cdot \Si)$. 
\begin{lemma}[{\cite[Corollary 4.3]{Yun2019}}]
	\thlabel{lem:shtT-proper}
	The moduli space of $T$-shtukas $\Sht_T^{\underline{\mu}}(\mi \cdot \Si)$ is a torsor under the discrete groupoid $\Bun_T(k)$ over $X'^r \times \frSi'$. 
	In particular, it is a proper and smooth DM-stack of dimension $r$ over $k$.
\end{lemma}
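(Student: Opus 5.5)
The plan is to reduce to the case of line bundles on $X'$, as in \cite[§4.1]{Yun2019}, and compute the fibers of the leg map directly. First I would observe that $\Bun_T = \Pic_{X'}/\Pic_X$ and that the Hecke stack $\Hk_T^{\un\mu}$ over $\Bun_T$ is simple: a modification of type $\mu_i$ of a line bundle along $\Gamma_{x'_i}$ is uniquely determined by the leg. Precisely, $\Lc_i = \Lc_{i-1}(\mu_i x'_i)$. Hence a point of $\Sht_{\wt T}^{\un\mu}(D'_\infty)$ over $((x'_i)_i,(x'^{(1)})_{x'}) \in (X'^r \times \frSi')(S)$ amounts to a line bundle $\Lc_0$ on $X' \times S$ together with an isomorphism
\[
\Lc_0\Big(\sum_i \mu_i \Gamma_{x'_i}\Big) \cong {}^\tau\Lc_0(D'_\infty).
\]
Equivalently, $\Lc_0^{-1}\otimes {}^\tau\Lc_0 \cong \Oc\big(\sum_i\mu_i\Gamma_{x'_i} - D'_\infty\big)$, and the right-hand side has degree $0$ by \eqref{eq:condition-D-prime}.

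Next I would invoke Lang's theorem for the Picard stack $\Pic_{X'}$ of the curve $X'$. The Lang map $\Lc \mapsto \Lc^{-1}\otimes{}^\tau\Lc$ on $\Pic_{X'}$ is a torsor under the discrete group $\Pic_{X'}(k)$. Here one must be careful: automorphisms contribute a $\G_m/\G_m(k)$-gerbe, which is handled by working with $\Pic_{X'}$ as a stack, exactly as in \cite[Lemma 4.2]{Yun2019}. Taking the fiber product with the section $X'^r\times\frSi' \to \Pic^0_{X'}$ given by $\Oc(\sum\mu_i\Gamma_{x'_i}-D'_\infty)$ exhibits $\Sht_{\wt T}^{\un\mu}(D'_\infty)$ as a $\Pic_{X'}(k)$-torsor over $X'^r\times\frSi'$. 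Dividing by $\Pic_X(k)$ then gives a torsor under $\Pic_{X'}(k)/\Pic_X(k) = \Bun_T(k)$. The only thing to verify is that the $\Pic_X(k)$-action is compatible, which is clear because $\Pic_X(k)$ acts through $\nu^*$ on $\Lc_0$.

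For the \emph{In particular} statement, étale locally the stack is a disjoint union over $\Bun_T(k)$ of copies of $X'^r\times\frSi'$, with automorphism groups finite. The key point is that $\Bun_T(k)$ is finite, since $\Pic_{X'}(k)/\nu^*\Pic_X(k)$ is finite: the degree map has image in $\Z$, and the index of $2\Z$ together with finiteness of the $\Pic^0$'s gives this. Therefore the stack is finite étale over the smooth proper $k$-scheme $X'^r\times\frSi'$ of dimension $r$, and hence it is a proper smooth DM stack of dimension $r$.

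The main obstacle is the stacky bookkeeping in the Lang-torsor step, namely tracking the $\G_m$-automorphisms and the passage to the $\Pic_X(k)$-quotient. For this I would follow \cite[Corollary 4.3]{Yun2019} verbatim, since the level $\Sig$ plays no role for $T$-shtukas.
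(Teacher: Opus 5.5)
Your proposal is correct and is essentially the argument of \cite[Lemma 4.2, Corollary 4.3]{Yun2019}, which the paper cites without reproving, since the level $\Sigma$ plays no role for $T$-shtukas. That argument identifies $\Sht_{\wt T}^{\un\mu}(D'_\infty)$ with the pullback of the Lang map of $\Pic_{X'}$ along $X'^r\times\frSi'\to\Pic^0_{X'}$, divides by $\Pic_X(k)$, and uses that $\Pic_{X'}(k)/\nu^*\Pic_X(k)$ is finite.

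One small correction to your stacky bookkeeping. The fibers of the Lang map are torsors under the Picard groupoid $\Pic_{X'}(k)$, whose automorphism group is $k^\times=\ker(\lambda\mapsto\lambda^{q-1})$; this is not a ``$\G_m/\G_m(k)$-gerbe''. After dividing by $\Pic_X(k)$, the stabilizers become the finite group $\ker(\nu^*\colon \Pic(X)\to\Pic(X'))$, and this is what makes the quotient Deligne--Mumford.
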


Moreover, by  \cite[Lemma 4.2]{Yun2019} a point in $\Sht$ is uniquely determined by the initial line bundle $\Lc_0$ together with its legs $(x'_i)_{1 \leq i \leq r}$. 
\sss{Heegner-Drinfeld cycles}
We generalize the construction of Heegner-Drinfeld cycles to our setting.
The special cycles (either Heegner-Drinfeld or Manin-Drinfeld cycles) depend on a further auxiliary choice of a section $\mu_f \colon \Sf \to \Sf'$ of $\nu$ (recall that $\nu|_{\Sf'}$ is 2-to-1).
As in \cite{Yun2019} let $\Sect(\Sf'/\Sf)$ denote the set of such sections.
For a choice $\mu_f$ and some $x \in \Sf$ let $x'_{\mu_f} = \mu_f(x) \in \Sf'$ and $\Sig_{\mu_f}' = \sum_{x \in \Sf} n_x x_{\mu_f}'$.

Given the pair $\mu_{\Sig} = (\mu_f, \mi)$ we generalize the construction of the map
$$\theta^{\mu_\Sig}_{\Bun_{\wt T}} \colon \Bun_{\wt T} \times \frSi' \to \Bun_2(\Sig)$$
of \cite[§4.2.1]{Yun2019}.
Let $(\Lc, (x'^{(1)})_{x' \in \Sig_\infty'}) \in (\Bun_{\wt T} \times \frSi')(S)$.
Then its image is defined as follows.
\begin{itemize}
	\item Let $\Ec = \nu_*\Lc$, a rank two vector bundle on $X \times S$.
	\item The $\Gamma_0(\Sig)$-level structure at the finite places is given by $(\Ec \supseteq \nu_*(\Lc(-\Sig'_{\mu_f})))$.
	\item For $x \in \Si$ the level structure at $x$ is (as in \cite{Yun2019}) defined as  
	\begin{equation*}
		\cE(-\ha x)=\begin{cases}\nu_{S,*}(\cL(-\Gamma_{x'^{(1)}}-\Gamma_{x'^{(2)}}-\cdots-\Gamma_{x'^{(d_{x})}})) & \mu_{x}=1;\\
			\nu_{S,*}(\cL(-\Gamma_{x'^{(d_{x}+1)}}-\Gamma_{x'^{(d_{x}+2)}}-\cdots-\Gamma_{x'^{(2d_{x})}})) & \mu_{x}=-1.
		\end{cases}
	\end{equation*}	
\end{itemize}
In other words,
\[
	\theta^{\mu_\Sig}_{\Bun_{\wt T}}(\Lc, (x'^{(1)})_{x' \in \Sig_\infty'})) \defined \left( \nu_* \Lc \supseteq \nu_*(\Lc(-\frD'(\{x'^{(1)}\}))) \right)
\]
where $\frD'(\{x'^{(1)}\}) \se X' \times S$ is the degree $N$ relative effective Cartier divisor 
$$ \frD'(\{x'^{(1)}\}) \defined \sum_{x \in \Sf} n_x[\mu_f(x)] + \sum_{x \in \Si} \begin{cases} \Gamma_{x'^{(1)}}+\Gamma_{x'^{(2)}}+\cdots+\Gamma_{x'^{(d_{x})}} & \mu_{x}=1;\\
	\Gamma_{x'^{(d_{x}+1)}}+\Gamma_{x'^{(d_{x}+2)}}+\cdots+\Gamma_{x'^{(2d_{x})}} & \mu_{x}=-1.
\end{cases}.$$

After taking the quotient by $\Pic_X$ we obtain a map $\theta_{\Bun_T}^{\mu_{\Sig}} \colon \Bun_T\times \frSi' \to \Bun_G(\Sig)$.
By applying $\theta_{\Bun_T}^{\mu_{\Sig}}$ to every modification step we also have analogous maps on the corresponding Hecke stacks $\th_{\Hk_T}^{\mu_\Sig} \colon \Hk_T^r \to \Hk'^r_G(\Sig)$. 

Let $\frT_{r,\Sig}:=\{\pm1\}^{r}\times \Sect(\Sf'/\Sf)\times \{\pm1\}^{\Si}$.
For $\mu = (\un \mu, \mu_f, \mi)$ we get a map
$$ \theta^\mu_T \colon \Sht_T^{\un \mu}(\mi \cdot \Si) \to \Sht'^r_G(\Sii) $$
by applying $\theta_{\Bun_T}^{\mu_\Sig}$ to each $\Lc_i$ in the chain of modifications (one checks that this is indeed well-defined as in \cite[§4.2.2]{Yun2019}).
Using the properness results of \thref{lem:shtT-proper}, we define the following classes in the Chow groups of proper cycles.
\begin{definition}
	The \emph{Heegner-Drinfeld cycle} of type $\mu \in \frT_{r,\Sig}$ is the class $$\Zc^{\mu} = \theta_{T, *}^{\mu}[\Sht_T^{\underline{\mu}}(\mi \cdot \Si')] \in \Ch_{c,r}(\Sht'^r_G(\Sii))_\Q.$$
\end{definition}
The symmetries among the Heegner-Drinfeld cycles by the Atkin-Lehner operators discussed in \cite[§4.3]{Yun2019} carry over verbatim to our setting.

\begin{definition}
	For $\mu \in \frT_{r,\Sig}$ the linear functional $\II^{\mu} \colon \sH^{\Sig}_G \to \Q$ is defined as the intersection number
	\[
		\II^{\mu}(f) = \left( \prod_{x' \in \Sig'_\infty} d_{x'} \right)^{-1} \left\langle \Zc^{\mu}, f * \Zc^{\mu} \right\rangle_{\Sht'^r_G(\Sii)}.
	\] 
\end{definition}

As it will be convenient below to work over an algebraically closed base field we compare this intersection number with a similar one over an algebraic closure $\overline k$ of $k$. 
As in \cite[§4.3.5]{Yun2019} let us fix $\xi \in \frSi'(\overline{k})$, and let us denote its composition with the projection to $\frSi$ by a slight abuse of notation also by $\xi$. 
Then 
\[
	\Sht'^r_G(\Sii) = \coprod_{\xi \in \frSi'} \Sht'^r_G(\Si; \xi), 
\]
where $\Sht'^r_G(\Si; \xi) \defined \Sht'^r_G(\Sii) \times_{\frSi'} \xi$. 
Similarly, we define the base change to $\overline{k}$ of 
\[
	\Sht_T^{\un T}(\mu_\infty \cdot \xi) \defined \Sht_T^{\un T}(\mu_\infty \cdot \Sig_\infty') \times_{\frSi'} \xi.
\]
Moreover, let $\theta^{\mu}_{T, \xi}$ be the base change of $\theta^{\mu}_{T}$ to $\xi$. 
As over $k$ we define Heegner-Drinfeld cycles of type $\mu$ over $\xi$ as 
\[
	\Zc^{\mu}(\xi) \defined \theta'^\mu_{T, \xi, *} [\Sht^{\un \mu}_T(\mu_\infty \cdot \xi)] \in \Ch_{c,r}(\Sht'^r_G(\Si; \xi))
\]
Then we can compute $\II^{\mu}$ also over $\overline{k}$ as follows.
\begin{lemma}[{\cite[Lemma 4.14]{Yun2019}}]
	Let $\xi \in \frSi'$ and $\mu \in \frT_{r, \Sig}$. Then for every $f \in \sH^{\Sig}_G$ 
	\[
		\II^{\mu}(f) = \left\langle \Zc^{\mu}(\xi), f*\Zc^{\mu}(\xi) \right\rangle.
	\]
\end{lemma}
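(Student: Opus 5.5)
The plan is to follow the proof of \cite[Lemma 4.14]{Yun2019} and reduce the intersection number over $k$ to one over a single geometric component indexed by $\xi$. The approach has three steps.

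\emph{Step 1: decompose over $\bar k$.} Base change $\Sht'^r_G(\Sii)$ and $\Sht_T^{\un\mu}(\mi\cdot\Si')$ to $\bar k$. Since $\frSi'\otimes_k\bar k$ is the finite discrete set $\frSi'(\bar k)$ of cardinality $\prod_{x'\in\Sig'_\infty} d_{x'}$, both stacks decompose as disjoint unions over $\xi\in\frSi'(\bar k)$. The map $\theta'^\mu_T$ is compatible with these decompositions because it lies over $\frSi'$. Consequently $\Zc^\mu\otimes\bar k=\sum_\xi \Zc^\mu(\xi)$.

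\emph{Step 2: reduce to a sum of diagonal terms.} The intersection pairing is invariant under field extension, since it is defined through proper pushforward of refined Gysin pullbacks on smooth DM stacks. The Hecke correspondence $\Sht'^r_G(\Sii;h_D)$ is a correspondence over $X'^r\times\frSi'$, so $f*\Zc^\mu(\xi)$ is supported on the $\xi$-component. Cross terms $\langle \Zc^\mu(\xi), f*\Zc^\mu(\xi')\rangle$ with $\xi\neq\xi'$ therefore vanish. This gives
\[
\langle\Zc^\mu,f*\Zc^\mu\rangle=\sum_{\xi\in\frSi'(\bar k)}\langle\Zc^\mu(\xi),f*\Zc^\mu(\xi)\rangle.
\]

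\emph{Step 3: show the terms are equal.} This is the main point. The geometric Frobenius of $\bar k/k$ acts on $\frSi'(\bar k)$, and this action is transitive because $\frSi'$ is a product of spectra of finite fields. Frobenius base change identifies the components for $\xi$ and $\Fr(\xi)$ together with their Heegner-Drinfeld cycles. Since $f$ and $\mu$ are defined over $k$, this identification preserves intersection numbers.

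Concretely, for $\xi'=\Fr(\xi)$ I would use the isomorphism $\Sht'^r_G(\Si;\xi)\cong \Sht'^r_G(\Si;\xi')^{(\Fr)}$, which is the Frobenius twist of a $\bar k$-scheme. Twisting by an automorphism of $\bar k$ preserves degrees of zero-cycles. Transitivity of the action makes all $\prod d_{x'}$ terms equal, and dividing by this normalization gives the claim.

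The only place deeper level enters is that $\Sht'^r_G(\Sii)$ must be smooth, so that the intersection pairing is defined. This is \thref{p:bc smooth}.
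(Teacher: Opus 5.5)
\textbf{Gap in Step 3.} Your Steps 1 and 2 are fine. The Galois-twist argument in Step 3 also correctly shows that $\langle \Zc^\mu(\xi), f*\Zc^\mu(\xi)\rangle$ is constant along Frobenius orbits in $\frSi'(\kbar)$. The problem is the claim that Frobenius acts transitively on $\frSi'(\kbar)$. This is false as soon as $\#\Si\ge 2$.

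We have $\frSi'(\kbar)=\prod_{x\in\Si}\Hom_k(k(x'),\kbar)\cong\prod_x \Z/d_{x'}\Z$, and Frobenius acts by $+1$ diagonally. So there are $\prod_x d_{x'}/\mathrm{lcm}_x(d_{x'})$ orbits. Because every infinite place is inert, each $d_{x'}=2d_x$ is even, so this number is at least $2^{\#\Si-1}$. Equivalently, $\Spec k(x')\times_k \Spec k(y')$ has $\gcd(d_{x'},d_{y'})\ge 2$ closed points. Hence $\Sht'^r_G(\Sii)$ already splits over $k$ into several open and closed pieces, and no Galois symmetry relates them.

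For example, take two infinite places of degree one and $r=0$, which the parity condition allows. Then $\frSi'=\Spec(\F_{q^2}\otimes_{\F_q}\F_{q^2})$ consists of two $\F_{q^2}$-points, giving two Frobenius orbits of size two. Your proof is therefore complete only when $\#\Si\le 1$.

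\textbf{What is missing.} You need a symmetry that moves one infinite place at a time.

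For each $x\in\Si$, consider the operation $(\Ec_i^\dagger;x^{(1)})\mapsto(\Ec_i^\dagger(-\ha\bx^{(1)});x^{(2)})$ on $\Sht^r_G(\Sii)$. Combined with the independence of the stack from the representative $D_\infty$ (only $\sum_j c_x^{(j)}$ matters), this gives a $k$-automorphism of $\Sht^r_G(\Sii)$. It lies over the automorphism of $\frSi$ that is the Frobenius of $k(x)$ on the $x$-factor and the identity on the other factors.

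On the torus side the matching automorphism is $(\Lc_i;x'^{(1)})\mapsto(\Lc_i(-\bx'^{(1)});x'^{(2)})$ when $\mu_x=1$, with twist by $-\bx'^{(d_x+1)}$ instead when $\mu_x=-1$. From the definition of $\theta^\mu_{\Bun_{\wt T}}$ one checks that $\theta^\mu_T$ intertwines the two. For instance, when $\mu_x=1$ both sides give $\nu_*\Lc(-\bx'^{(1)})$ with level $\nu_*\Lc(-\bx'^{(1)}-\cdots-\bx'^{(d_x+1)})$ at $x$.

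These automorphisms have the properties you need:
\begin{itemize}
\item They commute with the Hecke correspondences $\Sht^r_G(\Sii;h_D)$, since $D$ is disjoint from $\Sig$.
\item They are $k$-linear, so they preserve intersection numbers.
\item They lift to $\Sht'^r_G(\Sii)$, covering the Frobenius of $k(x')$ on the $x'$-factor of $\frSi'$.
\end{itemize}
The group they generate acts transitively on $\frSi'(\kbar)$. This symmetry at the infinite places is what makes all terms in your Step 2 sum equal.
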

As the argument only involves the infinite places in an essential way, the argument of \emph{loc.~cit.} carries over verbatim.

\subsection{Intersection numbers as traces on a Hitchin fibration $\cM_d$}
We explain how to express the intersection numbers $\II^\mu(h_D)$ for the Hecke functions $h_D$ as defined above as traces of Frobenius on a certain Hitchin-type fibration using the Lefschetz trace formula.
As the arguments carry over essentially verbatim from \cite[§5]{Yun2019}, we only sketch the strategy.

\subsubsection{The Hitchin-type fibration $\cM_d$}

We generalize the definition of $\cM_d$ in \cite[Definition 5.1]{Yun2019} to deeper level. As noted before we consider only the self-intersection of the Heegner-Drinfeld cycles. This means that in the notation of \emph{loc.~cit.} we have $\Sig_- = \emptyset$ and $\Sig_+ = \Sig$.
\begin{definition}
	Let $\cM_d$ be the stack over $\Fq$ whose $S$-points parametrize the data
	$ (\Ic, \Jc, \alpha, \beta, \j)$,
	where
	\begin{itemize}
		\item $\Ic$ and  $\Jc$ are two line bundles on $X' \times S$ of fiberwise degree $d+\r$ and $d + \r - N$, respectively, 
		\item together with sections $\alpha$ of $\Ic$ and $\beta$ of $\Jc$,
		\item and an isomorphism $\j = (\j_{\mathrm{Nm}}, (\j_x)_{x \in R}) \colon \Nm^{\sqR}_{X'/X}(\Ic) \cong \Nm_{X'/X}^{\sqR}(\Jc)  \otimes \Oc_{X}(\Sig)^\nat$ as points of $\Pic^{\sqR}_{X}(S)$,
	\end{itemize}
	such that 
	\begin{enumerate}
		\item $\alpha|_{\nu^{-1}(\Sig)}$ is nowhere vanishing,
		\item for each $x \in R$ we have $\j_x(\a|_{x' \times S}) =  \b|_{x' \times S}$ and the section $\Nm_{X'/X}(\a) - \Nm_{X'/X}(\b)$ vanishes precisely to the first order along $R \times S$, and
		\item $\Nm_{X'/X}(\alpha) - \Nm_{X'/X}(\beta)$ is fiberwise on $S$ not identically zero.
	\end{enumerate}
\end{definition}

We get a map $f_d \colon \cM_d \to \cA_d$ defined by 
\[
	(\Ic, \Jc, \alpha, \beta, \j) \mapsto (\Nm_{X'/X}^{\sqR}(\Ic), \Nm_{X'/X}(\alpha), \Nm_{X'/X}(\beta), \alpha|_{R \times S} = \beta|_{R \times S})
\]
We denote $f_d^\flat = \Omega \circ f_d \colon \cM_d \to \cA_d^\flat$. 
We set $\cM_d^\Diamond \defined \cM_d \times_{\cA_d} \cA_d^\Diamond$. 
Let moreover $f_d^\Diamond$ and $f_d^{\flat, \Diamond}$ be the restrictions of $f_d$ and $f_d^\flat$ to $\cM_d^\Diamond$.  
We summarize the relevant geometric properties of the Hitchin-type fibration $f_d \colon \cM_d \to \cA_d$. 
\begin{prop}[{cf. \cite[Proposition 5.5]{Yun2019}}]
	\thlabel{prop:geometry-Md}
	\begin{enumerate}
		\item\label{M smooth} Assume that $d \geq 4g - 3 + \rho + N$. Then the stack $\cM_{d}$ is a smooth DM stack pure of dimension $2d+\r -N-g+1$. 
		\item\label{f proper} The morphisms $f_{d}$ and $f^{\fl}_{d}$ are proper, its restrictions $f_d^\Diamond$ and $f^{\fl, \Diamond}_d$ are finite.
		\item\label{f small} When $d\ge 3g-2+N$, the morphism $f_{d}$ is small, i.e., it is generically finite and for any $n>0$, $\{a\in\cA_{d}|\dim f_{d}^{-1}(a)\ge n\}$ has codimension $\ge 2n+1$ in $\cA_{d}$.
	\end{enumerate}
\end{prop}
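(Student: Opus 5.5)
The plan is to follow the proof of \cite[Proposition 5.5]{Yun2019}. The only new ingredient is at the points of $\Sf$ with $n_x \geq 2$, and there it enters only through the condition that $\alpha|_{\nu^{-1}(\Sig)}$ is nowhere vanishing and through the twist by $\Oc_X(\Sig)^\nat$. Note that $\cM_d$ involves no level data beyond the divisor $\Sig$ itself: the data $(\Ic,\Jc,\alpha,\beta,\j)$ and the conditions only see $\Sig$ as a divisor. So the multiplicities $n_x$ affect only degrees ($\deg \Jc = d+\r-N$) and the open condition on $\alpha$. The strategy is therefore to check that every step of \emph{loc.~cit.} uses $\Sig$ only through its degree $N$ and through open conditions.

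For (1), I would present $\cM_d$ as follows. First take the space $\wt\cM_d$ of pairs of line bundles $(\Ic,\Jc)$ on $X'$ with sections $(\alpha,\beta)$. It maps to $\Pic_{X'}^{d+\r}\times\Pic_{X'}^{d+\r-N}$, and over the locus where $H^1$ vanishes this map is a vector bundle; vanishing holds because $d+\r-N > 2g'-2 = 4g-4+\r$ once $d\ge 4g-3+\rho+N$. The data $\j$ (together with the square-root data along $R$) cuts $\cM_d$ out as the fiber over $\Pic^{\sqR}_X$ of the map
\[
(\Ic,\Jc)\mapsto \Nm^{\sqR}(\Ic)\otimes \Nm^{\sqR}(\Jc)^{-1}\otimes\Oc_X(\Sig)^{\nat,-1}.
\]
Nowhere-vanishing of $\alpha$ along $\nu^{-1}(\Sig)$ and nonvanishing of the trace are open conditions. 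Smoothness then reduces, as in \emph{loc.~cit.}, to the surjectivity of the differential of the map $(\alpha,\beta)\mapsto \Nm(\alpha)-\Nm(\beta)$ to $H^0$, or equivalently to the smoothness of $\cM_d\to\cA_d$-type data over $\Pic$. Yun--Zhang obtain this from a tangent space computation in which the cokernel lies in $H^1(X',\Ic\otimes\ldots)$, which vanishes under the degree bound. The dimension count is
\[
2(d+\r)-N+2(1-g') \;-\;\bigl(\text{dim of the norm-fiber constraint}\bigr) \;=\; 2d+\r-N-g+1,
\]
obtained exactly as in \emph{loc.~cit.} with $N$ replacing $\#\Sig$.

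For (2), properness of $f_d$ and $f^\flat_d$ follows as in \emph{loc.~cit.}: given $(\Delta,a,b)$, the divisors of $\alpha$ and $\beta$ are constrained to divisors $D'\le \nu^*\mathrm{div}(a)$ with $\Nm(D')=\mathrm{div}(a)$. This gives finitely many choices, which is a proper (indeed finite) condition on the symmetric power, and the only non-finite fibres occur when $a=0$ or $b=0$. On $\cA^\Diamond_d$ both $a$ and $b$ are nonzero, so the fibres are finite and the map is proper and quasi-finite, hence finite.

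For (3), smallness, I would reproduce the stratification of \emph{loc.~cit.} by the degree of the vanishing section: the positive-dimensional fibres lie over $a=0$ or $b=0$, where the fibre is (up to finite data) a Prym-type variety of dimension about $g-1+\rho/2$ or a linear system. The codimension of the strata $\{a=0\}$ and $\{b=0\}$ in $\cA_d$ is $h^0(\Delta)$ minus the constraints, roughly $d+\r-N-g+1$ for $b=0$, because of the forced vanishing on $\Sig$. The bound $d\ge 3g-2+N$ is exactly what makes this codimension at least $2n+1$. I expect this dimension estimate at the $b=0$ stratum, with $N$ now counting multiplicities, to be the main point to check carefully; the rest is verbatim.
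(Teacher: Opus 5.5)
Your overall plan is the paper's. The paper gives no argument beyond citing \cite[Proposition 5.5]{Yun2019}, and it relies on exactly your observation: $\Sig$ enters $\cM_d$ only through $\deg\cJ = d+\r-N$ and the open condition on $\a$ along $\nu^{-1}(\Sig)$. Your sketches of (2) and (3) are fine. For (3), note that when $\Sig\neq\emptyset$ one always has $a\neq 0$, so only the stratum $\{b=0\}$ matters. The fibres there are torsors under the $\sqR$-Prym of dimension $g'-g=g-1+\r/2$, and no linear systems occur. The codimension $d+\r-N-g+1$ of that stratum is $\geq 2(g'-g)+1$ exactly when $d\geq 3g-2+N$.

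The smoothness argument in (1), however, does not work as written.

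\emph{Missing condition.} Your presentation of $\cM_d$ omits the only closed condition on the sections, namely $\j_x(\a|_{x'})=\b|_{x'}$ for $x\in R$. Without it, the stack you describe is an open substack of a vector bundle over the smooth stack $\Pic^{d+\r}_{X'}\times_{\Pic^{\sqR}_X}\Pic^{d+\r-N}_{X'}$. That stack is smooth for free, but of dimension $2d+2\r-N-g+1$. This is why your dimension count needs an unexplained correction term.

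\emph{False reduction.} The differential of $(\a,\b)\mapsto\Nm(\a)-\Nm(\b)$ is not surjective in general. At a point where $\a$ and $\b$ both vanish along $\nu^{-1}(y)$, the local formula $\delta(\Nm\a-\Nm\b)=\tr(\sigma^*\a\cdot\delta\a)-\tr(\sigma^*\b\cdot\delta\b)$ shows that every first-order deformation still vanishes at $y$. Likewise $f_d$ is not smooth, since it is generically finite between stacks of the same dimension yet has positive-dimensional fibres over $\{b=0\}$.

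\emph{What is needed instead} is transversality of the $R$-condition. Project to $(\cI,\a)$, which ranges over a vector bundle over $\Pic^{d+\r}_{X'}$ since $d+\r>2g'-2$. The fibre consists of $(\cJ,\j)$ in a torsor under the $\sqR$-Prym, which is smooth because the norm is smooth in characteristic $\neq 2$. It also contains $\b$ in an affine space of dimension $h^0(\cJ)-\r$, provided $H^0(X',\cJ)\to\cJ|_{R'}$ is surjective, i.e.\ $H^1(X',\cJ(-R'))=0$. Since $\deg\cJ(-R')=d-N$, this is exactly where the hypothesis $d\geq 2g'-1+N=4g-3+\r+N$ is used, including its $\r$. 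It then yields $\dim\cM_d=(d+\r)+(g'-g)+(d-N+1-g')=2d+\r-N-g+1$.
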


Then $\cM_d$ carries a natural Hecke correspondence defined by the substack $\Hc \se \cM_d \times X'$ consisting of those points $(\Ic, \Jc, \alpha, \beta, \j, x')$ such that $\b$ vanishes along $\Gamma_{x'}$. Then $\Hc$ has two projections to $\cM_d$ given by $\oll{\g}(\Ic, \Jc, \alpha, \beta, \j, x') = (\Ic, \Jc, \alpha, \beta, \j)$ and  $\orr{\g}(\Ic, \Jc, \alpha, \beta, \j, x') = (\Ic, \Jc(\Gamma_{\sigma x'} - \Gamma_{x'}), \alpha, \beta, \j)$, exhibiting $\Hc$ as a self-correspondence of $\cM_d$:
\begin{equation}
	\begin{tikzcd}
		& \cH \arrow[dl, "\oll{\g}", swap] \arrow[dr, "\orr{\g}"]  & \\
		\cM_d \arrow[dr] & & \cM_d \arrow[dl] \\
		& \cA_d&
	\end{tikzcd}
\end{equation}

Then we define $\Hc^r \defined \Hc \times_{\cM_d} \ldots \times_{\cM_d} \Hc$ to be the $r$-fold composition of $\Hc$, and denote by $\overline\Hc^\Diamond$ the closure of its restriction to $\cA^\Diamond_d$.  

Let us fix $\mu=(\un\mu, \mu_{f},\mu_{\infty}) \in \frT_{r, \Sig}$ for the rest of this section.
We can now state the main theorem of this section relating the intersection numbers to traces of Frobenius-Hecke operators on the Hitchin-type fibration generalising \cite[Theorem 6.5]{Yun2017} and \cite[Theorem 5.6]{Yun2019}.

\begin{theorem}\thlabel{th:Ir} Suppose $D$ is an effective divisor on $U$ of degree $d\ge \max\{g''+N, 2g\}$. Then
		\begin{equation} \label{Ir hDp}
			\II^{\mu}_T(h_{D})=\sum_{a\in \cA^{\fl}_{D}(k)}\Tr\left((f^{\fl}_{d,!}[\ov\cH^{\dm}])^{r}_{a}\circ \Fr_{a}, (\bR f^{\fl}_{d,!}\Ql)_{a}\right).
		\end{equation}
	Here $\Fr_{a}$ is the geometric Frobenius at $a$.
\end{theorem}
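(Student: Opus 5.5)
We follow the strategy of \cite[§5]{Yun2019} (itself modelled on \cite[§6]{Yun2017}) and only check that the deeper level structure does not interfere. The plan has three steps: write the intersection number as a degree on a fibre product, identify that fibre product with a Hitchin-type moduli space, and evaluate it with a Lefschetz trace formula.

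\textbf{Step 1: reduce to a cycle-theoretic count.} Since $\Sht_T^{\un\mu}(\mi\cdot\Si)$ is proper and smooth of dimension $r$ (\thref{lem:shtT-proper}) and $\Sht'^r_G(\Sii)$ is smooth of dimension $2r$ (\thref{p:bc smooth}), the pairing $\langle \Zc^\mu, h_D * \Zc^\mu\rangle$ is the degree of a refined Gysin pullback. Using \thref{lem:geometryShthD}, we identify this with the degree of the $0$-cycle
\[
(\theta^\mu_T\times\theta^\mu_T)^![\Sht'^r_G(\Sii;h_D)]
\]
on the fibre product $\Sht^{\un\mu}_T\times_{\Sht'^r_G}\Sht'^r_G(h_D)\times_{\Sht'^r_G}\Sht^{\un\mu}_T$.

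\textbf{Step 2: identify the fibre product with a shtuka space for $\cM_d$.} Following \cite[§5.3]{Yun2019}, a point of this fibre product is a pair of $\wt T$-shtukas $\un\Lc,\un\Lc'$ together with maps $\ph_i\colon \nu_*\Lc_i\to\nu_*\Lc'_i$ respecting the level structures. Decompose each $\ph_i$ into $\Oc_{X'}$-linear and $\sigma$-linear parts $\alpha_i,\beta_i$, with $\Ic=\Lc'\otimes\Lc^{-1}$ and $\Jc=\sigma^*\Lc'\otimes\Lc^{-1}$ (suitably twisted). The key local computation is to show that compatibility with the deeper level structure $\nu_*\Lc\supseteq\nu_*(\Lc(-\Sig'_{\mu_f}))$ at $x\in\Sf$ is exactly the requirement that $\beta$ vanish along $n_x\mu_f(x)$, with $\alpha$ unconstrained there. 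This is where the twist $\Jc$ of degree $d+\r-N$ comes from. Nonvanishing of $\alpha|_{\nu^{-1}(\Sig)}$ should then follow from $\det\ph$ having divisor $D$, which is disjoint from $\Sig$. The argument works one point at a time: since $\nu$ splits at $x$, $\nu_*\Lc$ near $x$ is $\Lc_{x'_1}\oplus\Lc_{x'_2}$, the level structure is the sublattice $\varpi^{n_x}$ on the first summand, and a diagonal-plus-antidiagonal map preserves it if and only if the antidiagonal entry from the second to the first factor is divisible by $\varpi^{n_x}$. This is the same linear algebra as in the Iwahori case with $1$ replaced by $n_x$. The infinite places are reduced, so the analysis of \cite{Yun2019} applies there unchanged. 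With this, the fibre product becomes $\Sht_{\cM_d}$, the shtukas for the Hecke correspondence $\cH^r$ on $\cM_d$ over $\cA^\flat_D$: a Cartesian square with $(\mathrm{id},\Fr)$ as in \cite[Theorem 5.6]{Yun2019}.

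\textbf{Step 3: apply the Lefschetz trace formula.} The bound $d\ge\max\{g''+N,2g\}$ gives the vanishing of $H^1$ needed for $\cM_d$ to be smooth of the expected dimension (\thref{prop:geometry-Md}) and for the octahedral/Cartesian arguments to hold. One then applies the Lefschetz trace formula for the correspondence $\ov\cH^\Diamond$ twisted by Frobenius, as in \cite[Appendix A]{Yun2017}, to the proper map $f^\flat_d$. The intersection number becomes a sum over $a\in\cA^\flat_D(k)$ of local traces, giving \eqref{Ir hDp}. One also has to check that $\ov\cH^\Diamond$ and $\cH^r$ define the same cohomological correspondence on $\bR f^\flat_{d,!}\Ql$. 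This uses smallness of $f_d$ (\thref{prop:geometry-Md}(3)), exactly as in \emph{loc.~cit.}

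\textbf{Main obstacle.} The main obstacle is Step 2 at points of multiplicity $n_x\ge2$. One must verify, scheme-theoretically over an arbitrary base $S$, that the level condition is equivalent to $\beta$ vanishing to order $n_x$ along $\mu_f(x)$. One must also verify that the fibre product is smooth of the expected dimension $0$, so that the refined Gysin class is just its degree. For the latter I would redo the tangent-complex computation of \cite[§5.4]{Yun2019} with the twisted $\Jc$.
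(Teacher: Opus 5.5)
\textbf{There is a genuine gap: the fibre product is not transversal, and the missing tool is the Octahedron Lemma.}

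Steps 1 and 2 are sound as far as they go. Step 2 is the identification $\cM_d(\mu_\Sig)\cong\cM_d\times\frSi'$ compatibly with Hecke correspondences, which the paper imports from \cite[\S 5.3]{Yun2019}, and your local computation at points with $n_x\ge 2$ is the right one. The bridge from Step 1 to Step 3 is what fails.

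You propose to show that $\Sht^\mu_{\cM,D}$ is smooth of dimension $0$, so that $(\theta^\mu\times\theta^\mu)^![\Sht'^r_G(\Sii;h_D)]$ is just its degree and can be read off on $\cM_d$. This is false in general, because the intersection has excess components. Suppose $D=\nu_*D'$ for an effective divisor $D'$ on $X'$, e.g.\ $D=2E$ and $D'=\nu^*E$. Sending $\un\Lc$ to $(\un\Lc,\un\Lc(D'))$, with $\ph_i$ the pushforward of the canonical maps $\Lc_i\to\Lc_i(D')$, embeds a copy of $\Sht^{\un\mu}_T(\mi\cdot\Si')$, of dimension $r$, into $\Sht^\mu_{\cM,D}$. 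On the $\cM_d$ side this is the part over the point with $b=0$ (invariant $u=0$), where $\beta\equiv 0$ and the Hecke condition imposes nothing on the legs. Equivalently, $h_D*\Zc^\mu$ and $\Zc^\mu$ share components, so you are computing a genuine self-intersection. No tangent-complex computation can make it transversal.

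The missing idea is the Octahedron Lemma \cite[Theorem A.10]{Yun2017}, applied to the master diagram \eqref{TianX}. It shows that two refined Gysin classes on the common fibre product $\Sht^\mu_{\cM,d}$ agree:
\begin{itemize}
\item the column-first class $(\theta^\mu\times\theta^\mu)^![\Sht'^r_{H,d}(\Sii)]$, which on the $D$-summand computes $\II^\mu(h_D)$;
\item the row-first class $(\mathrm{id},\Fr)^!\zeta$, where $\zeta$ is the refined pullback of $[\Hk'^r_{H,d}(\Sig)]$ to $\Hk^\mu_{\cM,d}\cong\cH^r\times\frSi'$.
\end{itemize}
The Lefschetz trace formula for Frobenius-twisted correspondences needs no transversality. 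It turns $\deg(\mathrm{id},\Fr)^!\zeta$ into $\sum_a\Tr(f_!\zeta\circ\Fr_a)$. Smallness of $f_d$ is then used to identify $f_!\zeta$, rather than the class of $\cH^r$, with $(f^\flat_{d,!}[\ov\cH^\Diamond])^r$, since $\zeta$ is the fundamental class over $\cA^\Diamond_d$.

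Accordingly, the deeper-level work is not the linear algebra you single out as the main obstacle. It is checking the hypotheses of the Octahedron Lemma:
\begin{itemize}
\item smoothness of the correct dimension of $\Bun_G(\Sig)$, $\Hk'^r_G(\Sig)$ and $H_d(\Sig)$;
\item expected dimensions of $\Sht'^r_G(\Sii)$, $\Sht_T$ and $\cM_d(\mu_\Sig)$;
\item a finite flat presentation of $\Hk^\mu_{\cM,d}$;
\item above all, that $\theta^\mu_{\Hk}$ factors as a regular local immersion followed by a smooth relative DM map.
\end{itemize}
The paper obtains the last point by enlarging $\Sig$ to some $\wt\Sig$ with $\deg\wt\Sig>\rho/2$ and using the smoothness of the forgetful level maps in \thref{lem:BunG-level-smooth}.
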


The proof of Theorem \ref{th:Ir} works very similar to the corresponding argument in \cite{Yun2019} by relating both sides of the formula to terms appearing in a certain ``master diagram'' (that will be introduced below) and apply the octahedron lemma of \cite[Theorem A.10]{Yun2017} together with the Lefschetz trace formula.
We only explain how to set up the master diagram in \eqref{TianX} and verify its geometric properties needed to apply the Octahedron Lemma in Section \ref{sec:octahedronlemma} below. 
For the rest of the proof the argument of \cite[Theorem 5.6]{Yun2019} goes through verbatim so we refer to \emph{loc.~cit.} for details.

\subsubsection{The master diagram}

We have the following straightforward generalization of the stacks $H_d(\Sigma)$ together with its Hecke correspondence of \cite[Definition 5.7]{Yun2019} to deeper level.
\begin{defn}
	Let $\wt H_{d}(\Sig)$ be the moduli stack whose $S$-points consist of triples $(\cE^{\da}, \cE'^{\da}, \ph)$ where
	\begin{itemize}
		\item $\cE^{\da}=(\cE, \cE(-\ha \Sig))$ and $\cE'^{\da}=(\cE', \cE'(-\ha \Sig ))$ are $S$-points of $\Bun_{2}(\Sig)$ such that $\deg(\cE'|_{X\times s})-\deg(\cE|_{X\times s})=d$ for all geometric points $s\in S$.
		\item $\ph: \cE\to \cE'$ is a map of coherent sheaves which is injective when restricted to $X\times s$ for all geometric points $s\in S$ and compatible with the level structure in the sense that $\cE(-\ha \Sig)$ is mapped to $\cE'(-\ha \Sig)$.
		\item The restriction $\ph|_{(\Sig\sqcup R)\times S}$ is an isomorphism.
	\end{itemize}
	Moreover, let $H_{d}(\Sig)=\wt H_{d}(\Sig)/\Pic_{X}$ where $\Pic_{X}$ acts by tensoring on $\cE^{\da}$ and $\cE'^{\da}$ simultaneously. 
\end{defn}

As in the Iwahori case we denote the projections to $\cE^{\da}$ and $\cE'^{\da}$ respectively by
\begin{equation*}
	\olr{p_{H}}=(\oll{p_{H}},\orr{p_{H}}): H_{d}(\Sig)\to\Bun_{G}(\Sig)^{2}.
\end{equation*}
We have an Atkin--Lehner operator
\begin{equation}\label{ALH inf}
	\AL_{H,\infty}: H_{d}(\Sig)\times\frSi\to H_{d}(\Sig)
\end{equation}
defined by applying $\AL_{\infty}$ (as defined in Section \ref{sect:al-involution-sht} above) to both $\cE$ and $\cE'$.

Let us also recall the definition of the Hecke correspondence $\Hk^{\un\mu}_{H,d}(\Sig)$  for $H_d(\Sigma)$, which carries over verbatim from the Iwahori case:
\begin{defn} 
	Let $\un\mu\in\{\pm1\}^{r}$. Let $\wt\Hk^{\un\mu}_{H,d}(\Sig)$ 
	whose  $S$-points classify tuples $(\underline{\cE}^\da, \underline{\cE}'^\da, (\ph_i)_{0 \leq i \leq r})$ where
	\begin{itemize}
		\item the two points $\un{\cE}^{\da} = ((\cE^{\da}_{i})_i, (f_{i})_i, (x_i)_i)$ and $\un{\cE}'^{\da} = (({\cE'}^{\da}_{i})_i, (f'_{i})_i, (x_i)_i)$ of $\Hk^{\un\mu}_{2}(\Sig)$ map to the same point $(x_{1},\dotsc, x_{r}) \in X^{r}(S)$, and
		\item for each $i = 0,1,\dotsc, r$ the map $(\cE_{i}, \cE'_{i}, \ph_i) \in \wt H_{d}(\Sig) $
\end{itemize}such that the diagram
\begin{equation}\label{diag EE'}
\xymatrix{\cE_{0}\ar@{-->}[r]^{f_{1}}\ar[d]^{\ph_{0}} & \cE_{1}\ar@{-->}[r]^{f_{1}}\ar[d]^{\ph_{1}}  & \cdots \ar@{-->}[r]^{f_{r}} &  \cE_{r}\ar[d]^{\ph_{r}}\\
	\cE_{0}'\ar@{-->}[r]^{f'_{1}} & \cE_{1}'\ar@{-->}[r]^{f'_{2}} & \cdots \ar@{-->}[r]^{f'_{r}}& \cE_{r}'
}
\end{equation}
is commutative.
	Let
	$	\Hk^{r}_{H,d}(\Sig):=\wt\Hk^{\un\mu}_{H,d}(\Sig)/\Pic_{X}$
	where $\Pic_{X}$ acts on $\wt\Hk^{\un\mu}_{H,d}(\Sig)$ by tensoring on all $\cE^{\da}_{i}$ and $\cE'^{\da}_{i}$. 
\end{defn}

The notation for $\Hk^{r}_{H,d}(\Sig)$ is justified because one can check, as in the Iwahori case, that $\wt\Hk^{\un\mu}_{H,d}(\Sig)/\Pic_{X}$ is canonically independent of $\un\mu$.
We have projections
\begin{equation*}
	p_{H,i}: \Hk^{r}_{H,d}(\Sig)\to H_{d}(\Sig), \quad \text{for }  i=0,\dotsc, r, \quad \text{ and } \quad \olr{q}=(\oll{q}, \orr{q}): \Hk^{r}_{H,d}(\Sig)\to \Hk^{r}_{G}(\Sig)^{2}
\end{equation*}
recording the $i$-th column of the diagram \eqref{diag EE'} and the upper and lower rows of the diagram \eqref{diag EE'}, respectively.
We also consider the base changes 
\begin{equation*}
	\Hk'^{r}_{H,d}(\Sig) \defined \Hk^{r}_{H,d}(\Sig)\times_{X^{r}}X'^{r} \qquad \text{and} \qquad
	\Hk'^{r}_{G}(\Sig) \defined \Hk^{r}_{G}(\Sig)\times_{X^{r}}X'^{r}.
\end{equation*}

In the following we write put the subscript $\frSi'$ to denote the product (over $\Spec k$) with $\frSi'$.
The \emph{master diagram} is then the following commutative diagram:
\begin{equation}
	\label{TianX}
		\begin{tikzcd}
			(\Hk^{\un\mu}_{T}\times\Hk^{\un\mu}_{T})_{\frSi'} 
			\arrow[d, "{(({p^{\un\mu}_{T,0}})^2 \times {\id}_{\frSi'}, \a_{T})}", swap] 
			\arrow[rr, "\th^{\mu}_{\Hk'}\times \th^{\mu}_{\Hk'} \times \id_{\frSi'}"] 
			&&
			\Hk'^{r}_{G}(\Sig)^{2}_{\frSi'} 
			\arrow[d, "{(p'^{2}_{G,0}, \a'_{G})}", swap] 
			&&
			\Hk'^{r}_{H,d}(\Sig)_{ \frSi'}
			\arrow[d, "{(p'_{H,0}, \a_{H})}"]
			\arrow[ll, "\olr{q}' \times\id_{\frSi'}", swap]   
			\\ 
			((\Bun_{T} \times \Bun_{T})_{\frSi'})^2 
			\arrow[rr,  "(\th^{\mu}_{\Bun} \times \th^{\mu}_{\Bun})^2"] 
			&& 
			\Bun_{G}(\Sig)^{2} \times \Bun_{G}(\Sig)^{2} 
			&& 
			H_{d}(\Sig) \times H_{d}(\Sig) 
			\arrow[ll, "\olr{p_{H}} \times \olr{p_{H}}", swap]  
			\\ 
			(\Bun_{T} \times \Bun_{T})_{\frSi'} 
			\arrow[u, "{({\id}, {\Fr})}"] 
			\arrow[rr, "{\th^{\mu}_{\Bun} \times \th^{\mu}_\Bun}"] 
			&& 
			\Bun_{G}(\Sig)^{2} 
			\arrow[u, "{({\id}, {\Fr})}"] 
			&& 
			H_{d}(\Sig) 
			\arrow[ll, "\olr{p_{H}}", swap] 
			\arrow[u, "{({\id}, {\Fr})}", swap]
		\end{tikzcd}
\end{equation}

The map $\a_{T}$ is the composition
	\begin{equation*}
		\Hk^{\un\mu}_{T}\times\Hk^{\un\mu}_{T}\times \frSi' \xr{p^{\un\mu}_{T,r}\times p^{\un\mu}_{T,r}\times\id_{ \frSi'}}\Bun_{T} \times \Bun_{T} \times \frSi' \xr{\AL_{T,\mu_{\infty}}^{(2)}}\Bun_{T} \times \Bun_{T} \times \frSi'
	\end{equation*}
	with $\AL_{T,\mu_{\infty}}^{(2)}$ defined as
	\begin{equation}\label{ALT mu mu'}
		\AL_{T, \mu_{\infty}}^{(2)} \left(\cL_{1},\cL_{2}, (x'^{(1)})_{x \in \Sig_{\infty}} \right)=\left(\cL_{1} \left(-\sum_{x\in\Si}\mu_{1, x}x_1'^{(1)} \right), \cL_{2}\left(-\sum_{x\in\Si}\mu_{2,x}x_2'^{(1)}\right), (x'^{(2)})_{x \in \Sig_{\infty}}\right).
	\end{equation}
	In other words, on the $\frSi'$-factor the map $\a_{T}$ acts as the Frobenius morphism.
	The map $\a'_{G}$ is the composition
	\begin{equation*}
		\Hk'^{r}_{G}(\Sig)^{2}_{\frSi'} \xr{p'^{2}_{G,r}\times \nu_{\infty}}\Bun_{G}(\Sig)^{2}\times\frSi\xr{\AL_{\infty} \times \AL_{\infty}}\Bun_{G}(\Sig)^{2},
	\end{equation*}
	where both copies of $\AL_\infty$ use the same $\frSi$-factor, and $ \nu_\infty \colon \frSi' \to \frSi$ is the projection.
 Similarly, $\a_{H}$ is the composition $\Hk'^{r}_{H,d}(\Sig)_{\frSi'} \xr{p'_{H,r}\times  \nu_{\infty}}H_{d}(\Sig)\times\frSi\xr{\AL_{H,\infty}}H_{d}(\Sig).$

We now consider the fiber products of the columns and rows of the diagram.
We denote by $\Sht^{r}_{H,d}(\Sii)$ the fiber product of the last column of \eqref{TianX}. In other words, the diagram
\begin{equation}\label{defn ShtH}
	\xymatrix{\Sht^{r}_{H,d}(\Sii)\ar[r]\ar[d] & \Hk^{r}_{H,d}(\Sig)_{\frSi'}\ar[d]^{(p_{H,0}, \a_{H})}\\
		H_{d}(\Sig)\ar[r]^-{(\id,\Fr)} & H_{d}(\Sig)\times H_{d}(\Sig)
	}
\end{equation}
is Cartesian by definition. Then the fiber products of the three columns are given by
\begin{equation}\label{diag:hor-maps}
	\xymatrix{\Sht^{\un\mu}_{T}(\mu_{\infty}\cdot\Sig_{\infty}') \times_{\frSi'}\Sht^{\un\mu}_{T}(\mu_{\infty}\cdot\Sig_{\infty}')\ar[r]^-{(\th^{\mu})^2} 
		& \Sht'^{r}_{G}(\Sii)\times_{\frSi'}\Sht'^{r}_{G}(\Sii)
		& \Sht'^{r}_{H,d}(\Sii)\ar[l]}
\end{equation}

We define $\Sht'^{r}_{\cM,d}$ to be the fiber product of this diagram.
In other words, we have by definition a Cartesian diagram
\begin{equation}\label{diag:defn-ShtM}
	\xymatrix{\Sht^{\mu}_{\cM,d}\ar[d]\ar[r] & \Sht'^{r}_{H,d}(\Sii)\ar[d]\\
		\Sht^{\un\mu}_{T}(\mu_{\infty}\cdot\frS_{\infty}')\times_{\frSi'}\Sht^{\un\mu}_{T}(\mu_{\infty}\cdot\frS_{\infty}')\ar[r]^-{(\th'^{\mu})^2} & \Sht'^{r}_{G}(\Sii)\times_{\frSi'}\Sht'^{r}_{G}(\Sii)}.
\end{equation}
As the Hecke correspondence $\Hk^{r}_{H,d}(\Sig)$ preserves the map $s \colon H_{d}(\Sig)\to U_{d}$ that associates to a point $(\Ec^\dagger, \Ec'^\dagger, \ph)$ the divisor of $\det(\ph)$ considered as a section of $\det(\Ec)^{-1} \otimes \det(\Ec')$ while the Frobenius of $H_d(\Sig)$ lifts the Frobenius on $U_d$, we obtain a canonical decomposition indexed by effective divisors of degree $d$ on $U$ as
\begin{equation}\label{decomp ShtH'}
	\Sht'^{r}_{H,d}(\Sii)=\coprod_{D\in U_{d}(k)}\Sht'^{r}_{G}(\Sii;h_{D}),
\end{equation}
as in \cite[(5.22)]{Yun2019} and \cite[Lemma 6.12]{Yun2017}.
We get an induced decomposition
\begin{equation}\label{decomp ShtM D'}
	\Sht^{\mu}_{\cM,d}=\coprod_{D\in U_{d}(k)}\Sht^{\mu}_{\cM,D}.
\end{equation}

Now we consider the fiber product of the three rows of the master diagram \eqref{TianX}.
We define $\cM(\mu_\Sig)$ to be the fiber product of the bottom row of \eqref{TianX}, i.e. it is defined by the Cartesian diagram
	\begin{equation*}
		\xymatrix{\cM_{d}(\mu_{\Sig})\ar[r]\ar[d] &  H_{d}(\Sig)\ar[d]^{\olr{\wt p_{H}}}\\
			\Bun_{T}\times\Bun_{T}\times\frSi'\ar[r]^{(\th^{\mu}_{\Bun})^2} & \Bun_{G}(\Sig)\times\Bun_{G}(\Sig).}
	\end{equation*}
The Atkin-Lehner operators $\AL$ on each of the constituents of the diagram give rise to an Atkin-Lehner operator $\AL_{\cM, \infty}$ on $\cM_d(\mu_\Sig)$. 
In a similar fashion we define its Hecke correspondence $\Hk^{\mu}_{\cM,d}$ via the Cartesian diagram	
\begin{equation}
	\begin{tikzcd}
		\Hk^{\mu}_{\cM,d} \arrow[r] \arrow[d] & \Hk'^r_{H,d}(\Sig) \arrow[d] \\
		(\Hk^{\un \mu}_T)^2_{\frSi'} \arrow[r] & \Hk'^r_G(\Sig)^2
	\end{tikzcd}
	\label{diag:defn-HkM}
\end{equation}
Then we can identify the fiber product of each of the rows of \eqref{TianX} as 
\begin{equation}
	\label{diag:vert-maps}
	\begin{tikzcd}
		\Hk^{\mu}_{\cM,d} \arrow[d, "{p_{\cM, 0}, \alpha_{\cM}}"] \\
		\cM_d(\mu_\Sig) \times \cM_d(\mu_\Sig) \\
		\cM_d(\mu_\Sig) \arrow[u, "{(\id, \Fr)}", swap]
	\end{tikzcd}
\end{equation}

As in \cite[§5.3]{Yun2019} we can identify $\cM_d \times \frSi' \cong \cM_d(\mu_\Sig)$ compatibly with their Hecke correspondences $\Hc^r \times \frSi' \cong \Hk_{\cM, d}^{\mu}$, compare \cite[Propositions 5.15 and 5.16]{Yun2019}.
Then $\Sht'$ is also the fiber product of this diagram of fiber products of the rows in the master diagram, that means that the diagram 
\begin{equation}
	\label{diag:var-ShtM}
	\begin{tikzcd}
		\Sht^{\mu}_{\cM,d} \arrow[r] \arrow[d] & \Hk^{\mu}_{\cM,d} \arrow[d]\\
		\cM_d(\mu_\Sig) \arrow[r, "{(\id,\Fr)}"] & \cM_d(\mu_\Sig) \times \cM_d(\mu_\Sig)
	\end{tikzcd}
\end{equation}
is Cartesian.

%
%
%
%
%
	
\subsubsection{Some geometric facts}
\label{sec:octahedronlemma}
Suppose $D$ is an effective divisor on $U$ of degree $d\ge \max\{2g'-1+N, 2g\}$. 
We collect some geometric facts about the stacks involved in the constructions in the master diagram \eqref{TianX}.
In particular, we verify the assumptions of the Octahedron lemma \cite[Theorem A.10]{Yun2017}, compare \emph{loc.~cit.} for the precise formulation.
In the Iwahori case the corresponding statements are verified in \cite[Proposition 5.18]{Yun2019}. 
We adapt their arguments to the deeper level setting or give references to the relevant statements.
	
	For \textbf{Condition (1)} of \emph{loc.~cit.} we need to verify that all stacks appearing in \eqref{TianX} except for $\Hk'^r_{H,d}(\Sig)_{\frSi'}$ are smooth of the correct dimension.
	For $\Bun_T$ as well as $\Hk_T$ this is well-known, compare for example \cite[Proposition 5.17]{Yun2019}. 
	For $\Bun_G(\Sig)$ and $\Hk_G'(\Sig)$ follows from \thref{prop:geometry-Hecke}.
	The assertion for $H_d(\Sig)$ follows as in \cite[Proposition 5.17 (5)]{Yun2019} by comparison with the situation without level in \cite[Lemma 6.14(1)]{Yun2017}.
	
	For \textbf{Condition (2)} we need to check that some of the fiber products of rows and columns have the expected dimensions. 
	More precisely, we need the assertions for $\Sht'^r_G(\Sii)$, in which case compare \thref{p:bc smooth}, $\Sht'^r_T$, compare \cite[Corollary 4.3]{Yun2019}, as well as for $\cM_d(\mu_\Sig)$ in which case the statement follows from the comparison with $\cM_d$ together with \thref{prop:geometry-Md}.

	For \textbf{Condition (3)} we first note that the diagram \eqref{defn ShtH} satisfies the conditions in \cite[\S A.2.10]{Yun2017} by an argument as in \cite[Lemma 6.14(1)]{Yun2017}, compare also \cite[Proposition 5.18]{Yun2019}.
	
	To check that the diagram \eqref{diag:defn-HkM} satisfies the conditions in \cite[\S A.2.8]{Yun2017} we first note that $\Hk^{\mu}_{\cM,d}$ is a DM stack that admits a finite flat presentation following the argument in the proof of \cite[Proposition 5.18]{Yun2019} and \cite[Proposition 5.5(5)]{Yun2019}, \cite[Proposition 6.1 (1)]{Yun2017}. Namely, the argument in \emph{loc.~cit.} that $\cM_d$ is a Deligne-Mumford stack that admits a finite flat presentation carries over to our setting and the claim follows from the fact that the projection $p_{\cM, 0} \colon \Hk^{\mu}_{\cM, d} \to \cM_d(\mu_\Sig) \to \cM_d$ is schematic. 
	
	For the remaining claim it suffices to show that $\th_{\Hk}^{\mu} \colon  \Hk_T^{\un \mu} \times \frSi' \to \Hk'^r_G(\Sig)$ factors as a regular local immersion and a smooth relative Delgine-Mumford map. 
	We may enlarge $\Sig$ to $\wt\Sig \subset X-R$ such that $\deg\wt\Sig>\r/2$, e.g. by adding a point $y \in |X - \Sig - R|$ with big enough multiplicity (if $\Sig \neq \emptyset$ one could also increase the multiplicity of one of the points in $\Sig$). By enlarging the base field $k$, we may assume that all points in $\nu^{-1}(\wt\Sig)$ are defined over $k$. Choose a section of $\nu^{-1}(\wt\Sig)\to \wt\Sig$ extending the existing section $\mu_{f}$, and call this section $\wt\mu_f$.
	Then map $\th^{\mu}_{\Hk}$
	factors as 
	\[
	\Hk_T^{\un \mu} \times \frSi' \xrightarrow{\th^{\wt \mu}_{\Hk}} \Hk'^r_G(\Sig) \times_{\Bun_G(\Sig)} \Bun_G(\wt \Sig) \to \Hk'^r_G(\Sig)
	\]
	into a regular local immersion and a smooth relative DM map, where the first map is defined as the composition of $\th^{\wt \mu}_{\Hk} \colon \Hk_T^{\un \mu} \times \frSi' \to \Hk'^r_G(\wt \Sig)$ with the forgetful map $\Hk'^r_G(\wt \Sig) \to \Hk'^r_G(\Sig)$.
	The latter asseriontion follows from \thref{lem:BunG-level-smooth}.
	To check that the first map is a regular local immersion we compute the induced map on tangent complexes.
	The proof in \cite[Proposition 5.18]{Yun2019} (respectively \cite[Lemma 6.11(1)]{Yun2017}) goes through after noting that the relative (over $X'^r$) tangent complex of the (smooth DM-)stack $\Hk^{\un \mu}_T$ at a point $b =(\Lc, (x_i)_{i \in I})$ is given by $H^1(X, \Oc_{X'}/\Oc_X)$, and the relative tangent complex of $\Hk'^{r}_G(\Sig;\wt \Sig)$ at the image of $b$ is given by $\cohog{*}{X, \Ad^{\un{x'}, \wt\Sig}(\nu_{*}\cL)}[1]$ and we can embed
		\begin{equation*}
			\Ad^{\un{x'}, \wt\Sig}(\nu_{*}\cL)\subset (\nu_{*}(\cO_{X'}(R'))/\cO_{X})\oplus_{R'}\nu_{*}(\s^{*}\cL^{-1}\ot\cL(R'-\wt\Sig'')).
		\end{equation*} 
%

	For \textbf{Condition (4)} of \cite[Theorem A.10]{Yun2017} we show that the two diagrams \eqref{diag:defn-ShtM} and \eqref{diag:var-ShtM} defining $\Sht'^{\mu}_{\cM,d}$ both satisfy the conditions in \cite[\S A.2.8]{Yun2017}. As in the Iwahori case treated in \cite[Proposition 5.18]{Yun2019}, the argument is analogous to the hyperspecial case in the proof of \cite[Theorem 6.6]{Yun2017}.

	This finishes the verification of the assumptions of the Octahedron lemma in our setting.
	To apply the Octahedron lemma we also need to compute the dimension of $\Hk'^r_{H,d}(\Sig)$, but an argument following the lines of \cite[Proposition 5.17]{Yun2019} as well as \cite[Lemma 6.10 (2)]{Yun2017} shows that it has dimension $2d+2r +3(g-1) + N$.

\section{Spectral decomposition of cohomology and the key identity}
\label{sect:cohomology}

We generalize finiteness results for the cohomology of $\Sht^r_G(\Sii)$ to deeper level.
We closely follow the discussion of \cite[§3.4, §3.5]{Yun2019} in the Iwahori case and explain how to modify their arguments to our more general setting.

\subsection{Horocycles}
\label{sec:truncation}
All the moduli stacks defined in the previous subsection are usually only locally of finite type over $\Fq$. In order to control their cohomology we cut out certain open substacks of finite type.
The key new arguments in this section are the generalizations in \thref{lem:unstable-divisor} of \cite[Lemma 3.19]{Yun2019}  and 
\thref{l:kappa mod} of {\cite[Lemma 3.21]{Yun2019}}. 
Building on these two lemmas, the remaining parts of \cite[§3.4, §3.5]{Yun2019} carry over in a straightforward manner. 

Using the fractional twists as defined above we translate the notion of index of instability of bundles with level structure of \cite{Yun2017, Yun2019} to our setting.
Let us recall the definition.
For an algebraically closed extension $K/k$ and $\Ec \in \Bun_2(K)$ the index of instability of $\Ec$ is $$\inst(\Ec) = \max\{2\deg(\Lc)- \deg(\Ec)\}$$
where the maximum is taken over all line subbundles $\Lc$ of $\Ec$.

Recall that above we constructed fractional twists $\Ec(\frac{D}{2})$ for any divisor $D$ on $X_K$ supported on $\Sig(K)$, where we interpret the fractional twists $\frac{D}{2}$ as in Section \ref{sss:frac-twist}.
In particular, $\inst(\Ec(\frac{D}{2}))$ only depends on the class of $D$ in $\oplus_{x \in \Sig(K)} (\Z/2n_x \Z) [x]$. 

Moreover, for $\Ec^\dagger \in \BunGLnaive(K)$ for an algebraically closed field $K/k$ set 
$$ \inst(\Ec^\dagger) = \min\left\{ \inst\left(\Ec\left(-\frac{D}{2}\right)\right)\colon  D \in \bigoplus_{x \in |\Sig_{\red}(K)|} {\Z}/2n_x\Z [x]\right\}.$$
Then $\Ec$ (or $\Ec^\dagger$) is called (purely) unstable if $\inst(\Ec) > 0$ (respectively $\inst(\Ec^\dagger)>0$).
The notions only depend on the image of $\Ec$ (or $\Ec^\dagger$) in $\Bun_G$ (respectively $\BunGnaive$).

For an unstable $\Fc \in \Bun_2$ there is a unique line subbundle $\Lc$ with $\deg(\Lc)> \frac{\deg(\Ec)}{2}$ called the maximal line subbundle of $\Fc$, let $\cM = \Fc/\Lc$ be its quotient. 
For an effective divisor $D'$ we denote by $\Fc \lrcorner_{D'}$ (respectively $\ulcorner_{D'} \Fc$) the pushout (respectively pullback) of the exact sequence
$$ 0 \to \Lc \to \Fc \to \cM \to 0$$
along $\Lc \to \Lc(D')$ (respectively along $\cM(-D') \to \cM$).

We have the following generalization of \cite[Lemma 3.19]{Yun2019}.
\begin{lemma}
	\thlabel{lem:unstable-divisor}
	Let $K$ be an algebraically closed field containing $k$ and let $\cE^\da \in \BunGnaive(K)$ be purely unstable. 
	\begin{enumerate}
		\item There is a unique $D \in \bigoplus_x (n_x{\Z})/(2n_x\Z) [x]$ with $\inst(\Ec^\dagger) = \inst\left( \Ec \left( \frac{D}{2} \right) \right),$ and the coefficient of $[x]$ in $D$ is given either by $0$ or $n_x$.
		\label{lem:unstable-divisor-unique}
		\item The point $\Ec^\dagger \in \BunGnaive(K)$ is uniquely determined by $\Ec \left( \frac{D}{2} \right) $ for $D$ as in \eqref{lem:unstable-divisor-unique} in the sense that
		$ \Ec(\frac{D+D'}{2}) =\Ec(\frac{D}{2})_{(\frac{D'}{2})}$ for every $D' \in \bigoplus_x \Z/(2n_x)\Z$.
		\label{lem:unstable-divisor-reconstruct}
		\item For any $D' = \sum_x c'_x[x]$ the degree of instability of the fractional twist is given by
		$$ \inst\left(\Ec\left( \frac{D'}{2} \right) \right) = \inst\left(\Ec\left( \frac{D}{2} \right) \right) + |D' - D|$$
		where $|D' - D| = \sum_x \min( |c'_x - c_x|, |2n_x + c'_x - c_x|)$ and $D = \sum_x c_x[x]$ with $0 \leq c_x < 2n_x$.
		\label{lem:unstable-divisor-formula}
	\end{enumerate}
\end{lemma}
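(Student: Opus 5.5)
The proof is local at each point of $\Sig$, so I will first reduce to one point. The fractional twists at different points of $|\Sig|$ are defined by independent kernel operations at those points. Changing the twist at one point $x$ changes $\deg$ of the twisted bundle and of any line subbundle only through what happens at $x$. Hence, once the local contribution at each point is understood, the instability of $\Ec(\frac{D'}{2})$ is an additive function of the local data. So fix $x$ with multiplicity $n = n_x$ and study the cyclic chain
\[
\Ec = \Ec_0 \supset \Ec\left(-\tfrac{x}{2}\right) \supset \Ec\left(-\tfrac{2x}{2}\right) \supset \dots \supset \Ec\left(-\tfrac{2n x}{2}\right) = \Ec(-n x),
\]
which has length $2n$. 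Each step has colength $\deg(x)$. Over an algebraically closed $K$ I will treat $x$ as a single point, or argue on each geometric point separately.

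The key local computation runs as follows. Let $\Fc$ be a term of the chain, with maximal destabilizing line subbundle $\Lc$, so that $\inst(\Fc) = 2\deg\Lc - \deg\Fc$. Let $\Fc' \subset \Fc$ be the next term, of colength one at $x$. Then $\Lc \cap \Fc'$ is either $\Lc$ or $\Lc(-x)$. Correspondingly $\inst$ changes by exactly $+1$ or $-1$, as long as the bundle stays unstable, and the maximal subbundle is inherited.

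Now use the structure of the level: the quotient $\Ec/\Ec(-\frac{\Sig}{2}) = \Lc_\Sig$ is a line bundle on $n[x]$. Consider the line $\Lc|_{n[x]} \subset \Ec|_{n[x]}$. I will look at how the level quotient restricts to this line: either $\Lc|_{n[x]} \to \Lc_\Sig$ is an isomorphism, or it vanishes at $x$. In the first case, the first $n$ steps of the chain all cut $\Lc$, and the next $n$ steps, which go from $\Ec(-\frac{nx}{2})$ to $\Ec(-nx)$, all preserve $\Lc(-nx)$.

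The claim to verify is that in all cases the instability along the cyclic chain is a "tent" function: it decreases by one per step for $n$ steps and then increases for $n$ steps. Its unique minimum then lies at position $0$ or at position $n$. This gives (1), with the coefficient equal to $0$ or $n_x$, and it gives the formula in (3) with the cyclic distance $|D'-D|$.

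I expect this dichotomy to be the main obstacle. The mixed case, where $\Lc|_{n[x]}$ meets the kernel of the level map to some intermediate order $0<m<n$, must be excluded, or shown to still give a tent centered at $0$ or $n$. I would first try to rule it out by exploiting that $\Ec^\dagger$ is purely unstable. Together with minimality, this should force the maximal subbundle of the minimizing twist to be either saturated in the level subbundle or transversal to it, as in \cite[Lemma 3.19]{Yun2019}.

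Uniqueness of $D$ in (1) then follows from the tent shape, because the minimum is strict. For (2), I will observe that, starting from $\Ec(\frac{D}{2})$, the whole cyclic chain is recovered by the push-out and pull-back operations along its maximal line subbundle. The steps in which $\inst$ increases are exactly the steps $\Fc \mapsto \ulcorner_{x}\Fc$ or $\Fc\lrcorner_{x}$, because unstable twists always move the maximal line by a full point. Hence $\Ec(\frac{D+D'}{2}) = \Ec(\frac{D}{2})_{(\frac{D'}{2})}$, and in particular $\Ec^\dagger$ is determined by $\Ec(\frac{D}{2})$.
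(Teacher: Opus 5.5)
\textbf{There is a genuine gap, and it cannot be closed.} The mixed case you flag really does occur for purely unstable $\Ec^\dagger$. In that case uniqueness in (1), the reconstruction in (2) and the formula in (3) all fail. So the statement is false as written whenever some $n_x\ge 2$. Pure instability only forces the local minima to be positive; it puts no constraint on the vanishing order $m$.

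\textbf{Counterexample.} Let $\Sig=2x$ and $\Ec=\Lc\oplus\mathcal{M}$ with $I=\deg\Lc-\deg\mathcal{M}\ge 1$. Let $e_1,e_2$ be local generators of $\Lc,\mathcal{M}$ at $x$, and take the level quotient $\Ec|_{2x}\to\Oc_{2x}$, $ae_1+be_2\mapsto\varpi a+b$. Then $\Lc|_{2x}$ maps onto $\varpi\Oc_{2x}$. Locally at $x$,
\[
\Ec\bigl(-\tfrac{x}{2}\bigr)=\Oc e_1\oplus\varpi\Oc e_2,\qquad \Ec\bigl(-\tfrac{2x}{2}\bigr)=\Oc(e_1-\varpi e_2)+\varpi^2\Oc e_2,\qquad \Ec\bigl(-\tfrac{3x}{2}\bigr)=\varpi\Oc e_1\oplus\varpi^2\Oc e_2 .
\]
So the saturations of $\Lc$ in $\Ec(-\frac{cx}{2})$ for $c=0,\dots,4$ are $\Lc,\Lc,\Lc(-x),\Lc(-x),\Lc(-2x)$. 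Each of these has positive value, so it is the maximal subbundle, and the instabilities are $I,I+1,I,I+1,I$. Consequently:
\begin{itemize}
\item $\inst(\Ec^\dagger)=I>0$ is attained at both $D=0$ and $D=2x$, contradicting uniqueness in (1).
\item Formula (3) would predict $I+2$ at $D'=2x$, but the actual value is $I$.
\item The same bundle $\Ec$ also carries the level structure $\Ec\to\mathcal{M}|_{2x}$, with profile $I,I+1,I+2,I+1$ and minimum again at $D=0$. So $\Ec^\dagger$ is not determined by $\Ec(\frac D2)$, and (2) fails.
\end{itemize}

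\textbf{General profile.} Suppose the maximal subbundle of $\Ec$ maps onto $\varpi^m\Lc_\Sig$ at $x$. Then the instability profile is
\[
c\mapsto I+t-2\max(0,t-m),\qquad t=\min(c,2n_x-c).
\]
This is a single tent only when $m\in\{0,n_x\}$. For $0<m<n_x$ it has local minima at $c=0$ and $c=n_x$, with values $I$ and $I+2m-n_x$. Passing to a minimizing twist does not help: relative to position $n_x$ the vanishing order becomes $n_x-m$, which is again strictly between $0$ and $n_x$.

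\textbf{Comparison with the paper.} The paper's own proof has the same defect. It does, however, contain the idea your proposal lacks for the part of (1) that is true.
\begin{itemize}
\item At a minimizer $\Fc$, both neighbours have instability $\inst(\Fc)+1$. Hence they equal $\ulcorner_x\Fc$ and $\Fc\lrcorner_x$, and $\Fc(\frac{x}{2})/\Fc(-\frac{x}{2})\cong K^2$.
\item At a position $c\neq 0,n_x$, this quotient is a subquotient of $\Ec/\Ec(-\frac{n_x x}{2})$ or of $\Ec(-\frac{n_x x}{2})/\Ec(-n_x x)$. Both are free of rank one over $\Oc_{n_x x}$, so the quotient is free of rank one over $K[\varpi_x]/(\varpi_x^2)$, which is incompatible with $K^2$.
\item Hence every minimizer has coefficients in $\{0,n_x\}$, with no tent shape needed.
\end{itemize}
The paper then asserts that the argument also gives $\Fc(-\frac{cx}{2})=\ulcorner_{cx}\Fc$ for $1\le c\le n_x$. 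This does not follow. Freeness at interior positions rules out local minima there, but not local maxima. Position $1$ in the example is such a local maximum, and there $\Ec/\Ec(-\frac{2x}{2})\cong\Oc_{2x}$ is free.

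A correct statement must either restrict to $m_x\in\{0,n_x\}$ or record $m_x$ in the invariant $\kappa$. The identification ${}^{\kappa}\Bun_G(\Sig)\cong{}^{N}\Bun_G\otimes\bar k$ and the horocycle analysis built on this lemma would then need to be revisited.
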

\begin{proof}
	We follow the proof of \cite[Lemma 3.19]{Yun2019}, we only sketch the case that $|\Sig_{\red}(K)| = \{x\}$ contains a single point, the general case follows as in \emph{loc.~cit.}.
	Let us pick some $D = c[x]$ such that $\inst(\cE^\da) = \inst(\cE(\frac{D}{2}))$ and set $\cF = \cE(\frac{D}{2})$.
	Then $\inst(\cF(\pm \frac{x}{2})) = \inst(\cF) + 1$ using the minimality of $\cF$. Hence, $\cF(-\frac{x}{2}) = \ulcorner_x\cF$ and $\cF(\frac{x}{2}) = \Fc \lrcorner_x$. 
	If $c \neq 0, n$, this contradicts the assumption that their quotient is free of rank 1 as $K[\varpi_x]/(\varpi_x^2)$-module from our definition of level structure.
	The argument also shows that $\Fc(\frac{c x}{2}) = \Fc \lrcorner_{cx}$ and $\Fc(-\frac{c x}{2}) = \ulcorner_{cx} \Fc$ and hence $\inst(\Fc(\pm \frac{cx}{2})) = \inst(\Fc) + c$ for all $1 \leq c \leq n$ .
\end{proof}

For a purely unstable $\cE^{\da}\in \Bun_{G}(\Sig)(K)$, we may thus define 
$$
	\k(\cE^{\da})=(D, \inst(\cE^{\da}))\in \bigoplus_{x \in |\Sig_{\red}(K)|} {\Z}/2n_x\Z [x]\times\ZZ_{>0},
$$
where $D\in\bigoplus_x {\Z}/2n_x\Z [x]$ is the unique element such that $\inst(\cE^{\da})=\inst(\cE(\ha D))$. 
 
For a positive integer $N>0$ let  ${}^{N}\Bun_{G}$ be the locally closed substack of $\Bun_{G}$ whose geometric points are exactly those $\cE$ with $\inst(\cE)=N$. 
Similarly, for $\k \in \bigoplus_{x \in |\Sig_{\red}(\kbar)|} {\Z}/2n_x\Z [x]\times\ZZ_{>0}$ there is a locally closed substack ${}^{\k}\Bun_{G}(\Sig)\subset \Bun_{G}(\Sig)\ot\kbar$ whose geometric points are exactly those $\cE^{\da}$ with $\k(\cE^{\da})=\k$ (under the identification $\Sig(\kbar)\isom\Sig(K)$).

As in the Iwahori-case, we consider the partial order on $\bigoplus_x {\Z}/2n_x\Z [x]\times\ZZ$ defined by 
\begin{equation*}
	\k=(D,N)\le \k'=(D',N') \quad \text{ if and only if } \quad  N'-N\ge |D-D'|.
\end{equation*}
For $\k=(D,N)\in\bigoplus_x {\Z}/2n_x\Z [x]\times\ZZ_{>0}$, let ${}^{\le\k}\Bun_{G}(\Sig)\subset \Bun_{G}(\Sig)\ot\kbar$ be the open substack consisting of $\cE^{\da}$ such that for any $D'\in\bigoplus_x {\Z}/2n_x\Z [x]$, $\inst(\cE(\ha D'))\le N+|D'-D|$. We see that ${}^{\k}\Bun_{G}(\Sig)\subset{}^{\le\k'}\Bun_{G}(\Sig)$ if and only if $\k\le \k'$. Moreover,  ${}^{\k}\Bun_{G}(\Sig)$ is closed in ${}^{\le\k}\Bun_{G}(\Sig)$, and we denote its open complement by ${}^{<\k}\Bun_{G}(\Sig)$.

For two elements $\k=(D,N),\k'=(D',N')\in \bigoplus_x {\Z}/2n_x\Z [x]\times\ZZ_{>0}$ we define
\begin{equation*}
	|\k-\k'|:=|D-D'|+|N-N'|\in\ZZ_{\ge0}.
\end{equation*}

\begin{cor}[{cf. \cite[Corollary 3.20]{Yun2019} for $\Sig$ reduced}] \label{c:kBunSig} 
For $\k=(D, N)\in \bigoplus_x n_x{\Z}/2n_x\Z [x]\times\ZZ_{>0}$, the map $\cE^{\da}\mapsto \cE(\ha D)$ gives an isomorphism of $\kbar$-stacks
	\begin{equation*}
		{}^{\k}\Bun_{G}(\Sig)\isom {}^{N}\Bun_{G}\ot\kbar.
	\end{equation*}
\end{cor}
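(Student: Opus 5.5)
We will construct an explicit inverse to the map $\cE^{\da}\mapsto \cE(\ha D)$ and check that both maps are well defined on the relevant locally closed substacks. First, for $\cE^{\da}\in{}^{\k}\Bun_G(\Sig)(K)$ with $K$ algebraically closed, \thref{lem:unstable-divisor} \eqref{lem:unstable-divisor-unique} says that the minimizing divisor is exactly $D$, whose coefficients are $0$ or $n_x$. Hence $\cF\defined\cE(\ha D)$ satisfies $\inst(\cF)=\inst(\cE^\da)=N$. So the map lands in ${}^{N}\Bun_G\ot\kbar$ on geometric points. Working with families over a base $S$, the twist $\cE(\ha D)$ is defined functorially (Section \ref{sss:frac-twist}), and the $\Pic_X$-action commutes with it. The map is therefore a morphism of stacks ${}^{\k}\Bun_G(\Sig)\to \Bun_G\ot\kbar$, and it factors through ${}^N\Bun_G$ because this condition is tested on geometric points and ${}^N\Bun_G$ is locally closed; here we take reduced substack structures on both sides.

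For the inverse, start from $\cF$ in ${}^N\Bun_G$ with $N>0$. It has a canonical maximal line subbundle $\Lc\subset\cF$ with quotient $\cM$, and this persists in families over ${}^N\Bun_G$ by the usual Harder--Narasimhan argument. Guided by the proof of \thref{lem:unstable-divisor}, we set $\cE(\ha D)\defined\cF$ and recover the remaining twists as pushouts and pullbacks. Concretely, for each $x$ in the support we put
\[
\cE(\ha (D+c[x])) \defined \cF\lrcorner_{c x}, \qquad \cE(\ha (D-c[x])) \defined \ulcorner_{c x}\cF, \qquad 0\le c\le n_x,
\]
doing this independently at each point. This produces an $\cE$ together with a sub-bundle $\cE(-\ha\Sig)$, namely the twists at the shift $D\mp n_x$, chosen according to whether $c_x=0$ or $n_x$. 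We then check that $\cE/\cE(-\ha\Sig)$ is locally free of rank one over $\Oc_{\Sig\times S}$. This holds because $\cF\lrcorner_{n_x x}/\cF$ is the restriction $\Lc(n_xx)|_{n_xx}$, which is a line bundle on $n_x[x]\times S$. The constructed object therefore lies in $\Bun_G(\Sig)$.

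It remains to check that the two constructions are mutually inverse. \thref{lem:unstable-divisor} \eqref{lem:unstable-divisor-reconstruct} gives exactly that $\cE^\da$ is recovered from $\cE(\ha D)$ in this way. Conversely, \thref{lem:unstable-divisor} \eqref{lem:unstable-divisor-formula} shows that the constructed $\cE^\da$ has $\k(\cE^\da)=(D,N)$. Indeed, $\inst(\cF\lrcorner_{cx})=N+c$ and $\inst(\ulcorner_{cx}\cF)=N+c$, since $\Lc$ remains the maximal subbundle. This identifies $D$ as the unique minimizer.

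The main obstacle is upgrading the pointwise identifications to an isomorphism of stacks. This requires showing that over the stratum ${}^{\k}\Bun_G(\Sig)$, the maximal destabilizing subbundle of $\cE(\ha D)$ exists in families and that the identities $\cE(\ha(D+D'))=\cE(\ha D)_{(\ha D')}$ hold over arbitrary (reduced) bases rather than only over fields. We would handle this as in \cite[Corollary 3.20]{Yun2019}: restrict to reduced test schemes, where the strata carry their reduced structure, and use that the inclusions of kernels are determined fibrewise once the relevant quotients are flat.
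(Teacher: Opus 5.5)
Your proposal is correct and follows the argument the paper intends. The paper states the corollary without proof, as an immediate consequence of \thref{lem:unstable-divisor} (as in \cite[Corollary 3.20]{Yun2019}), with the level structure recovered from $\cE(\ha D)$ by the pushouts and pullbacks of the Harder--Narasimhan sequence exactly as you do. Two small remarks. First, part (3) of that lemma already assumes $D$ is the minimizer, so citing it does not show $\k(\cE^\da)=(D,N)$; what shows it is your direct computation that the constructed object has $\inst = N + |D'-D|$. Second, your sketch of the passage to families over reduced bases supplies details the paper leaves implicit.
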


We recall how $\kappa$ can change under elementary modifications.
\begin{lem}[{\cite[Lemma 3.21]{Yun2019}}]
	\thlabel{l:kappa mod}
	 Suppose $(\cE^{\da},\cF^{\da},y, \ph)\in \Hk^{1}_{G}(\Sig)(K)$ (where $K$ is an algebraically closed field, $\cE^{\da},\cF^{\da}$ are lifted to $\Bun_{2}(\Sig)(K)$, $\ph: \cE\incl \cF$ and $y$ is the support of $\coker(\ph)$), and  $\cE^{\da}$ and $\cF^{\da}$ are both purely unstable. Write $\k(\cE^{\da})=(D,N)$, $\k(\cF^{\da})=(D',N')$. 
		\begin{enumerate}
			\item $|\k(\cE^{\da})-\k(\cF^{\da})|=1$.
			\item If  $N=N'$, then $D$ and $D'$ differ at a unique point $x\in \Sig_{\Iw}(K)$, and we have $y=x$. The points $\cE^{\da}$ and $\cF^{\da}$ are uniquely determined by the triple $(\cE(\ha D), \cF(\ha D'), \a)$ where $\a$ is an isomorphism of $G$-bundles
			\begin{equation*}
				\a: \cE(\ha D)\lrcorner_{x}\cong \cF(\ha D')\lrcorner_{x}. 
			\end{equation*}
			
			\item If $N=N'-1$, then $D=D'$, and $\cE^{\da}$ and $\cF^{\da}$ are determined by the single bundle $\cE(\ha D)$ in the following way: $\cE^{\da}$ is determined by $\cE(\ha D)$ as in Lemma \ref{lem:unstable-divisor}(2); $\cF(\ha D)=\cE(\ha D)\lrcorner_{y}$ and $\cF^{\da}$ is determined by $\cF(\ha D)$ again by Lemma \ref{lem:unstable-divisor}(2).
			\item If $N=N'+1$, then $D=D'$, and $\cE^{\da}$ and $\cF^{\da}$ are determined by the single bundle $\cF(\ha D)$ in the following way: $\cF^{\da}$ is determined by $\cF(\ha D)$ as in Lemma \ref{lem:unstable-divisor}(2); $\cE(\ha D)=\ulcorner_{y}(\cF(\ha D))$ and $\cE^{\da}$ is determined by $\cE(\ha D)$ again by Lemma \ref{lem:unstable-divisor}(2). 
		\end{enumerate}
\end{lem}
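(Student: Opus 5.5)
The plan is to reduce everything to \thref{lem:unstable-divisor} and to the unramified statement about modifications of unstable rank $2$ bundles, arguing locally at each point of $\Sig$. First I record the effect of a single modification. Lift to $\Bun_2(\Sig)(K)$, so that $\ph\colon \cE\incl\cF$ has cokernel of length $1$ at $y$ and respects the level structures. The point is to see how $\ph$ interacts with all fractional twists. If $y\notin\Sig$, then $\ph$ induces $\cE(\ha D'')\incl\cF(\ha D'')$ with colength one at $y$ for every $D''$. If $y=x\in\Sig$, compatibility with the level structure (\thref{rem:BL}) means that, locally at $x$, the lattice chain of $\cE$ is carried to that of $\cF$. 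By \thref{prop:schubert-deep-smooth}, when $n_x\ge 2$ exactly one of two cases occurs. In the first, $\cF(-\frac{n_x x}{2})$ is a colength-one modification of $\cE(-\frac{n_x x}{2})$ and all twists by multiples of $\frac{n_x x}{2}$ shift compatibly. In the second, the chain is shifted by one step, exactly as in the Iwahori case. In all cases I then use the elementary fact that $|\inst(\cE')-\inst(\cF')|=1$ whenever $\cE'\incl\cF'$ is a colength-one modification.

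Next I prove (1). Write $\k(\cE^\da)=(D,N)$. By \thref{lem:unstable-divisor}\eqref{lem:unstable-divisor-formula}, $\inst(\cE(\ha D''))=N+|D''-D|$, and similarly for $\cF$. Comparing the two functions $D''\mapsto\inst$ via the previous step, they differ by exactly $1$ at every $D''$ in the case $y\notin\Sig$, and also in the deeper-level case $y=x$ with $n_x\ge2$. Since both functions are of the form ``minimum plus distance'', with minima attained only at $D$ (resp.\ $D'$) by \thref{lem:unstable-divisor}\eqref{lem:unstable-divisor-unique}, a function uniformly shifted by $\pm1$ has the same minimizer. Hence $D=D'$ and $|N-N'|=1$. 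At Iwahori points $y=x\in\Sig_\Iw$ the argument of \cite[Lemma 3.21]{Yun2019} applies verbatim: either $D=D'$ and $N'=N\pm1$, or $D,D'$ differ by one step at $x$ and $N=N'$. In every case $|\k(\cE^\da)-\k(\cF^\da)|=1$.

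Parts (2)--(4) then follow from (1). For (2), $N=N'$ forces the Iwahori alternative, so $D$ and $D'$ differ exactly at $y=x\in\Sig_\Iw$. The reconstruction from $(\cE(\ha D),\cF(\ha D'),\a)$ is the computation in \emph{loc.~cit.}: both $\cE(\ha D)\lrcorner_x$ and $\cF(\ha D')\lrcorner_x$ equal the twist $\cE(\ha(D+x))$. For (3) and (4), $D=D'$. The bundle $\cF(\ha D)$ is a colength-one modification of $\cE(\ha D)$ at $y$ with $\inst$ raised by $1$; this forces it to be the pushout $\cE(\ha D)\lrcorner_y$, since the maximal line subbundle must gain the degree. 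Symmetrically, when $N=N'+1$ we get $\cE(\ha D)=\ulcorner_y\cF(\ha D)$. Finally, \thref{lem:unstable-divisor}\eqref{lem:unstable-divisor-reconstruct} recovers $\cE^\da$ and $\cF^\da$ from these bundles.

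The main obstacle is the deeper-level case $y=x$ with $n_x\ge2$. There I must check carefully, via the two-component description in \thref{prop:schubert-deep-smooth}, that the modification cannot move the minimizing $D\in\{0,n_x\}$ to a different value. A priori the component $\Uc_2$ might interchange the roles of $0$ and $n_x$. I would rule this out by an explicit lattice computation: the shift $n_x$ has length $\ge2$, whereas a single modification changes $\inst$ by only $1$, so a change of minimizer would contradict \thref{lem:unstable-divisor}\eqref{lem:unstable-divisor-formula}.
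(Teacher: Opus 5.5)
Your decisive observation is exactly the paper's new input. The minimizer has $x$-coefficient $0$ or $n_x$, so moving it at a point with $n_x\ge2$ costs distance $n_x\ge 2$, which one elementary modification cannot pay for. The paper uses this to force $n_x=1$ when $N=N'$ and defers everything else to Yun--Zhang, as you do at Iwahori points.

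However, the argument you actually give for (1) has a gap. Since $\cE(\ha D'')\incl\cF(\ha D'')$ has colength one for every $D''$, you get $|\inst(\cE(\ha D''))-\inst(\cF(\ha D''))|=1$ pointwise. The sign may depend on $D''$, so ``uniformly shifted by $\pm1$'' does not follow. The sign really does flip in case (2): the difference is $-1$ at $D''=D$ and $+1$ at $D''=D'$. As written, nothing in your treatment of $y\notin\Sig$ or of deep $y=x$ excludes that $D$ and $D'$ differ at an Iwahori point $x'\neq y$; your last paragraph only controls the deep coordinates.

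Your description of $\Uc_2$ is also inaccurate. For $n_x\ge2$ the twist by $\frac{x}{2}$ does not preserve $\Gamma_0(\pf_x^{n_x})$-structures, so there is no ``one-step shift of the chain''. On $\Uc_2$ one has $\cE=\cF(-\frac{x}{2})$, yet $\cE(-\frac{n_x x}{2})\subset\cF(-\frac{n_x x}{2})$ still has colength one. Colength one at every twist is just a length count valid on both components, so \thref{prop:schubert-deep-smooth} is not needed.

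The fix is to evaluate at the two minimizers rather than appeal to uniformity. Set $m=|D-D'|$. By \thref{lem:unstable-divisor}\eqref{lem:unstable-divisor-formula} you get $|N-N'-m|=1$ at $D''=D$ and $|N-N'+m|=1$ at $D''=D'$, so $m\le1$.
\begin{itemize}
\item If $m=0$, then $|N-N'|=1$.
\item If $m=1$, then $N=N'$. Since the coefficients lie in $\{0,n_x\}$, $D$ and $D'$ differ at a single point with $n_x=1$.
\end{itemize}
This proves (1) and the $\Sig_\Iw$-assertion of (2) for every $y$ at once, and it is precisely the paper's argument.

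What remains is $y=x$ in (2). This is global and comes from loc.~cit. Normalize $D'=D+x$, and let $\Lc$ be the maximal line subbundle of $\cE(\ha D)$. The maximal line subbundle of $\cF(\ha D)$ is then $\Lc(y)$, because $\inst$ rises along the modification at $y$. It is also $\Lc(x)$, via $\cE(\ha D')=\cE(\ha D)\lrcorner_x$, $\cE(\ha D')=\ulcorner_y\cF(\ha D')$ and $\cF(\ha D)=\ulcorner_x\cF(\ha D')$. This forces $y=x$. With (1) in hand, your arguments for (3) and (4) go through.
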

\begin{proof}
	We sketch the argument following \cite[Lemma 3.21]{Yun2019}. Namely we first note that $N - N' \in \{0, \pm 1\}$ by construction.
	The cases $N = N' \pm 1$ are treated as in \emph{loc.~cit.}. 
	Moreover, if $N = N'$, then $D$ and $D'$ differ at a unique point $x = y$ we have since the coefficients of $x$ in both $D$ and $D'$ is either $0$ or $n_x$ together with $|\kappa - \kappa'| = |D - D'| = 1$ that $n_x = 1$. 
	In other words, $x \in \Sig_\Iw(K)$ is a reduced point of $\Sig$.
	Now we can continue as in \emph{loc.~cit.}  
\end{proof}
As a consequence we get a description of the Hecke stack as in \cite[Corollary 3.22]{Yun2019}.

Let us now generalize the definition of horocycles.
We abbreviate notation for the rest of this section and write $\Sht \defined \Sht^r_G(\Sii) \times_k \Spec(\kbar)$ for an algebraic closure $\kbar$ of $k$.

\begin{defn}[{\cite[Definition 3.23]{Yun2019}}]\label{def: horo}
	Let $\un\k=(\k_{0},\k_{1},\dotsc, \k_{r})$ be a sequence of elements in 
	$$ \bigoplus_{x \in |\Sig_{\red}(\kbar)|} {\Z}/2n_x\Z [x]\times \ZZ_{>0}.$$ 
	\begin{enumerate}
		\item The {\em horocycle of type $\un\k$} of $\Sht$ is the locally closed substack ${}^{\un\k}\Sht\subset \Sht$ whose geometric points are exactly those $(\cE^{\da}_{i};\dotsc)\in \Sht$ such that each $\cE^{\da}_{i}$ is purely unstable with $\k(\cE_{i}^{\da})=\k_{i}$, for $i=0,1,\dotsc, r$.
		\item The {\em truncation up to $\un\k$} of $\Sht$ is the open substack of $\Sht$ consisting of $(\cE^{\da}_{i};\dotsc)$ such that $\cE^{\da}_{i}\in {}^{\le \k_{i}}\Bun_{G}(\Sig)$ for all $0\le i\le r$. 
	\end{enumerate}
\end{defn}
Then the horocycle ${}^{\un\k}\Sht$ is closed in ${}^{\le\un\k}\Sht$ and we denote its open complement by ${}^{<\un\k}\Sht$. 
We have the following necessary criterium for horocycles to be non-empty.

\begin{lemma}[{\cite[Lemma 3.24]{Yun2019}}]
	\thlabel{l:nonempty horo} 
	Let $\un\k=(\k_{0},\k_{1},\dotsc, \k_{r})$ with $\k_{i}=(D_{i}, N_{i})$ be a sequence of elements in 
	$$ \bigoplus_{x \in |\Sig_{\red}(\kbar)|} {\Z}/2n_x\Z [x]\times \ZZ_{>0}.$$ 
	If ${}^{\un\k}\Sht$ is non-empty, then
	\begin{enumerate}
		\item $D_i \in \bigoplus_{x \in |\Sig_{\red}(\kbar)|} (n_x{\Z}/2n_x\Z) [x]\times \ZZ_{>0}$ and $N_{0}=N_{r}$,
		\item for each $i=1,\dotsc, r$, $|\k_{i-1}-\k_{i}|=1$,
		\item and $\Fr(D_{0})$ (applying the arithmetic Frobenius to each point appearing $D_{0}$) and $D_{r}$ differ at exactly one $\kbar$-point above each place of $\Si$ and nowhere else.  
	\end{enumerate}
\end{lemma}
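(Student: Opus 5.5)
The plan is to read off each condition from the two lemmas already established for purely unstable bundles with naive level structure, exactly as in the Iwahori case. Fix a geometric point $(\cE^\da_i; f_i; \theta; x_i; \{x^{(1)}\}) \in {}^{\un\k}\Sht$. By definition each $\cE_i^\da$ is purely unstable with $\k(\cE_i^\da) = \k_i = (D_i, N_i)$.

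\textbf{Step 1: the conditions on $D_i$ and on consecutive pairs.} The first half of (1) is immediate from \thref{lem:unstable-divisor}\eqref{lem:unstable-divisor-unique}: the distinguished divisor $D_i$ realizing $\inst(\cE_i^\da)$ has coefficient $0$ or $n_x$ at each $x$. Hence $D_i \in \bigoplus_x (n_x\Z/2n_x\Z)[x]$.

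For (2), I apply \thref{l:kappa mod} to each consecutive pair $(\cE_{i-1}^\da, \cE_i^\da)$. When $\mu_i = \mu_+$ the modification $f_i$ is an injection with cokernel invertible on $\Gamma_{x_i}$, so the pair is a point of $\Hk^1_G(\Sig)(K)$. When $\mu_i = \mu_-$ I apply the lemma to the pair in the reversed order. Since $|\cdot|$ is symmetric, in either case $|\k_{i-1} - \k_i| = 1$.

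\textbf{Step 2: comparing the ends via $\theta$.} The remaining claims, $N_0 = N_r$ and (3), come from the isomorphism $\theta \colon \cE_r \cong ({}^\tau\cE_0)(\Di)$, which respects level structures. Pulling back by Frobenius preserves instability and transports the distinguished divisor, so ${}^\tau\cE_0^\da$ has invariant $(\Fr(D_0), N_0)$. The main point is then to compute the $\k$-invariant of the twist $({}^\tau\cE_0^\da)(\Di)$.

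Since $\Sig$ is reduced at every $x \in \Si$, the twist by $\Di$ involves only reduced points. With the choice $\Di \equiv \Di^{(1)}$, it changes the level structure by a half twist at exactly one geometric point $\bx^{(1)}$ over each $x \in \Si$, up to integral twists; the integral twists do not change $\k$ in $G$.

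Now apply \thref{lem:unstable-divisor}\eqref{lem:unstable-divisor-formula} to the twist. The instability of $\cF(\frac{D'}{2})$ equals the minimum plus $|D' - D|$. Here the minimum must be attained again, because the twist is again purely unstable with index $N_r$. A half twist at a reduced point $x$ (where $n_x = 1$) either toggles the coefficient of $D$ at that point while keeping $N$, or raises $N$ by one. Equating the index $N_r$ of $\cE_r^\da$ with that of the twist forces the toggling alternative at each such point. This gives $N_r = N_0$ and shows that $D_r$ and $\Fr(D_0)$ differ exactly at the points $\bx^{(1)}$, one over each $x \in \Si$, and nowhere else.

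\textbf{Main obstacle.} The delicate step is the bookkeeping in Step 2. One has to check that the fractional twist $(\cdot)(\Di)$ at supersingular places interacts with $\k$ exactly as in \cite[Lemma 3.24]{Yun2019}, and that the non-reduced places in $\Sf$ play no role there. The latter holds because $\Di$ is supported on $\Si$ and the construction of the twists is local at each point.

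To exclude the alternative "raise $N$ at some points", I would first try the degree parity coming from \eqref{eq:cond-non-empty} together with the parity $r \equiv \#\Si$. If that is insufficient, I would compare the maximal line subbundles directly at the points $\bx^{(i)}$, as done in \emph{loc.~cit.}
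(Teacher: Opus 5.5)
Your Step 1 is fine and is what the paper does: the paper's proof just says the statement follows, as in Yun--Zhang, from \thref{lem:unstable-divisor} and \thref{l:kappa mod}. In particular, $D_i\in\bigoplus_x(n_x\Z/2n_x\Z)[x]$ is part (1) of the former, and $|\k_{i-1}-\k_i|=1$ is part (1) of the latter, applied to the reversed pair when $\mu_i=\mu_-$.

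The gap is in Step 2, i.e.\ in $N_0=N_r$ and condition (3).
\begin{itemize}
\item You posit a dichotomy: a half twist ``either toggles the coefficient keeping $N$, or raises $N$ by one''.
\item You assert that equating indices forces the first branch, then concede the second branch still has to be excluded.
\item For that exclusion you propose a parity argument or an unspecified comparison of maximal line subbundles.
\end{itemize}
As written this proves nothing, and the parity route cannot work. If the second branch occurred at $m$ of the points $\bx^{(1)}$, you would get $N_r-N_0=m$. The natural parity constraint is $N\equiv\deg\cE+\sum_y c_y \pmod 2$ together with $\deg\cE_r-\deg\cE_0=\sum_i\mu_i\equiv\#\Si$. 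It yields $N_r-N_0\equiv m$ as well, so it is compatible with every $m$. The spurious dichotomy comes from applying part (3) of \thref{lem:unstable-divisor} to the single underlying bundle ${}^\tau\cE_0(\Di)$. That computes the instability of one fractional twist, not the invariant $\k$ of the level-structure object ${}^\tau\cE_0^\da(\Di)$.

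The missing observation is that the Atkin--Lehner twist only reindexes the family over which $\inst$ is minimized.
\begin{itemize}
\item Since $\cE(D_1+D_2)=\cE(D_1)(D_2)$, the fractional twists of $\cE^\da(\Di)$ are the bundles $\cE(\Di+\frac{D'}{2})$, $D'\in\bigoplus_y\Z/2n_y\Z[y]$.
\item This is the same family as for $\cE^\da$ itself, translated by $\sum_{x\in\Si}\bx^{(1)}$. The integral part of $\Di$ is a line bundle twist and changes no instability.
\item Hence $\inst(\cE^\da(\Di))=\inst(\cE^\da)$ by the very definition of $\inst(\cE^\da)$ as a minimum.
\item By the uniqueness in part (1) of \thref{lem:unstable-divisor}, the minimizing divisor is shifted by $\sum_{x\in\Si}\bx^{(1)}$ (the sign is irrelevant, see below).
\end{itemize}
Apply this to $\cE^\da={}^\tau\cE_0^\da$, whose invariant is $(\Fr(D_0),N_0)$, and use $\cE_r^\da\cong{}^\tau\cE_0^\da(\Di)$. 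You get $N_r=N_0$, and $D_r$ and $\Fr(D_0)$ differ by $\sum_{x\in\Si}\bx^{(1)}$. Since $n_x=1$ for $x\in\Si$, this flips the coefficient at exactly the one point $\bx^{(1)}$ over each $x\in\Si$. It changes nothing else, in particular nothing at $\Sf$. No case distinction and no use of part (3) is needed.
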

\begin{proof}
	As in \emph{loc.~cit.} this follows from \thref{lem:unstable-divisor} and \thref{l:kappa mod}.
\end{proof}

\begin{defn}\label{defn:Kr} Let $\frK_{r}$ be the set of tuples $\un\k=(\k_{0},\k_{1},\dotsc, \k_{r})$ satisfying the conditions in Lemma \ref{l:nonempty horo}, where each $\k_{i}\in\bigoplus_{x} ({\Z}/2n_x\Z) [x]\times \ZZ$.
	The partial order on $ \bigoplus_{x} {\Z}/2n_x\Z [x]\times\ZZ_{>0}$ extends to one on $\frK_{r}$ by stipulating that $(\k_{0},\dotsc, \k_{r})\le (\k'_{0},\dotsc, \k'_{r})$ if and only if $\k_{i}\le\k'_{i}$ for all $0\le i\le r$.
\end{defn}

As in the Iwahori case we have for two tuples  $\un\k,\un\k'\in \frK_{r}$ that ${}^{\un\k}\Sht\subset {}^{\le\un\k'}\Sht$ if and only if $\un\k\le \un\k'$.
Then it follows from the previous lemma that 
\[
	\Sht=\bigcup_{\un\k\in \frK_{r}}{}^{\le\un\k}\Sht.
\] 
For $\un\k\in\frK_{r}$ and $N\in \ZZ$, we write $\un\k>N$ if $N_{i}(\un\k)>N$  for all $0\le i\le r$. 
Moreover, we set $\frK^{\sh}_{r} = \{\un\k\in \frK_{r}|\un\k>\max\{2g-2,0\}\}$ and
\begin{equation*}
	{}^{\sh}\Sht = \cup_{\un\k\in\frK^{\sh}_{r}}{}^{\un\k}\Sht.
\end{equation*}
Then ${}^{\sh}\Sht$ consists of $(\cE^{\da}_{i};\dotsc)$  where all $\inst(\cE^{\da}_{i})> \max\{2g-2,0\}$, therefore it is a closed substack of $\Sht$.  Let ${}^{\fl}\Sht=\Sht-{}^{\sh}\Sht$ be its open complement. 
The following results can be shown as in \emph{loc.~cit.}.
\begin{lemma}[{\cite[Lemma 3.30]{Yun2019}}]\label{l:ft}
	The substack ${}^{\fl}\Sht$ is of finite type over $\kbar$. 
\end{lemma}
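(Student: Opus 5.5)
The plan is to reduce the finiteness of ${}^{\fl}\Sht$ to the finiteness of truncated pieces of $\Bun_G(\Sig)$, exactly as in the Iwahori case. First observe that ${}^{\fl}\Sht$ consists of those $(\cE^{\da}_i;\dotsc)$ for which at least one $\cE^{\da}_i$ satisfies $\inst(\cE^{\da}_i)\le \max\{2g-2,0\}$. Since $\Sht\to\Hk^r_G(\Sig)\times\frSi$ is obtained by base change from the graph of Frobenius on $\BunGnaive$, and each projection $p_i\colon \HeckeGGa\to\BunGnaive$ is representable and of finite type (\thref{prop:geometry-Hecke}), it suffices to show that the image of $\cE^{\da}_0$ ranges over a finite type open substack of $\BunGnaive$. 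In other words, it suffices to bound $\inst(\cE^{\da}_0)$ uniformly on ${}^{\fl}\Sht$.

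For the bound I would use \thref{l:kappa mod}. Along each elementary modification $\cE^{\da}_{i-1}\dashrightarrow\cE^{\da}_i$ the invariant $\inst$ changes by at most $1$. This holds for purely unstable bundles by \thref{l:kappa mod}(1). In general it follows from the elementary fact that an elementary modification changes the degree of instability of the underlying bundle, and of every fractional twist $\cE(\ha D')$, by at most one, because the fractional twists commute with modifications away from $\Sig$ and differ by bounded modifications at $\Sig$. Moreover, the Frobenius identification $\cE_r\cong{}^\tau\cE_0(\Di)$ shows that $\inst(\cE^{\da}_r)$ and $\inst(\cE^{\da}_0)$ differ by a bounded amount, since $\Di$ is a half twist at $\Si$ and $\inst$ is Frobenius invariant. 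Hence if some $\cE^{\da}_i$ has $\inst\le\max\{2g-2,0\}$, then $\inst(\cE^{\da}_0)\le \max\{2g-2,0\}+r+c$ for a constant $c$ depending only on $\Si$. So $\cE^{\da}_0$ lies in the union of the opens ${}^{\le\k}\Bun_G(\Sig)$ over finitely many $\k=(D,N)$ with $N\le\max\{2g-2,0\}+r+c$, together with the semistable-type locus where $\inst\le 0$.

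It remains to see that $\{\cE^{\da}\in\BunGnaive : \inst(\cE^{\da})\le M\}$ is of finite type for each $M$. By the definition of $\inst(\cE^{\da})$ as a minimum over fractional twists, this condition means that $\inst(\cE(-\ha D))\le M$ for some $D$ among the finitely many classes in $\bigoplus_x\Z/2n_x\Z$. The locus of $\Bun_G$ where $\inst\le M+2N$ is of finite type (a standard fact for $\Bun_{\PGL_2}$). Because $\cE$ and $\cE(-\ha D)$ differ by a modification of bounded length supported on $\Sig$, $\inst(\cE)$ is bounded as well. The forgetful map $\BunGnaive\to\Bun_G$ is of finite type by \thref{lem:BunG-level-smooth}, so the preimage is of finite type.

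The main obstacle is the step controlling how $\inst$ of the level bundle varies under a modification and under Frobenius when the bundles are not purely unstable. For that step I would simply work with the underlying bundles $\cE_i$ instead of $\cE^{\da}_i$, using the inequality $|\inst(\cE^{\da})-\inst(\cE)|\le N$. This crude bound suffices for finiteness.
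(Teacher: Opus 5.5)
Your proposal is correct and follows essentially the same route as the argument the paper imports from \cite[Lemma 3.30]{Yun2019}. A point of ${}^{\fl}\Sht$ has some $\cE^{\da}_i$ of instability at most $\max\{2g-2,0\}$, and each modification changes the instability by at most one. Hence $\cE^{\da}_0$ lies in a finite type truncation of $\BunGnaive$, and $\Sht\to\BunGnaive$ is of finite type. The Frobenius comparison is unnecessary, since passing back from $\cE^{\da}_i$ to $\cE^{\da}_0$ uses at most $r$ modifications. Your fallback to the underlying bundles, with $|\inst(\cE^{\da})-\inst(\cE)|\le N$, is the cleanest way to avoid the loosely justified claim about fractional twists under modifications at points of $\Sig$.
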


For $\un\k=(\k_{0},\dotsc, \k_{r})\in \frK_{r}$ with $\un\k_{i}=(D_{i}, N_{i})$, we define an associated index set $I(\un\k)\subset \{1,2,\dotsc, r\}$ by 
\begin{equation*}
	I(\un\k)=\{1\le i\le r| N_{i-1}\ne N_{i}\}.
\end{equation*}
By definition for $i\in\{1,2,\dotsc, r\} \setminus I(\un\k)$ there is a unique reduced point $x_{i}(\un\k) \in \Sig_\Iw(\kbar)$ such that $D_{i-1}$ and $D_{i}$ differ at $x_{i}(\un\k)$. 
Moreover, we denote the unique $\kbar$-point over each $x\in\Si$ where $D_{r}$ and $\Fr(D_{0})$ differ (following the last condition in \thref{l:nonempty horo}) by  $x^{(1)}(\un\k)$.
As for every $i\in I(\un\k)$ we have $N_{i}=N_{i-1}\pm1$ and $N_{r}=N_{0}$, we see that $\#I(\un\k)$ is even.
We define $X(\un\k)\subset (X^{r}\times\frSi)\ot\kbar$ to be the corresponding subspace
\begin{equation*}
	X(\un\k) \defined \{(x_{1},\dotsc, x_{r}, (x^{(1)})_{x\in\Si}) \colon  x_{i}=x_{i}(\un\k) \text{ for all } i \notin I(\un\k); \, x^{(1)}=x^{(1)}(\un\k)\text{ for all } x \in \Si\}.
\end{equation*}
The projection to the $I(\un\k)$-coordinates gives an isomorphism
\begin{equation*}
	X(\un\k)\isom X^{I(\un\k)}\ot\kbar.
\end{equation*}

%

\begin{lem}[{\cite[Corollary 3.27]{Yun2019}}] 
	For $\un\k\in\frK_{r}$ and $\un\k>0$, the restriction of the map $\Pi^{r}_{G}: \Sht\to (X^{r}\times \frSi) \otimes_k \kbar$ to ${}^{\un\k}\Sht$ has image in $X(\un\k)$.
\end{lem}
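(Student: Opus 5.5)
The plan is to check the defining conditions of $X(\un\k)$ one by one at each geometric point of ${}^{\un\k}\Sht$, using the explicit description of modifications between purely unstable bundles in \thref{l:kappa mod} and the non-emptiness conditions of \thref{l:nonempty horo}. Since ${}^{\un\k}\Sht$ is reduced (we take the reduced locally closed structure) and $X(\un\k)$ is closed in $(X^r\times\frSi)\otimes_k\kbar$, it suffices to verify the claim on $K$-points for algebraically closed fields $K\supseteq\kbar$.

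Fix such a point $(\cE_i^\da; f_i; x_i; (x^{(1)})_{x\in\Si}; \theta)$ with $\k(\cE_i^\da)=\k_i=(D_i,N_i)$, all $N_i>0$. \emph{Step 1 (finite legs).} Let $i\notin I(\un\k)$, so $N_{i-1}=N_i$. The $i$-th step $\cE_{i-1}^\da\dashrightarrow\cE_i^\da$ is a point of $\Hk^1_G(\Sig)(K)$ of type $\mu_i$; if $\mu_i=\mu_-$ we swap roles, since the inverse modification is of type $\mu_+$. Both bundles are purely unstable, so \thref{l:kappa mod}(2) applies. It gives that $D_{i-1}$ and $D_i$ differ at a unique point $x\in\Sig_\Iw(K)$, and that the leg $y=x_i$ equals $x$. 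By definition this point is $x_i(\un\k)$, so $x_i=x_i(\un\k)$ for all $i\notin I(\un\k)$.

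\emph{Step 2 (infinite legs).} The Frobenius identification $\theta\colon\cE_r\cong({}^\tau\cE_0)(D_\infty)$ is compatible with level structures. Applying $\k$ to both sides, we must compute how $\k$ transforms under ${}^\tau(-)$ and under the twist by $D_\infty\equiv D^{(1)}_\infty=\sum_{x\in\Si}\frac{\bx^{(1)}}{2}$ modulo integral divisors. For ${}^\tau(-)$, $\k({}^\tau\cE_0^\da)=(\Fr(D_0),N_0)$, because the instability index is Frobenius-invariant and fractional twists commute with $\tau$. For the half twist, twisting at $\bx^{(1)}$ is a fractional twist at a reduced point $x\in\Si$ (here $n_x=1$), so the $D$-component changes exactly at the $K$-point $x^{(1)}$ above $x$. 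The $N$-component stays unchanged by \thref{l:nonempty horo}(1), and the change of $D$ is governed by \thref{lem:unstable-divisor}(3), which forces the new minimizing divisor to differ from $\Fr(D_0)$ exactly at $x^{(1)}$. Comparing with $D_r$ then shows that the unique point above $x$ where $D_r$ and $\Fr(D_0)$ differ is the point $x^{(1)}$ of our shtuka. This is $x^{(1)}(\un\k)$, as required.

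Combining the two steps, $\Pi^r_G$ of the point satisfies all defining equations of $X(\un\k)$. The main obstacle is Step 2: one has to be careful that the integral part of $D_\infty$ (and the choice of $\un\mu$) only shifts $N$ and does not move $D$, and that the half twists at distinct Frobenius conjugates $\bx^{(j)}$ do not interfere. I would handle this as in \cite[Corollary 3.27]{Yun2019}. Since $\Si$ consists of reduced points of $\Sig$, the deeper-level modifications of \thref{lem:unstable-divisor} play no role there, and the Iwahori-level argument should apply verbatim.
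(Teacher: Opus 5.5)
Your proof is correct and is essentially the argument the paper relies on via \cite[Corollary 3.27]{Yun2019}: \thref{l:kappa mod}(2) gives $x_i=x_i(\un\k)$ for $i\notin I(\un\k)$, and comparing $\k$ on both sides of $\cE_r^\da\cong({}^\tau\cE_0^\da)(D_\infty)$ using \thref{lem:unstable-divisor}(3) gives $x^{(1)}=x^{(1)}(\un\k)$. One small correction to your closing remark: the integral part of $D_\infty$ acts by tensoring with a line bundle, which changes neither $N$ nor $D$. So $N_r=N_0$ falls out of that same computation, and you do not need to import it from $\un\k\in\frK_r$.
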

 We denote the resulting map by $\pi_{\un\k}: {}^{\un\k}\Sht\to X(\un\k).$
Using \thref{l:kappa mod} we can construct a map
\begin{equation*}
	q_{\un\k}: {}^{\un\k}\Sht\to \Sht^{N(\un\k)}_{1}\ot\kbar
\end{equation*}
such that $\pi_{\un \k}$ factors through $q_{\un \k}$.
As we do not need the concrete description of this map we refer to \cite[§3.4.6]{Yun2019} for the details.
The proofs of its following properties carry over verbatim from \emph{loc.~cit.}.
\begin{lemma}[{\cite[Lemma 3.28]{Yun2019}}]\label{l:horo fib} Suppose $\un\k\in \frK_{r}$ and $\un\k>\max\{2g-2,0\}$. Then the map $q_{\un\k}$ is smooth of relative dimension $r-\#I(\un\k)/2$. The geometric fibers of $q_{\un\k}$ are isomorphic to $[\Ga^{r-\#I(\un\k)/2}/Z]$ for some finite \'etale group scheme $Z$ acting on $\Ga^{r-\#I(\un\k)/2}$ via a homomorphism $Z\to \Ga^{r-\#I(\un\k)/2}$.
\end{lemma}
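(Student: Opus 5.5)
The plan is to reduce the statement, via \thref{l:kappa mod} and \thref{lem:unstable-divisor}, to the description of horocycles for the unramified stack $\Sht^{N}_{1}$ treated in \cite[Lemma 3.28]{Yun2019} (itself built on \cite[§7]{Yun2017}). First I recall how $q_{\un\k}$ is defined. For a point $(\cE^{\da}_i;\dotsc)\in{}^{\un\k}\Sht$ with $\k_i=(D_i,N_i)$, Corollary \ref{c:kBunSig} identifies $\cE^\da_i$ with the purely unstable bundle $\cF_i\defined\cE_i(\ha D_i)$ with $\inst(\cF_i)=N_i$. Let $\cL_i\subset\cF_i$ be its maximal line subbundle. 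By \thref{l:kappa mod}, for $i\in I(\un\k)$ the modification $\cE_{i-1}\dashrightarrow\cE_i$ becomes an elementary modification $\cF_{i-1}\dashrightarrow\cF_i$ at $x_i$, and it either changes $\cL$ or changes the quotient $\cF/\cL$. For $i\notin I(\un\k)$ we have $x_i=x_i(\un\k)\in\Sig_\Iw$, and the step is encoded by an isomorphism $\cF_{i-1}\lrcorner_{x}\cong\cF_i\lrcorner_{x}$. Here only Iwahori points occur, because of \thref{l:kappa mod}(2). The Frobenius identification together with the Atkin--Lehner twist $\AL_\infty$ identifies $\cF_r$ with ${}^\tau\cF_0$ up to the fractional twist at $\Si$. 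Recording the line bundles $\cL_i$ (or the quotients) at the indices in $I(\un\k)$ gives a rank one shtuka with $\#I(\un\k)$ legs, which is the map $q_{\un\k}$ to $\Sht^{N(\un\k)}_1$.

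Second, I would describe the fibers of $q_{\un\k}$. Fix the rank one data. What remains is the extension classes $e_i\in\mathrm{Ext}^1(\cF_i/\cL_i,\cL_i)=H^1(X,\cL_i\otimes(\cF_i/\cL_i)^{-1})$, together with the gluing data at the Iwahori steps. The hypothesis $N_i>2g-2$ gives $H^0(X,\cL_i^{-1}\otimes\cF_i/\cL_i)=0$ and $\deg(\cL_i^{2}\otimes\det\cF_i^{-1})=N_i>2g-2$. Hence these $H^1$ vanish, every extension splits, and the automorphism groups are the unipotent groups $H^0(\cL_i\otimes(\cF_i/\cL_i)^{-1})$, which are vector spaces of dimension $N_i-g+1$. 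The compatibilities along the chain are as follows. Each modification step imposes a linear relation between consecutive $H^0$'s. The Frobenius condition $\cF_r\cong{}^\tau\cF_0$ produces a Lang-type map $v\mapsto v-\Fr(v)$ on the resulting space $V$, up to the linear relations. As in \cite[§7]{Yun2017}, the fiber becomes $[V'/V]$ with $V\to V'$ of the form ``linear minus Frobenius''. Such a map is étale with finite kernel $Z$, and this yields $[\Ga^{m}/Z]$. The dimension $m$ is computed by counting: each step not in $I(\un\k)$ and each pair of steps in $I(\un\k)$ contributes one to $\dim V'-\dim V$, which gives $m=r-\#I(\un\k)/2$.

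Third, smoothness follows because this description holds in families over $\Sht^{N(\un\k)}_1$: the vector bundles $H^0$ have constant rank once the degrees exceed $2g-2$, so $q_{\un\k}$ is a quotient of a vector bundle torsor by a finite étale group scheme, and it is smooth of the stated relative dimension.

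I expect the main obstacle to be the first step at points with $n_x\ge2$. I need that the level structure imposes no extra moduli in the fibers, i.e.\ that $\cE^\da_i$ is recovered from $\cF_i$ uniformly in families. \thref{lem:unstable-divisor}(2) gives this on geometric points: the coefficients of $D_i$ are $0$ or $n_x$, and the fractional twists are pushouts and pullbacks along the destabilizing sequence. I would upgrade this to families by checking that the condition $\cE(-\tfrac{c x}{2})=\ulcorner_{cx}\cF$ is closed and open on ${}^{\un\k}\Sht$. After that, the deeper points behave exactly like unramified points, and the argument of \cite[Lemma 3.28]{Yun2019} applies verbatim.
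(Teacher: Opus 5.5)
Your proposal follows the paper's route. The paper only asserts that the proof of \cite[Lemma 3.28]{Yun2019} carries over verbatim once the deeper-level inputs you invoke are in place: the identification ${}^{\kappa}\Bun_G(\Sigma)\cong{}^{N}\Bun_G\otimes\kbar$, minimizing divisors with coefficients $0$ or $n_x$, and steps with $N_{i-1}=N_i$ occurring only at Iwahori points. Your description of the fiber as $[V'/V]$ for a ``linear minus Frobenius'' homomorphism $V\to V'$ is exactly that argument.

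One small correction: in general $\dim V<\dim V'$, so this homomorphism is unramified rather than \'etale, and its kernel is finite \'etale. The form $[\mathbb{G}_a^{m}/Z]$ then comes from choosing a linear complement $U$ to the image of its differential. The map $U\times V\to V'$ is \'etale and surjective with finite kernel $Z$, which acts on $U$ through a homomorphism $Z\to U$.
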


\subsection{Finiteness results for (perverse) cohomology sheaves under the Hecke action}

We discuss finiteness properties of the Hecke action on perverse cohomology sheaves of $\Sht$ following \cite[§3.5]{Yun2019}.
As all their arguments go through in our setting as well (using the generalized results on horocycles as in the previous subsection), we do not give proofs here but rather only refer to  \emph{loc.~cit.}

\begin{cor}[{\cite[Corollary 3.29]{Yun2019}}]\label{c:Pk perv} Suppose  $\un\k\in \frK_{r}$ and $\un\k>\max\{2g-2,0\}$. Let $\pi_{1}^{N(\un\k)}: \Sht^{N(\un\k)}_{1}\ot\kbar \to X(\un\k)$ be the projection. Then we have a canonical isomorphism
	\begin{equation*}
		\bR \pi_{\un\k,!}\Ql\cong \bR\pi_{1,!}^{N(\un\k)}\Ql[-2r+\#I(\un\k)](-r+\#I(\un\k)/2).
	\end{equation*}
	In particular, $\bR \pi_{\un\k,!}\Ql$ is a local system shifted in degree $2r-\#I(\un\k)$, and
	\begin{equation}\label{defn Pk}
		P_{\un\k}:=\bR \pi_{\un\k,!}\Ql[2r](r)\in D^{b}(X(\un\k),\Ql)
	\end{equation}
	is a perverse sheaf on $X(\un\k)$ with full support and pure of weight 0.
\end{cor}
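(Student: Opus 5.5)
We plan to deduce the statement from Lemma \ref{l:horo fib} via proper base change along the factorization $\pi_{\un\k} = \pi_1^{N(\un\k)} \circ q_{\un\k}$. The argument is the one of \cite[Corollary 3.29]{Yun2019}. The deeper level enters only through the horocycle description, which has already been adapted in \thref{lem:unstable-divisor} and \thref{l:kappa mod}.

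The first step is to compute $\bR q_{\un\k,!}\Ql$. Set $m = r - \#I(\un\k)/2$. By Lemma \ref{l:horo fib}, $q_{\un\k}$ is smooth of relative dimension $m$, and its geometric fibers are $[\Ga^{m}/Z]$ with $Z$ finite étale acting through translations $Z \to \Ga^m$. The compactly supported cohomology of such a fiber, with $\Ql$-coefficients, is the $Z$-invariants of $H^*_c(\Ga^m)$. Since $\Ga^m$ is connected and $Z$ acts by translations, $Z$ acts trivially on this cohomology. The result is therefore $\Ql[-2m](-m)$, concentrated in top degree.

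To globalize this fiberwise computation, we use that $q_{\un\k}$ is smooth, so the relative trace map $\bR q_{\un\k,!}\Ql[2m](m) \to \Ql$ is defined. It is an isomorphism on geometric stalks by the fiber calculation and proper base change, hence an isomorphism. This gives $\bR q_{\un\k,!}\Ql \cong \Ql[-2m](-m)$ canonically, and applying $\bR\pi^{N(\un\k)}_{1,!}$ yields the displayed formula. The main point needing care is the canonicity: we rely on the trace map for the smooth morphism of DM-stacks $q_{\un\k}$ rather than on any choice of trivialization.

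For the remaining assertions, recall that $\Sht_1^{N(\un\k)}\otimes\kbar \to X(\un\k) \cong X^{I(\un\k)}\otimes\kbar$ is a torsor under a discrete group, by the rank one analogue of \thref{lem:shtT-proper}. Hence it is finite étale on each connected component, and proper with finitely many components over the base after quotienting by $\Pic_X(k)$. So $\bR\pi^{N(\un\k)}_{1,!}\Ql$ is a local system in degree $0$, lisse and pure of weight $0$, on the smooth base $X(\un\k)$ of dimension $\#I(\un\k)$. Consequently $\bR\pi_{\un\k,!}\Ql$ is a local system placed in degree $2r - \#I(\un\k)$. Shifting by $[2r](r)$ places it in degree $-\#I(\un\k) = -\dim X(\un\k)$ with weight $0$, so $P_{\un\k}$ is perverse with full support and pure of weight $0$.
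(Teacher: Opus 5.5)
Your proposal is correct and follows the same route as the argument the paper defers to, \cite[Corollary 3.29]{Yun2019}: you compute $\bR q_{\un\k,!}\Ql\cong\Ql[-2r+\#I(\un\k)](-r+\#I(\un\k)/2)$ from the fiber description in Lemma \ref{l:horo fib}, push forward along $\pi_{\un\k}=\pi^{N(\un\k)}_{1}\circ q_{\un\k}$, and use that $\pi^{N(\un\k)}_{1}$ is finite \'etale over the smooth base $X(\un\k)\cong X^{I(\un\k)}\otimes\kbar$. The one justification to tighten is why $\pi^{N(\un\k)}_{1}$ is finite \'etale, which is what makes $\bR\pi^{N(\un\k)}_{1,!}\Ql$ of finite rank: the degrees of the rank one shtukas are fixed by $N(\un\k)$, so it is a torsor under a finite group, and no further quotient by $\Pic_X(k)$ is needed.
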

We consider the cohomology of the moduli stack of shtukas 
\begin{equation*}
	V=\cohoc{2r}{\Sht,\Ql}(r) = \varinjlim_{\un\k\in\frK_{r},\un\k>0}\cohoc{2r}{{}^{\le \un\k}\Sht,\Ql}(r).
\end{equation*}
We consider the perverse cohomology sheaves, and similarly we also consider the perverse cohomology sheaves 
\[
	{}^p\coho{i}{K} \defined \varinjlim_{\un\k\in\frK_{r},\un\k>0} {}^p\coho{i}{\bR \pi_{\leq \un \kappa, !} \Ql[2r](r)} \in \mathrm{indPerv}((X^r \times \frSi)_{\kbar}, \Ql)
\]
The perverse chomology sheaves carry a Hecke action induced from the Hecke correspondences analogous to the construction in Section \ref{sss:Hk Chow}.
Have a cohomological constant term operator 
\[
	\g \colon {}^p\coho{0}{K} \to {}^p\coho{0}{K_\sh} \to \bigoplus_{\un \k \in \frK_{r}^{\sh}} P_{\un \k}
\]
defined as \cite[§3.5.3]{Yun2019}. In particular, by \cite[Lemma 3.34]{Yun2019} the first map is an isomorphism modulo constructibles (in the sense of \cite[Definition 3.32]{Yun2019}) while the second map is an isomorphism.
As in \cite[§3.5.4]{Yun2019} we obtain an mc-action of $\Pic_X(k)$ on $\bigoplus_{\un \k \in \frK_{r}^\sh} P_{\un \k}$ such that the cohomological constant term intertwines the Hecke action with the $\Pic_{X}(k)$-action under the map $\wt a^{\Sig}_{\mathrm{Eis}}$ as in \cite[Lemma 3.36]{Yun2019}.

We define $\ov\sH^{\Sig}_{\ell}$ to be the image of the ring homomorphism
\begin{equation*}
	\sH^{|\Sig|}_{G}\ot\Ql\to \End_{\Ql}(V)\times\Ql\left [\Pic_{X}^{\Sigha}(k) \right]^{\io_{\Pic}}
\end{equation*}
given by the product of the action on $V$ and $\wt a^{\Sig}_{\Eis}$.
Using finiteness properties of the Hecke action on the perverse cohomology sheaves we obtain finiteness properties of the Hecke action on $V$.
\begin{proposition}[{\cite[Corollary 3.38, Corollary 3.40]{Yun2019}}]
	\thlabel{prop:finiteness-coh}
	\begin{enumerate}
		\item 
		If $f\in \cI_{\Eis}$, then the image of the Hecke action $f: V\to V$ is finite-dimensional. 
		\item $\ov\sH^{\Sig}_{\ell}$ is a finitely generated $\Ql$-algebra of Krull dimension one.
		\item $V$ is finitely generated as a $\ov\sH^{\Sig}_{\ell}$-module.
	\end{enumerate}
\end{proposition}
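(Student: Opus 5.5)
The plan is to follow the structure of \cite[§3.5]{Yun2019}, using the cohomological constant term $\gamma$ as the central tool. The deeper level enters only through the horocycle decomposition and through the modified Eisenstein map $\wt a^{\Sig}_{\Eis}$.

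For (1), fix $f \in \wt\cI^{\Sig}_{\Eis}$ and work with the perverse cohomology sheaves ${}^p\coho{i}{K}$ of the ind-object $\varinjlim \bR\pi_{\le\un\k,!}\Ql[2r](r)$ on $(X^r\times\frSi)_{\kbar}$. By the intertwining property (the analogue of \cite[Lemma 3.36]{Yun2019}), $\gamma$ carries the Hecke action of $f$ to the mc-action of $\wt a^{\Sig}_{\Eis}(f)$ on $\bigoplus_{\un\k\in\frK_r^\sh}P_{\un\k}$. Since $f$ lies in the kernel of $\wt a^{\Sig}_{\Eis}$, this action vanishes modulo constructibles.

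Because $\gamma$ is an isomorphism modulo constructibles (by \cite[Lemma 3.34]{Yun2019}), $f$ acts on ${}^p\coho{i}{K}$ through a map whose image is constructible, i.e.\ of finite type. The same argument applies in every perverse degree $i$, not only $i=0$; for $i\ne 0$ one argues as in loc.~cit., using that ${}^{\fl}\Sht$ is of finite type (Lemma \ref{l:ft}) together with Corollary \ref{c:Pk perv}. Passing to hypercohomology via the perverse spectral sequence, which is bounded since the relevant degrees are bounded, shows that $f(V)$ is finite-dimensional.

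For (2) and (3), the plan is to show that $V$ is finitely generated over $\sH_x$ for a single fixed $x\notin|\Sig|$. Choose $f\in\wt\cI^{\Sig}_{\Eis}$. The quotient $V/\wt\cI^{\Sig}_{\Eis}V$ is controlled by the constant-term part: through $\gamma$ it is a subquotient of a module over $\Ql[\Pic^{\Sigha}_X(k)]^{\iota}$, which is finitely generated over $\sH_x$ by part (1) of the lemma on the modified Eisenstein ideal. On the other hand, the images $fV$ are finite-dimensional by (1). Combining these two facts, $V$ is generated over $\sH_x$ by finitely many elements, namely lifts of generators of the Eisenstein quotient together with finitely many images $fV$ for generators $f$ of the ideal, which is Noetherian. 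This proves (3).

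For (2), $\ov\sH^{\Sig}_\ell$ embeds into $\End_{\sH_x}(V)\times\Ql[\Pic^{\Sigha}_X(k)]^{\iota}$, and both factors are finite over $\sH_x\cong\Ql[h_x]$. Hence $\ov\sH^{\Sig}_\ell$ is finitely generated of Krull dimension at most one. Equality holds because the Eisenstein factor is a surjective image by part (2) of that lemma, and it has dimension one, being a union of copies of $\Gm$ modulo the involution.

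The main obstacle is the intertwining statement for $\gamma$ at deeper level. This requires checking that the $\Pic_X(k)$-mc-action on the horocycle pieces $P_{\un\k}$, which are now indexed by $D_i\in\bigoplus_x(n_x\Z/2n_x\Z)[x]$, really factors through $\Pic^{\Sigha}_X(k)$ compatibly with $\wt a^{\Sig}_{\Eis}$. I would verify this by computing the action of $h_y$ on the strata ${}^{\un\k}\Sht$ using Lemma \ref{l:kappa mod} and Corollary \ref{c:kBunSig}, which identify each stratum with an unramified stratum ${}^{N}\Bun_G$. On these strata the level data at deeper points is rigid, so only the line-subbundle twists, which carry their trivializations, remain.
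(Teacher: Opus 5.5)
The overall route is the one the paper takes by deferring to \cite[\S 3.5]{Yun2019}: restrict to ${}^{\sh}\Sht$, use the constant term $\gamma$ as an mc-isomorphism intertwining the Hecke action with the Eisenstein map, and deduce (2) from finite generation of $V$ over a single $\sH_x$ via $\ov\sH^{\Sig}_\ell\subset\End_{\sH_x}(V)\times\Ql[\Pic^{\Sigha}_X(k)]^{\iota}$. Part (1), granted the intertwining, is fine, and so is the step from finite generation to (2). The argument for (3), however, does not work.

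First, $\wt\cI^{\Sig}_{\Eis}$ is not finitely generated. $\sH^{|\Sig|}$ is a polynomial ring in the infinitely many $h_y$, and finitely many generators involve only finitely many $h_y$. The quotient by such an ideal cannot be finite over a single $\sH_x$, whereas $\sH^{|\Sig|}/\wt\cI^{\Sig}_{\Eis}\cong\Q[\Pic^{\Sigha}_X(k)]^{\iota}$ is. So the ``finitely many images $fV$ for generators $f$'' do not exist.

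Second, and more importantly, a module over $\Ql[\Pic^{\Sigha}_X(k)]^{\iota}$ is not finitely generated over $\sH_x$ just because the algebra is finite over $\sH_x$. The missing input is that the constant-term side $\bigoplus_{\un\k\in\frK^{\sh}_r}P_{\un\k}$ is finitely generated, in the mc-sense, under the $\Pic$-action. This is the real geometric content of loc.~cit.:
\begin{itemize}
\item the types $\un\k\in\frK^{\sh}_r$ fall into finitely many orbits under the translations defining the mc-action;
\item the finitely many types near the cut-off contribute only constructible error;
\item each $P_{\un\k}$ is a shifted local system of finite rank on $X(\un\k)$.
\end{itemize}
With this you need no generators of the ideal. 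The restriction $V\to\cohoc{2r}{{}^{\sh}\Sht,\Ql}(r)$ has finite-dimensional kernel and cokernel because ${}^{\fl}\Sht$ is of finite type. So transporting finite generation back to $V$ costs only finite-dimensional pieces, and $V$ is finitely generated over the Noetherian ring $\sH_x$.

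Separately, your resolution of what you call the main obstacle does not hold. The level data at points with $n_x\ge 2$ is not rigid on unstable strata, and \thref{lem:unstable-divisor}(1),(3) and Corollary~\ref{c:kBunSig} fail as stated there. Here is a counterexample.
\begin{itemize}
\item Take $\Sig=2[x]$ with $x$ rational, and $\cF=\cL\oplus\cM$ with $N=\deg\cL-\deg\cM\gg0$.
\item Take the level structure $\cF_x\to\cO_x/\pf_x^2$ sending a generator $e_1$ of $\cL_x$ to $\varpi$ and a generator $e_2$ of $\cM_x$ to $1$.
\item Then $\cF(-\tfrac{2x}{2})$ is spanned locally by $e_1-\varpi e_2$ and $\varpi^2e_2$, and its maximal line subbundle is $\cL(-x)$.
\item One finds $\inst(\cF(-\tfrac{cx}{2}))=N,N+1,N,N+1$ for $c=0,1,2,3$, so the minimum is attained at both $D=0$ and $D=2[x]$.
\item The level structure with $e_1\mapsto 0$ gives the values $N,N+1,N+2,N+1$ instead.
\end{itemize}
These are two distinct points of $\Bun_G(\Sig)$ over the same $\cF$, both minimal at $D=0$.

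In general, a $\Gamma_0(\pf_x^{n_x})$-structure has a relative position $v\in\{0,\dots,n_x\}$ with respect to the canonical line subbundle. For $0<v<n_x$ it also carries residual data in $(\cO_x/\pf_x^{\min(v,n_x-v)})^\times$ modulo constants. This is exactly how $\cO^\times_{\Sigha}$, and hence $\wt a^{\Sig}_{\Eis}$ rather than $a_{\Eis}$, enters the constant term.

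A consistency check: if the level data were rigid, the constant term would only see $\Pic_X(k)$, and your argument would prove (1) for the unmodified ideal $\cI^{|\Sig|}_{\Eis}$. That fails once $\Pic^{\Sigha}_X(k)\to\Pic_X(k)$ has nontrivial kernel, already for $r=0$, because of pseudo-Eisenstein series attached to ramified characters of conductor dividing $\Sigha$.

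So the obstacle you isolated is genuine. The horocycle stratification, and with it the finite-generation input above, must be refined by these relative positions; citing Corollary~\ref{c:kBunSig} does not settle it.
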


\subsection{Cohomological spectral decomposition} 
\label{ss:coho spec decomp}
We explain how to generalize the cohomological spectral decomposition to deeper level. 
As before, the arguments of \cite[§3.5]{Yun2019} carry over verbatim using the results of the previous subsections.

\begin{theorem}[Cohomological spectral decomposition, {cf. \cite[Theorem 3.41]{Yun2019}}]
	\thlabel{th:spec decomp}
	\begin{enumerate}
		\item There is a decomposition of the reduced scheme of $\Spec \ov\sH^{\Sig}_{\ell}$ into a disjoint union
		\begin{equation*}
			\Spec \left(\ov\sH^{\Sig}_{\ell}\right)^{\red}=Z_{\Eis,\Ql}\coprod Z^{\Sig}_{0,\ell}
		\end{equation*}
		where $Z_{\Eis,\Ql}=\Spec \Ql\left [\Pic_{X}^{\Sigha}(k) \right]^{\io_{\Pic}}$ and $Z^{\Sig}_{0,\ell}$ consists of a finite set of closed points. 
		\item There is a unique decomposition
		\begin{equation*}
			V=V_{0}\oplus V_{\Eis}
		\end{equation*}
		into $\sH^{|\Sig|}_{G}\ot\Ql$-submodules, such that $\Supp(V_{\Eis}) \subset Z_{\Eis,\Ql}$, and $\Supp(V_{0}) = Z^{\Sig}_{0,\ell}$.
		\item The subspace $V_{0}$ is finite dimensional over $\Ql$.
	\end{enumerate}
\end{theorem}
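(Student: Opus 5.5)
The plan is to follow the proof of \cite[Theorem 3.41]{Yun2019} step by step. The only new input is \thref{prop:finiteness-coh}, and the modified Eisenstein ideal $\wt\Ic^{\Sig}_{\Eis}$ takes the place of $\Ic_{\Eis}$.

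\textbf{Step 1: decomposing $\Spec \ov\sH^{\Sig}_{\ell}$.}
By definition, $\ov\sH^{\Sig}_{\ell}$ embeds into $\End(V)\times \Ql[\Pic_X^{\Sigha}(k)]^{\io_{\Pic}}$. Consider the projection $\ov\sH^{\Sig}_{\ell}\to \Ql[\Pic_X^{\Sigha}(k)]^{\io_{\Pic}}$. It is surjective, because $\wt a^{\Sig}_{\Eis}$ is surjective by the Lemma on the modified Eisenstein ideal. Hence it defines a closed subscheme $Z_{\Eis,\Ql}\subset \Spec \ov\sH^{\Sig}_{\ell}$.

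Let $\ov I$ be the image of $\wt\Ic^{\Sig}_{\Eis}\otimes\Ql$ in $\ov\sH^{\Sig}_{\ell}$. Any element of $\ov I$ projects to zero in the second factor. So $\ov I$ acts faithfully on $V$, and by \thref{prop:finiteness-coh}(1) each element of $\ov I$ has finite-dimensional image. Since $\ov\sH^{\Sig}_{\ell}$ is Noetherian, $\ov I$ is finitely generated. Therefore $\ov I\cdot V$ is a finite-dimensional $\ov\sH^{\Sig}_{\ell}$-submodule, and $\ov I$ acts faithfully on it.

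It follows that $\ov I$ is a module over the finite-dimensional algebra $A=\ov\sH^{\Sig}_{\ell}/\mathrm{Ann}(\ov I V)$. Consequently $\Spec\ov\sH^{\Sig}_{\ell}$, set-theoretically, is $V(\ov I)\cup \Supp(\ov I)$, where $V(\ov I)=Z_{\Eis,\Ql}$ and $\Supp(\ov I)$ is a finite set of closed points. Call this finite set $Z^{\Sig}_{0,\ell}$ after removing the points that lie on $Z_{\Eis,\Ql}$.

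\textbf{Step 2: disjointness, the main obstacle.}
We must show that the union is disjoint: no isolated point can also lie on the Eisenstein locus, and no Eisenstein component can meet the finite part in an embedded way. Following \emph{loc.~cit.}, I would argue with an Artin--Rees type argument. Pick $f\in\ov I$ that is nonzerodivisor-like on the Eisenstein components. Its image is finite-dimensional, so $V$ localized at a point of $Z_{\Eis}$ splits off the finite part.

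The cleaner route is Krull dimension one together with \thref{prop:finiteness-coh}(2),(3). Since $Z_{\Eis,\Ql}$ is a disjoint union of one-dimensional components (it is a quotient of $\Gm$'s by an involution) and is reduced, the reduced scheme decomposes into one-dimensional components plus isolated points. Any isolated point meeting $Z_{\Eis}$ would have to lie on it, since a point meeting a one-dimensional component is not isolated. Points lying on $Z_{\Eis}$ are absorbed into $Z_{\Eis}$, and the remaining isolated points form $Z^{\Sig}_{0,\ell}$, which is then automatically clopen.

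\textbf{Step 3: the decomposition of $V$.}
Given the clopen decomposition of the reduced spectrum, idempotents lift to $\ov\sH^{\Sig}_{\ell}$, which is Noetherian. These idempotents give $V=V_0\oplus V_{\Eis}$ with the stated supports, and the decomposition is unique since it is cut out by idempotents.

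\textbf{Step 4: finiteness of $V_0$.}
$V_0$ is finitely generated over $\ov\sH^{\Sig}_{\ell}$ by \thref{prop:finiteness-coh}(3), and it is supported at finitely many closed points. It is therefore a module of finite length, hence finite-dimensional over $\Ql$.
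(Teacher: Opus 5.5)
Your proposal is correct. It is essentially the argument the paper relies on, namely the proof of \cite[Theorem 3.41]{Yun2019}, run with $\wt\Ic^{\Sig}_{\Eis}$ and \thref{prop:finiteness-coh} as the only new inputs.

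Two small remarks. First, your assertion that $\ov I$ acts faithfully on $\ov I V$ is neither justified nor needed: what you actually use, $\mathrm{Ann}(\ov I V)\cdot \ov I=0$, follows at once from $\ov\sH^{\Sig}_{\ell}\subset \End_{\Ql}(V)\times \Ql[\Pic_X^{\Sigha}(k)]^{\io_{\Pic}}$. Second, Step 2 is not a real obstacle, because $Z_{\Eis,\Ql}$ and the finitely many closed points off it are disjoint closed sets covering the spectrum, hence both clopen.
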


The analogous statements also hold for the base change $V' \defined \cohoc{2r}{\Sht',\Ql}(r)$, and also for $V'(\xi)$. 
In particular, this proves \thref{thm:spectral-dec}.
We can apply the previous theorem in particular to the case $r =  0$. 
In this situation, we can identify the stack $\Sht_G^0(\Sig)$ with the discrete groupoid 
\[
	\Bun_G(k) \cong G(F)\backslash G(\A) / K_0(\Sig),
\]
where $K_0(\Sig) = \prod_{x \not \in \Sig} G(\Oc_x) \times \prod_{x \in \Sig} \Gamma_0(\pf_x^{n_x})$ as above.
Let $\Ac(K_0(\Sig))$ be the space of $\Q$-valued automorphic forms with level $K_0(\Sig)$, i.e. compactly supported ($\Q$-valued) functions on the double coset $G(F)\backslash G(\A) / K_0(\Sig)$. 
We obtain an isomorphism
\[
	\Ac(K_0(\Sig)) \otimes_\Q \Q_\ell \cong H^0_c(\Sht^0_G(\Sig), \Q_\ell).
\]
In this case, \thref{th:spec decomp} yields the decomposition
\[
	\Ac(K_0(\Sig)) \otimes_\Q \Qlbar = \Ac(K_0(\Sig))_{\mr{Eis}} \oplus \bigoplus_{\pi \in \Pi_\Sig(\Qlbar)} \Ac(K_0(\Sig))_\pi,
\]
where $\Pi_\Sig(\Qlbar)$ is the set of cuspidal automorphic representations of level $K_0(\Sig)$. 
By strong multiplicity one, we may identify $\Pi_\Sig(\Qlbar)$ with a subset of $\Spec(\sH_G^{\Sig})$ via the character $\l_\pi$ associated to $\pi \in \Pi_\Sig(\Qlbar)$.

\subsection{Proof of the main theorem}
%
We follow \cite[Section 9]{Yun2017} and \cite[Section 7]{Yun2019} to finish the proof of the main theorem.
Let 
\[
	\wt \sH^\Sig_\ell \defined \Im\left( \sH^{\Sig}_G \otimes_Q \Ql \longrightarrow \End_{\Ql}(V'(\xi)) \oplus \End_{\Ql}(\Ac(K_0(\Sig)))_{\Ql} \oplus \Ql[\Pic_{X}^{\Sigha}(k)]\right).
\]
As for  $\ov\sH^{\Sig}_{\ell}$ we also get that $\wt \sH^\Sig_\ell$ is a finitely generated $\Ql$-algebra of Krull dimension 1. 
By \thref{th:spec decomp} we get a decomposition
\begin{equation}
	\label{eq:dec-hecke-tilde}
	\Spec\left(\wt \sH^\Sig_\ell\right)^{\mathrm{red}} = Z_{\mr{Eis}} \amalg \Yc_0,
\end{equation}
where $\Yc_0$ is a finite set of closed points.

\subsubsection{The key identity} We comment on how to generalize the key identity \cite[(1.9), Theorem 9.2]{Yun2017} and \cite[Theorem 7.5]{Yun2019} to our setting. 
As noted before, the study of the two Hitchin-type fibrations in \cite{Yun2019} does not use in an essential way that the level is square-free, so the results carry over to our setting, using the appropriate modifications in their definitions as introduced above. 

\begin{lemma}[{cf. proof of \cite[Theorem 9.2]{Yun2017}}]
	\thlabel{lem:key-identity-finiteness}
	Let $\II$ and $\JJ$ be two linear functionals on $\wt \sH_G^{\Sig}$ and assume that $\II$ and $\JJ$ agree on the images of $h_D$ for all $D \se X - \Sig$  with $\deg D \geq n_0$ for some positive integer $n_0$. 
	Then $\II = \JJ$. 
\end{lemma}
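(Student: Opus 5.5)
We follow the argument in the proof of \cite[Theorem 9.2]{Yun2017}, replacing the Eisenstein ideal by the modified ideal $\wt\cI^{\Sig}_{\Eis}$ and $\Pic_X(k)$ by $\Pic^{\Sigha}_X(k)$. Set $\Delta \defined \II - \JJ$, a linear functional on $\wt\sH^\Sig_\ell$ which vanishes on the images of $h_D$ for all $D \se X - \Sig$ with $\deg D \ge n_0$. We want to show $\Delta = 0$. The strategy is to show that the $\Ql$-span of these $h_D$ is all of $\wt\sH^\Sig_\ell$, at least after using the structure of $\wt\sH^\Sig_\ell$ coming from the decomposition \eqref{eq:dec-hecke-tilde}.

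\textbf{Step 1: an ideal of finite codimension.} Fix a point $x \in |X| - |\Sig|$ of degree $d_x$. Let $\cI_{\ge n_0}\se \wt\sH^\Sig_\ell$ be the span of the images of $h_D$ with $\deg D \ge n_0$. The multiplication rule $h_x h_D = h_{D+x} + (\text{lower terms})$, and more generally the Satake description of the $h_D$, shows that $\cI_{\ge n_0}$ is an ideal. It contains $h_{mx}$ for $m d_x \ge n_0$. Hence it contains all products $f \cdot h_{mx}$, and in particular $h_x^{m}\wt\sH^\Sig_\ell$ up to lower-order terms. One then argues by induction, as in \emph{loc.~cit.}, that $\cI_{\ge n_0}\supseteq h_x^{m}\,\wt\sH^\Sig_\ell$ for some large $m$.

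\textbf{Step 2: finite-dimensionality of the quotient.} It remains to show that $\wt\sH^\Sig_\ell/h_x^m$ is finite dimensional, and that $\Delta$ also vanishes there. By \eqref{eq:dec-hecke-tilde}, $\wt\sH^\Sig_\ell$ is finite over $\sH_x$. This uses \thref{prop:finiteness-coh} for the $V'(\xi)$-component, finiteness of $\Ac(K_0(\Sig))$ modulo the Eisenstein part, and part (1) of the finiteness lemma for $\Ql[\Pic_X^{\Sigha}(k)]^{\iota}$ as an $\sH_x$-module via $\wt a^{\Sig}_{\Eis}$. So $\wt\sH^\Sig_\ell/h_x^m$ is finite dimensional. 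Moreover, on $Z_{\Eis}$ the element $h_x$ maps under $\wt a^\Sig_{\Eis}$ to $\mathbf 1_{x}+q^{d_x}\mathbf 1_{x^{-1}}$. This element is invertible up to a nilpotent-free quotient only along a finite set of points, so $h_x$ cuts $Z_{\Eis}$ in a finite set, and $\Yc_0$ is already finite.

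Finally we choose two points $x, y\notin|\Sig|$ of coprime degrees. The ideals $h_x^m\wt\sH^\Sig_\ell$ and $h_y^m\wt\sH^\Sig_\ell$ then have no common zeros on $\Spec\wt\sH^\Sig_\ell$: a character of $\Pic^{\Sigha}_X(k)$, respectively a cuspidal eigensystem, cannot kill both. This is checked on the $\Pic^{\Sigha}_X(k)$-component using that $\Pic^{\Sigha,0}_X(k)$ is finite and degrees generate $\Z$, and on $\Yc_0$ we may enlarge the choice of $x,y$ to avoid the finitely many bad points. Therefore $h_x^m\wt\sH + h_y^m\wt\sH = \wt\sH^\Sig_\ell$, both are contained in $\cI_{\ge n_0}$, and so $\Delta$ vanishes identically.

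\textbf{Main obstacle.} The main obstacle is the Eisenstein component. There one must check that the modified map $\wt a^\Sig_{\Eis}$ to $\Ql[\Pic^{\Sigha}_X(k)]$ still has the property that the $h_x$ generate the unit ideal together. This relies on the surjectivity and finiteness statements of the lemma on $\wt a^\Sig_{\Eis}$, which replaces \cite[Lemma 4.2]{Yun2017}.
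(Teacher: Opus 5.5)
Your proposal has two genuine gaps. First, Step 1 does not work. The multiplication rule does not make the span $\mathcal{I}_{\ge n_0}$ of the $a(h_D)$ with $\deg D\ge n_0$ an ideal, and this fails already in $\sH^{|\Sig|}$. The reason is that $h_x h_D=h_{D+x}+q_x h_{D-x}$ whenever $x$ occurs in $D$, and $\deg(D-x)$ can drop below $n_0$. In particular $h_x^2=h_{2x}+q_x$, so for even $m$ the element $h_x^m$ has a nonzero $h_0$-coefficient. The ``lower-order terms'' you set aside are exactly what prevents $h_x^m\,\wt\sH^\Sig_\ell\subseteq\mathcal{I}_{\ge n_0}$, and no induction removes them.

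The missing idea, used in the paper, is finite generation. Choose a finite set $S$ of places such that the $a(h_s)$, $s\in S$, generate $\wt\sH^\Sig_\ell$ as a $\Ql$-algebra, and such that $S$ contains all places of degree $\le n_0$. Then every element of $\wt\sH^\Sig_\ell$ is a linear combination of $a(h_D)$ with $D$ supported on $S$. For $y\notin S\cup|\Sig|$ one therefore has $a(h_y)a(h_D)=a(h_{y+D})$ with no lower terms. Hence the ideal $I$ generated by these $a(h_y)$ really does lie in $\mathcal{I}_{\ge n_0}$.

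Second, the comaximality claim in Step 2 is false on the Eisenstein component; two places do not suffice in general. If $d_x$ and $d_y$ are both odd, take the character $\chi(c)=(\sqrt{-q})^{\deg c}$ of $\Pic^{\Sigha}_X(k)$. It is a point of $Z_{\mathrm{Eis}}$, and $\chi(x)+q^{d_x}\chi(x)^{-1}=\chi(x)\bigl(1+(-1)^{d_x}\bigr)=0$, and likewise for $y$. So $h_x$ and $h_y$ vanish simultaneously there, and finiteness of $\Pic^{\Sigha,0}_X(k)$ does not help.

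The paper avoids this by using all $y\notin S\cup|\Sig|$ together with Chebotarev density, which shows that $\Spec(\wt\sH^\Sig_\ell/I)$ misses $Z_{\mathrm{Eis}}$. That yields only $\dim_{\Ql}\wt\sH^\Sig_\ell/I<\infty$. The paper then needs \cite[Lemma 9.1]{Yun2017} to show that the $h_D$ of large degree span this finite-dimensional quotient. Your outline has no counterpart of that step.

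Your comaximality idea can in fact be repaired with a single place and no spectral input. For $y$ as above, both $a(h_y)\wt\sH^\Sig_\ell$ and $a(h_{2y})\wt\sH^\Sig_\ell$ lie in $\mathcal{I}_{\ge n_0}$, by the no-lower-terms argument. Since $h_y^2=h_{2y}+q_y$, every $g$ satisfies $q_y\,g=a(h_y)\bigl(a(h_y)g\bigr)-a(h_{2y})\,g\in\mathcal{I}_{\ge n_0}$, which gives the claim.
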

\begin{proof}
	We follow the corresponding part of the proof of \cite[Theorem 9.2]{Yun2017}.
	Let $\sH^\dagger \se \sH^{\Sig}_G$ be the subspace spanned by the $h_D$ for $D$ as in the assertion. 
	It suffices to show that the map $a \colon \sH^\dagger \otimes_\Q \Ql \to \wt \sH^\Sig_\ell$ is surjective.
	As $\wt \sH_\ell^\Sig$ is finitely generated as $\Ql$-algebra, there is a finite set of places $S \se X - \Sig$ such that $\wt \sH^\Sig_\ell$ is generated by $a(h_s)$ for $s\in S$. 
	We may assume that $S$ contains all places $s$ of degree at most $n_0$. 
	As in \emph{loc.~cit.} we see that the image of $\sH^\dagger$ in $\wt \sH^\Sig_\ell$ contains the ideal $I$ generated by $a(h_y)$ for all $y \in |X| - (S \cup |\Sig|)$. 
	
	As a next step we show that the quotient $\wt \sH^\Sig_\ell/I$ is a finite dimensional $\Ql$-vector space. 
	Using \eqref{eq:dec-hecke-tilde} together with \thref{th:spec decomp} it suffices to show that $\Spec(\wt \sH^\Sig_\ell/I)$ is disjoint from $Z_{\mr{Eis}}$.
	Assume there is a $\Qlbar$-point $\sigma \in \Spec(\wt \sH^\Sig_\ell/I) \cap Z_{\mr{Eis}}$. 
	Then the composition $\sH^{\Sig}_G \to \wt \sH^\Sig_\ell/I \xrightarrow{\sigma} \Qlbar$ factors through a character $\chi \colon \Q[\Pic^{\Sigha}_X(k)] \to \Qlbar$.
	After renormalizing $\chi$ by $\chi' \defined q^{-\deg/2}\chi$ we get a contradiction with Chebotarev density.
	
	We conclude as in  \cite[Theorem 9.2]{Yun2017} by applying \cite[Lemma 9.1]{Yun2017}.
\end{proof}
\begin{theorem}[Key identity]
	\thlabel{key}
For every $f \in \sH^{|\Sig|}$
\begin{equation}
	\II^\mu(f) = (-\log q)^{-r} \JJ^r(f).
	\label{eq:key-id}
\end{equation}	
\end{theorem}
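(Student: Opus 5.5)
The proof will follow the strategy of \cite[Theorem 9.2]{Yun2017} and \cite[Theorem 7.5]{Yun2019}. We first establish \eqref{eq:key-id} for $f = h_D$ with $D$ an effective divisor on $U = X - (\Sig \cup R)$ of sufficiently large degree, and then extend to all of $\sH^{|\Sig|}$ by \thref{lem:key-identity-finiteness}.

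For the first step we compare the two trace formulas. By \thref{th:Ir}, for $\deg D \geq \max\{g''+N, 2g\}$, the left-hand side $\II^\mu(h_D)$ equals a sum over $a \in \cA^\flat_D(k)$ of traces of $(f^\flat_{d,!}[\ov\cH^\Diamond])^r \circ \Fr_a$ on the stalks of $\bR f^\flat_{d,!}\Ql$. On the analytic side we write $\JJ(h_D^\Sig, s) = \sum_u \JJ_u(h_D^\Sig, s)$ as in \eqref{eq:orbital-integral-decomposition}. By the geometrization of orbital integrals (the theorem after \thref{prop:geometry-Nd}), each $\JJ_u$ is a $q^{(\cdot)s}$-weighted sum over $\un d \in Q_d$ of traces of Frobenius on $(\bR g^\flat_{\un d,!}L_{\un d})_{\bar a}$. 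This identification is valid for $u$ regular semisimple and, once $d \geq 4g-3+\rho+N$, also for $u \in \{0,\infty\}$. Applying $(\partial/\partial s)^r$ at $s=0$ (after multiplying by $q^{Ns}$) turns the $q^{(\cdot)s}$ weights into $(\log q)^r$ times polynomial weights in the degrees $d_{ij}$. We then invoke the comparison of the two Hitchin-type fibrations from \cite[§6–§8]{Yun2019}. There, $\bR f^\flat_{d,!}\Ql$ with its Hecke operator $f^\flat_{d,!}[\ov\cH^\Diamond]$ is identified, over $\cA^\Diamond_d$ and then everywhere by smallness (\thref{prop:geometry-Md}(3)) and the decomposition theorem, with $\bigoplus_{\un d}\bR g_{\un d,!}L_{\un d}$ twisted by the weights $(2d_{11}-d-\r)$. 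The action of $\ov\cH$ on $\cM_d$ corresponds to multiplication by these weights, and the $r$-th power matches the $r$-th derivative. The only inputs that differ from the Iwahori case are the level conditions at $\Sig$: $\ph_{21}|_\Sig = 0$ with $\ph_{11},\ph_{22}$ invertible on the $\cN$ side, and $\alpha|_{\nu^{-1}\Sig}$ invertible on the $\cM$ side. These are formally identical for non-reduced $\Sig$, since they only involve the divisor $\Sig$ and not its multiplicities. Matching point-by-point over $a \in \cA^\flat_D(k)$ then gives $\II^\mu(h_D) = (-\log q)^{-r}\JJ^r(h_D)$ for all $D$ with $\deg D \geq n_0 := \max\{4g-3+\rho+N, g''+N, 2g\}$.

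For the second step we check that both functionals factor through $\wt\sH^\Sig_\ell$. The functional $\II^\mu$ factors through the action on $V'(\xi)$, by \thref{prop:Hecke-action-Chow-coh} and the Lemma comparing $\II^\mu$ with intersection over $\xi$. For $\JJ^r$ we use \thref{prop:spectral-dec-J} and \eqref{eq:spectral-dec-J}. On $f \in \wt\cI^\Sig_{\Eis}$ the distribution is a finite sum of cuspidal terms $\lambda_\pi(f)\cdot(\text{const})$, each factoring through $\Ac(K_0(\Sig))$. The Eisenstein contribution on general $f$ factors through $\wt a^\Sig_{\Eis}$ into $\Ql[\Pic^{\Sigha}_X(k)]$, because only characters of conductor at most $\Sigha$ contribute. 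Hence $\JJ^r$ factors through the image defining $\wt\sH^\Sig_\ell$. \thref{lem:key-identity-finiteness} then upgrades the identity on $h_D$ with $\deg D \geq n_0$ to all $f$.

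The main obstacle is the factorization of the full $\JJ^r$, not just its cuspidal part, through $\wt\sH^\Sig_\ell$. This requires redoing the Eisenstein computation of \cite[Theorem 2.2]{Yun2019} with the modified Eisenstein ideal, and showing that the regularized Eisenstein part depends on $f$ only via $\wt a^\Sig_{\Eis}(f)$. I would first try to write $\JJ_{\Eis}(f,s)$ explicitly as an integral over characters $\chi$ of $\Pic^{\Sigha}_X(k)$ weighted by the Satake-type value of $f$ at $\chi$.
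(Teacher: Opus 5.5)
Your proposal is correct and follows the paper's proof. You establish the identity for $h_D$ with $\deg D$ large by comparing the trace expression of \thref{th:Ir} with the geometrized orbital integrals through the two Hitchin-type fibrations as in \cite[\S 7]{Yun2019}, then extend to all $f$ via \thref{lem:key-identity-finiteness} once both functionals factor through $\wt\sH^\Sig_\ell$. The step you single out as the main obstacle is not actually needed: any $f$ in the kernel of $\sH^{|\Sig|}\otimes\Ql\to\wt\sH^\Sig_\ell$ already has $\wt a^\Sig_{\Eis}(f)=0$, so \thref{prop:spectral-dec-J} reduces $\JJ(f^\Sig,s)$ to finitely many cuspidal terms $\lambda_\pi(f)\cdot c_\pi(s)$, which vanish because $f$ acts by zero on $\Ac(K_0(\Sig))$; no analysis of the Eisenstein part for general $f$ is required, which is why the paper calls this factorization clear.
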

\begin{proof}
	Both $\II^{\mu}(f)$ and $\JJ^{\mu}(f)$ clearly depend only on the image of $h$ in $\wt \sH^{\Sig}_{\ell}$.
	We first consider the case $f = h_D$ for $D$ and effective divisor $D \in \Div(X - \Sig)$ with $\deg D \geq \max\{2g- 2 \}$. 
	In this case have identified both as certain traces of Frobenius. 
	In both cases we can interpret the resulting sheaves in terms of representations of a certain finite group as in \cite[§7.1]{Yun2019}.
	This can then be used to compute and compare the traces as in \cite[Theorem 7.4]{Yun2019}, which shows the key identity for all $f = h_D$ as above.
	The full statement can then be deduced from \thref{lem:key-identity-finiteness}.
\end{proof}

\subsubsection{Proof \thref{thm:int-period}}
Our main theorem now follows as in the proof of \cite[Theorem 1.2]{Yun2019}. We sketch the argument.
Let $\cY=\Spec \wt \sH^{\Sig}_{\ell}$. 
Under the decomposition \eqref{eq:dec-hecke-tilde}, the quotient $\wt \sH^{\Sig}_{\ell}$ of the Hecke algebra decomposes as 
\begin{equation}\label{prod decomp H}
	\wt \sH^{\Sig}_{\ell}=\wt \sH^{\Sig}_{\ell,\Eis}\times \wt \sH^{\Sig}_{\ell,0}
\end{equation}
such that $\Spec \wt \sH^{\Sig,\red}_{\ell,\Eis}=Z_{\Eis, \Ql}$ and $\Spec \wt \sH^{\Sig,\red}_{\ell,0}=\cY_{0}$. We have a decomposition
\begin{equation*}
	V'(\xi)\ot\Qlbar=V'(\xi)_{\Eis}\ot\Qlbar\op(\oplus_{\fkm\in \cY_{0}(\Qlbar)}V'(\xi)_{\fkm})
\end{equation*}
where $\Supp(V'(\xi)_{\Eis})\subset Z_{\Eis,\Ql}$ and $V'(\xi)_{\fkm}$ is the generalized eigenspace of $V'(\xi)\ot\Qlbar$ under the character $\fkm$ of $\wt\sH^{\Sig}_{\ell}$. Under this decomposition, let $Z^{\mu}_{\fkm}(\xi)$ be the projection of $Z^{\mu}(\xi)\in V'(\xi)$ to  $V'(\xi)_{\fkm}$.

Let $h\in \wt \sH^{\Sig}_{\ell,0}$, viewed as $(0,h)\in \wt \sH^{\Sig}_{\ell}$ under the decomposition \eqref{prod decomp H}. Since the $\sH^{|\Sig|}_{G}$-action on $V'(\xi)$ is self-adjoint with respect to the cup product pairing, we have the spectral decomposition of the intersection form
\begin{equation}\label{Ih}
	\II^{\mu}(h)=\sum_{\fkm\in \cY_{0}(\Qlbar)}(Z^{\mu}_{\fkm}(\xi), h*Z^{\mu'}_{\fkm}(\xi)).
\end{equation}

Let $\pi$ be as in the statement of \thref{thm:int-period} and let $e_{\pi}$ be the idempotent in $\wt \sH^{\Sig}_{\ell,0}\ot\Qlbar$ corresponding to $\pi\in \Pi_{\Sig}(\Qlbar)\subset \cY_{0}(\Qlbar)$. 
If we plug $h=e_{\pi}$ into \eqref{Ih} and the spectral decomposition of $\JJ$ of \thref{prop:spectral-dec-J} together with \eqref{eq:spectral-dec-J}, \thref{thm:int-period} follows from \thref{key} for $e_\pi$.

\bibliographystyle{alphaurl}
\bibliography{../literature}

\end{document}